\newtheorem{theorem}{Theorem}[section]
\newtheorem{proposition}{Proposition}[section]
\newtheorem{lemma}{Lemma}[section]
\newtheorem{remark}{Remark}[section]
\newcommand{\T}{\ensuremath{ \mathbb R^{n_1\times \cdots\times n_d}   }}
\newcommand{\bigxiaokuohao}[1]{\ensuremath{ \left(  #1 \right) }}      
\newcommand{\bigjueduizhi}[1]{\ensuremath{ \left|  #1 \right| }}
\newcommand{\bigfnorm}[1]{\ensuremath{ \left\|   #1 \right\|_F }}    
\newcommand{\bignorm}[1]{\ensuremath{ \left\|   #1 \right\|  }}        
\newcommand{\normalnorm}[1]{\ensuremath{  \|   #1  \|  }}     
\newcommand{\bigzeronorm}[1]{\ensuremath{ \left\|   #1 \right\|_0  }}
\newcommand{\innerprod}[2]{\ensuremath{ \left\langle   #1 , #2\right\rangle }}
\newcommand{\truncatedvector}[2]{\ensuremath{     \left[#1 \right]^{\downarrow,#2}  }}
	\definecolor{darkgray}{rgb}{0.66, 0.66, 0.66}
\newenvironment{mytabular}{\bgroup\tiny\tabular}{\endtabular\egroup}
\newenvironment{mytabular1}{\bgroup\footnotesize\tabular}{\endtabular\egroup}
\title{Several Approximate  Algorithms for Sparse Best Rank-$1$ Approximation to Higher-Order Tensors}
\author{Xianpeng Mao$^*$ \and Yuning Yang\thanks{College of Mathematics and Information Science, Guangxi University, Nanning, 530004, China.} \thanks{Corresponding author. Email:  yyang@gxu.edu.cn.}}
\begin{document} 
\maketitle


\begin{abstract}

	Sparse tensor best rank-1 approximation (BR1Approx), which is a sparsity generalization of the dense tensor BR1Approx, and is a higher-order extension of the sparse matrix BR1Approx, is one of the most important problems in sparse tensor decomposition and related problems arising from statistics and machine learning. By exploiting the multilinearity as well as the sparsity structure of 	the  problem, four polynomial-time approximation algorithms are proposed, which are easily implemented, of low computational complexity, and can serve as initial procedures for iterative algorithms. In addition, theoretically guaranteed 
	approximation lower  bounds are derived for all the algorithms.  We provide numerical experiments on synthetic and real data to illustrate the efficiency and effectiveness of the  proposed algorithms; in particular, serving as initialization procedures, the approximation algorithms can help in improving the solution quality of iterative algorithms while reducing the computational time. 
	
	\noindent {\bf Key words: } tensor; sparse; rank-1 approximation; approximation algorithm; approximation bound\\
	\hspace{2mm}\vspace{3mm}
	
\end{abstract}

\thispagestyle{plain} \markboth{X. Mao \and Y. Yang}{Approximation Algorithms for Sparse Tensor BR1Approx}

\section{Introduction}

In the big data era, people often face intrinsically multi-dimensional, multi-modal and multi-view
data-sets that are too complex to be processed and analyzed by traditional   data mining tools based on vectors or matrices. Higher-order tensors (hypermatrices)  naturally can represent such complex data sets. Tensor decomposition tools, developed for understanding tensor-based data sets, have shown the power in various fields such as signal processing, image processing,   statistics, and machine learning; see the surveys \cite{kolda2010tensor,comon2014tensors,comon2009tensor,cichocki2015tensor,sidiropoulos2017tensor,cichocki2014era}.

In high dimensional data-sets, another structure that cannot be ignored is the sparsity.    Sparsity tensor examples come from     clustering problems, online advertising, web link analysis, ranking of authors based on citations   \cite{sun2017provable,sun2019dynamic,papalexakis2012k,kolda2005higher,ng2011multirank}, and so on.    
Therefore, recent advances incorporate   sparsity into tensor decomposition models and tensor-based statistics and machine learning  problems \cite{chi2012tensors,papalexakis2012k,sun2017provable,allen2012sparse,madrid2017tensor,sun2017store,zhang2019optimal,wang2020sparse}; just to name a few.  %
As pointed out by Allen \cite{allen2012sparse}, introducing sparsity into tensor problems is desirable for feature selection, for   compression, for statistical consistency, and for better visualization and interpretation of the data analysis results. 

In (dense) tensor decomposition and related tensor models and problems, the best rank-1 approximation   (BR1Approx) is one of the most important and fundamental problems \cite{de2000on,qi2018tensor}. Just as the dense counterpart, the sparse tensor BR1Approx     serves as a keystone in the computation of sparse tensor decomposition and related sparse tensor models  \cite{sun2017provable,sun2017store,allen2012sparse,wang2020sparse,papalexakis2012k}. Roughly speaking, the sparse tensor BR1Approx is to find a projection of a given data tensor onto the set of sparse rank-1 tensors in the sense of Euclidean distance.
This is equivalent to maximizing a multilinear function over both unit sphere and $\ell_0$ constraints; mathematical models will be detailed in Section \ref{sec:model}. 
Such a problem    also closely connects to the sparse tensor spectral norm defined in \cite{sun2017provable,sun2017store}. 

For (dense) tensor BR1Approx, several methods have been proposed; e.g.,  power methods \cite{kolda2011shifted,de2000on}, approximation algorithms  \cite{he2010approximation,zhang2012cubic,he2012probability}, and convex relaxations \cite{nie2014semidefinite,jiang2015tensor,yang2016rank}. For sparse matrix BR1Approx, solution methods have been studied extensively in the context of sparse PCA and sparse SVD \cite{zou2006sparse,witten2009a}; see, e.g.,  iterative   methods \cite{luss2013conditional,witten2009a},  approximation algorithms \cite{chan2016approximability,d2014approximation}, and semidefinite relaxation \cite{aspremont2007a}; just to name a few. For sparse tensor BR1Approx, in the context of sparse tensor decomposition, Allen \cite{allen2012sparse} first studied models and iterative algorithms based on $\ell_1$ regularization in the literature, whereas   Sun et al. \cite{sun2017provable} developed $\ell_0$-based models and algorithms, and analyzed their statistical performance.   Wang et al. \cite{wang2020sparse}    considered another $\ell_1$ regularized model and algorithm, which is different from \cite{allen2012sparse}.  In the study of co-clustering, Papalexakis et al. \cite{papalexakis2012k} proposed   alternating minimization methods for nonnegative sparse tensor BR1Approx. 

For  nonconvex and NP-hard problems, approximation algorithms are nevertheless encouraged. However, in the context of sparse tensor BR1Approx problems, little attention was paid to this type of algorithms. To fill this gap, 
by fully exploiting the multilinearity and sparsity of the model, 
we   develop  four polynomial-time approximation algorithms, some extending their matrix or dense tensor counterparts; in particular, the last algorithm, which is the most efficient one,  is even new when reducing to the matrix or dense tensor cases. 
The proposed algorithms are easily implemented, and the  computational complexity is not high: the most expensive execution is, if necessary, to only   compute the largest singular vector pairs of certain matrices. Therefore, the algorithms are able to  serve as initialization procedures for iterative algorithms. Moreover, for each algorithm, we derive   theoretically   guaranteed 
approximation lower bounds. Experiments on synthetic as well as real data show the usefulness of the introduced algorithms.

The rest of this work is organized as follows. Sect. \ref{sec:model} introduces the sparse tensor BR1Approx model. Approximation algorithms are presented in Sect. \ref{sec:approx_alg}. Numerical results are illustrated in Sect. \ref{sec:numer_experiments}. Sect. \ref{sec:conclusions} draws some conclusions.

\section{Sparse Tensor Best Rank-1 Approximation} \label{sec:model}
Throughout this work, vectors are written as  $(\mathbf x,\mathbf y,\ldots)$, matrices
correspond to  $(A,B,\ldots)$, and tensors are
written as  $(\mathcal{A}, \mathcal{B},
\cdots)$. $\mathbb R^{n_1\times \cdots\times n_d}$ denotes the space of $n_1\times\cdots\times n_d$ real tensors. 
For two tensors $\mathcal A,\mathcal B$ of the same size, their inner product $\langle \mathcal A,\mathcal B\rangle$ is given by
the sum of entry-wise product. 
The Frobenius  norm of $\mathcal A$ is defined by $\|\mathcal A\|_F = \langle\mathcal A,\mathcal A\rangle^{1/2}$.
$\circ$ denotes the outer product; in particular, for $\mathbf x_j\in\mathbb R^{n_j}$, $j=1,\ldots,d$, $\mathbf x_1\circ\cdots\circ\mathbf x_d$ denotes a rank-1 tensor in $\T$.  $\bigzeronorm{\mathbf x}$ represents the number of  nonzero entries of a vector $\mathbf x$.

Given  $\mathcal A \in\T$ with $d\geq 3$, the tensor BR1Approx consists of finding a set of vectors $\mathbf x_1,\ldots,\mathbf x_d$, such that
\begin{equation}
	\label{prob:str1approx_dense}
	\min_{\lambda\in\mathbb R,\mathbf x_j\in\mathbb R^{n_j},1\leq j\leq d}~\bigfnorm{\lambda\cdot\mathbf x_1\circ\cdots\circ \mathbf x_d- \mathcal A}^2~{\rm s.t.} \bignorm{\mathbf x_j} = 1,~1\leq j\leq d,
\end{equation}
When $\mathcal A$ is sparse, it may be necessary to also investigate the sparsity of the latent factors $\mathbf x_j$, $1\leq j\leq d$. Assume that the true sparsity level of each latent factors is known a prior, or can be estimated; then the sparse tensor BR1Approx can be modeled as follows \cite{sun2017provable}:
\begin{equation}
	\label{prob:str1approx_org}
	\min_{\lambda\in\mathbb R, \mathbf x_j\in\mathbb R^{n_j},1\leq j\leq d}~\bigfnorm{\lambda\cdot\mathbf x_1\circ\cdots\circ \mathbf x_d- \mathcal A}^2~{\rm s.t.} \bignorm{\mathbf x_j}=1,~\bigzeronorm{\mathbf x_j}\leq r_j,1\leq j\leq d,
\end{equation}
where   $1\leq r_j\leq n_j$ are positive integers standing for the sparsity level.  Allen \cite{allen2012sparse} and Wang et al. \cite{wang2020sparse} proposed $\ell_1$ regularized models for sparse tensor BR1Approx problems.

Since $\mathbf x_j$'s are normalized in \eqref{prob:str1approx_org}, we have 
\begin{eqnarray*}
	\bigfnorm{\lambda\mathbf x_1\circ\cdots\circ \mathbf x_d - \mathcal A}^2 &=& \bigfnorm{\mathcal A}^2 - 2\lambda\innerprod{\mathcal A}{\mathbf x_1\circ\cdots\circ\mathbf x_d} + \lambda^2\prod^d_{j=1}\nolimits\bignorm{\mathbf x_j}^2\\
	& =& \bigfnorm{\mathcal A}^2 - 2\lambda\innerprod{\mathcal A}{\mathbf x_1\circ\cdots\circ\mathbf x_d} + \lambda^2,
\end{eqnarray*}  
minimizing which with respect to $\lambda$ gives $\lambda = \innerprod{\mathcal A}{\mathbf x_1\circ\cdots\circ\mathbf x_d}$, and so
\[
\bigfnorm{\lambda\mathbf x_1\circ\cdots\mathbf x_d - \mathcal A}^2 =\bigfnorm{\mathcal A}^2 - \innerprod{\mathcal A}{\mathbf x_1\circ\cdots\circ\mathbf x_d}^2.
\]
\color{black}Due to the multilinearity of $\innerprod{\mathcal A}{\mathbf x_1\circ\cdots\circ\mathbf x_d}$, $\innerprod{\mathcal A}{\mathbf x_1\circ\cdots\circ\mathbf x_d}^2$ is maximized if and only if $\innerprod{\mathcal A}{\mathbf x_1\circ\cdots\circ\mathbf x_d}$ is maximized. \color{black}
Thus \eqref{prob:str1approx_org} can be equivalently recast as
\begin{equation}
	\label{prob:str1approx_org_max}
	\boxed{
		\max_{\mathbf x_j,1\leq j\leq d}\nolimits~ \innerprod{\mathcal A}{\mathbf x_1\circ\cdots\circ\mathbf x_d}~{\rm s.t.}~ \bignorm{\mathbf x_j}=1, \bigzeronorm{\mathbf x_j}\leq r_j, 1\leq j\leq d.}
\end{equation}
This is the main model to be focused on. When $r_j=n_j$,  \eqref{prob:str1approx_org_max} boils down exactly to the tensor singular value problem \cite{lim2005singular}. Thus \eqref{prob:str1approx_org} can be regarded as a sparse tensor singular value problem.

When $r_j=n_j$, \eqref{prob:str1approx_org_max} is already NP-hard in general \cite{hillar2013most}; on the other hand, when $d=2$ and $\mathbf x_1=\mathbf x_2$, it is also NP-hard \cite{magdon2017np}.  Therefore, we may deduce that \eqref{prob:str1approx_org_max} is also NP-hard, whose NP-hardness comes from two folds: the multilinearity of the objective function, and the sparsity constraints. In view of this, approximation algorithms for solving \eqref{prob:str1approx_org_max} are   necessary.

In the rest of this work, to simplify notations, we denote
\[
\mathcal A\mathbf x_1\cdots\mathbf x_d:=\innerprod{\mathcal A}{\mathbf x_1\circ\cdots\circ \mathbf x_d}.
\]
In addition, we also use the following notation to denote the partial gradients of $\mathcal A\mathbf x_1 \cdots  \mathbf x_d$ with respect to $\mathbf x_j$:
\begin{equation*}
	\mathcal A\mathbf x_1\cdots\mathbf x_{j-1}\mathbf x_{j+1}\cdots\mathbf x_d ~=~ \nabla_{\mathbf x_j}\mathcal A\mathbf x_1\cdots\mathbf x_d \in\mathbb R^{n_j},~1\leq j\leq d.
\end{equation*}
For example, $(\mathcal A\mathbf x_2\cdots\mathbf x_d)_{i_1} = \sum^{n_2,\ldots,n_d}_{i_2=1,\ldots,i_d=1}\mathcal A_{i_1i_2\cdots i_d}\mathbf x_{2,i_2}\cdots\mathbf x_{d,i_d}$, where we write $\mathbf x_j := [x_{j,1},\ldots,x_{j,n_j}]^\top$.  The partial Hessian of $\mathcal A\mathbf x_1\cdots\mathbf x_d$ with respect to $\mathbf x_{d-1}$ and $\mathbf x_d$ is denoted as:
\[
\mathcal A\mathbf x_1\cdots\mathbf x_{d-2} = \nabla_{\mathbf x_{d-1},\mathbf x_d}\mathcal A\mathbf x_1\cdots \mathbf x_d \in\mathbb R^{n_{d-1}\times n_d},
\]
with $(\mathcal A\mathbf x_1\cdots\mathbf x_{d-2})_{i_{d-1},i_d} = \sum^{n_1,\ldots,n_{d-2}}_{i_1=1,\ldots,i_{d-2}=1}\mathcal A_{i_1\cdots i_{d-1}i_d} \mathbf x_{1,i_1}\cdots \mathbf x_{d-2,i_{d-2}}$.


%
%

\section{Approximation Algorithms and  Approximation Bounds}\label{sec:approx_alg}
Four approximation algorithms are proposed in this section, all of which admit theoretical 
lower bounds.   We first present some preparations used in this section.  For any nonzero   $\mathbf a\in\mathbb R^n$, $1\leq r\leq n$, denote $ \truncatedvector{\mathbf a}{r}\in\mathbb R^n$ as a truncation of $\mathbf a$  as
\[
\truncatedvector{\mathbf a}{r}_i=
\left\{ \begin{array}{lr}
	a_i, & {\rm if}~|  a_i|{\rm ~ is ~one~of~the~} r~{\rm largest~(in~magnitude)~entries~of~} \mathbf a, \\
	0, & {\rm otherwise}.
\end{array} \right. 
\]
In particular, if $  a_{i_1}$, $  a_{i_2}$, $  a_{i_3},\ldots$ are respectively the $r$-, $(r+1)$-, $(r+2)$-, $\ldots$ largest   entries (in magnitude) with $i_1<i_2<i_3<\cdots$, and $|  a_{i_1}|=|  a_{i_2}| = |a_{i_3}| = \cdots$, then we set $\truncatedvector{\mathbf a}{r}_{i_1}=  a_{i_1}$ and $\truncatedvector{\mathbf a}{r}_{i_2} = \truncatedvector{\mathbf a}{r}_{i_3}=\cdots=0$. Thus $\truncatedvector{\mathbf a}{r}$ is uniquely defined.  We can see that $\truncatedvector{\mathbf a}{r}$ is a best $r$-approximation to $\mathbf a$ \cite[Proposition 4.3]{luss2013conditional}, i.e.,
\begin{equation}
	\label{eq:truncation_equivalent}
	\truncatedvector{\mathbf a}{r} \in \arg\min_{\bigzeronorm{\mathbf x}\leq r}\bignorm{\mathbf x-\mathbf a} ~~\Leftrightarrow~~\frac{\truncatedvector{\mathbf a}{r}}{\normalnorm{\truncatedvector{\mathbf a}{r} }} \in\arg\max_{\bignorm{\mathbf x}=1,\bigzeronorm{\mathbf x}\leq r   } \mathbf a^\top\mathbf x.
\end{equation}
It is not hard to see that the following proposition holds.
\begin{proposition}
	\label{prop:lower_bound_truncation_times_org}
	Let $\mathbf a\in\mathbb R^n$, $\mathbf a\neq 0$ and let $\mathbf a^0 = \truncatedvector{\mathbf a}{r}/\normalnorm{\truncatedvector{\mathbf a}{r}}$ with $1\leq r\leq n$. Then 
	\[
	\innerprod{\mathbf a}{\mathbf a^0} \geq \sqrt{\frac{r}{n}}\bignorm{\mathbf a}.  
	\]
\end{proposition}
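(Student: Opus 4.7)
The plan is to reduce the inequality to a simple averaging argument on the squared entries of $\mathbf a$. First I would observe that since $\mathbf a^0$ is a unit vector supported exactly on the index set $S$ of the $r$ largest-magnitude entries of $\mathbf a$, the inner product collapses to
\[
\innerprod{\mathbf a}{\mathbf a^0} = \frac{\innerprod{\mathbf a}{\truncatedvector{\mathbf a}{r}}}{\normalnorm{\truncatedvector{\mathbf a}{r}}} = \frac{\sum_{i\in S} a_i^2}{\normalnorm{\truncatedvector{\mathbf a}{r}}} = \normalnorm{\truncatedvector{\mathbf a}{r}},
\]
because on $S$ the entries of $\truncatedvector{\mathbf a}{r}$ coincide with those of $\mathbf a$, while off $S$ the truncation is zero. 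So the task is reduced to showing $\normalnorm{\truncatedvector{\mathbf a}{r}} \geq \sqrt{r/n}\,\bignorm{\mathbf a}$.

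Next I would square both sides and exploit the defining property of the truncation: the entries $\{a_i^2\}_{i\in S}$ are the $r$ largest among the $n$ squared entries $\{a_i^2\}_{i=1}^n$. Hence their average is at least the overall average,
\[
\frac{1}{r}\sum_{i\in S} a_i^2 \;\geq\; \frac{1}{n}\sum_{i=1}^n a_i^2,
\]
which rearranges to $\normalnorm{\truncatedvector{\mathbf a}{r}}^2 \geq (r/n)\bignorm{\mathbf a}^2$. Taking square roots and combining with the identity in the first step yields the claim.

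There is essentially no obstacle here; the only mild subtlety is the tie-breaking convention in the definition of $\truncatedvector{\mathbf a}{r}$, but it is harmless because the averaging argument only uses that $S$ consists of indices realizing the $r$ largest values of $a_i^2$, and any valid choice of $S$ works.
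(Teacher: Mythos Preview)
Your argument is correct. The paper itself does not give a proof of this proposition---it merely remarks that ``it is not hard to see that the following proposition holds''---so your averaging argument on the squared entries is precisely the routine verification the authors leave implicit, and there is nothing to compare.
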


Let $\lambda_{\max}(\cdot)$ denote the largest singular value of a given matrix. The following lemma is important.
\begin{lemma}
	\label{lem:lower_bound_lem:1}
	Given a nonzero $A\in\mathbb R^{m\times n}$, with $(\mathbf y,\mathbf z)$ being the normalized singular vector pair corresponding to $\lambda_{\max}(A)$. Let $\mathbf z^0 = \truncatedvector{\mathbf z}{r}/\normalnorm{\truncatedvector{\mathbf z}{r}}$ with $1\leq r\leq n$. Then there holds
	\[\small
	\bignorm{A\mathbf z^0} \geq \sqrt{\frac{r}{n}}\lambda_{\max}(A).
	\]
\end{lemma}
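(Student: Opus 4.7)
The plan is to use the leading left singular vector $\mathbf{y}$ as a test vector against $A\mathbf{z}^0$, converting the norm bound into a correlation estimate $\langle \mathbf{z}, \mathbf{z}^0\rangle$, which can then be controlled by Proposition \ref{prop:lower_bound_truncation_times_org}.

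First I would record the defining relation for the singular vector pair: since $\|\mathbf{y}\|=\|\mathbf{z}\|=1$ correspond to $\lambda_{\max}(A)$, we have $A\mathbf{z} = \lambda_{\max}(A)\,\mathbf{y}$, and equivalently $\mathbf{y}^\top A = \lambda_{\max}(A)\,\mathbf{z}^\top$. Then by Cauchy--Schwarz (using $\|\mathbf{y}\|=1$),
\[
\bignorm{A\mathbf{z}^0} \;\geq\; \mathbf{y}^\top A \mathbf{z}^0 \;=\; \lambda_{\max}(A)\,\langle \mathbf{z}, \mathbf{z}^0\rangle.
\]
Taking absolute values if needed (we may replace $\mathbf{y}$ by $-\mathbf{y}$ to ensure the inner product is nonnegative), it suffices to establish the clean inequality $\langle \mathbf{z}, \mathbf{z}^0\rangle \geq \sqrt{r/n}$.

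Next I would exploit the structure of the truncation. Because $\mathbf{z}^0 = \truncatedvector{\mathbf{z}}{r}/\normalnorm{\truncatedvector{\mathbf{z}}{r}}$ and $\truncatedvector{\mathbf{z}}{r}$ agrees with $\mathbf{z}$ on its support, a direct computation yields
\[
\langle \mathbf{z}, \mathbf{z}^0\rangle \;=\; \frac{\langle \mathbf{z}, \truncatedvector{\mathbf{z}}{r}\rangle}{\normalnorm{\truncatedvector{\mathbf{z}}{r}}} \;=\; \frac{\normalnorm{\truncatedvector{\mathbf{z}}{r}}^2}{\normalnorm{\truncatedvector{\mathbf{z}}{r}}} \;=\; \normalnorm{\truncatedvector{\mathbf{z}}{r}}.
\]
Finally, Proposition \ref{prop:lower_bound_truncation_times_org} applied to $\mathbf{a}=\mathbf{z}$ gives $\langle \mathbf{z}, \mathbf{z}^0\rangle \geq \sqrt{r/n}\,\bignorm{\mathbf{z}} = \sqrt{r/n}$, and chaining this with the displayed inequality above completes the proof.

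There is no real obstacle here; the only conceptual step is choosing the right test vector, namely the left singular vector $\mathbf{y}$, which turns the bound on $\|A\mathbf{z}^0\|$ into an inner product involving $\mathbf{z}$, for which the truncation estimate already established in Proposition \ref{prop:lower_bound_truncation_times_org} directly applies. All other manipulations are one-line identities.
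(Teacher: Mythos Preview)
Your proof is correct and follows essentially the same route as the paper's: both use the singular vector relations to reduce $\|A\mathbf z^0\|$ to the correlation $\langle \mathbf z,\mathbf z^0\rangle$ and then invoke Proposition~\ref{prop:lower_bound_truncation_times_org}. Your version is in fact slightly more streamlined, applying Cauchy--Schwarz directly with the test vector $\mathbf y$ rather than first multiplying through by $\lambda_{\max}(A)$ and using $\|A\mathbf z\|=\lambda_{\max}(A)$; the remark about possibly replacing $\mathbf y$ by $-\mathbf y$ is harmless but unnecessary, since $\langle \mathbf z,\mathbf z^0\rangle=\normalnorm{\truncatedvector{\mathbf z}{r}}\geq 0$ automatically.
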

\begin{proof}
	From the definition of $\mathbf z$, we see that $\bignorm{A \mathbf z} = \lambda_{\max}(A)$ and $A^\top A\mathbf z = \lambda_{\max}^2(A)\mathbf z$. Therefore,
	\begin{eqnarray*}
		\lambda_{\max}(A)	\bignorm{A\mathbf z^0} &=& \bignorm{A\mathbf z^0}\cdot \bignorm{A {\mathbf z}}\nonumber\\
		&\geq &  \innerprod{A\mathbf z^0}{A {\mathbf z}}\nonumber\\
		&=&  \lambda_{\max}(A) \cdot \innerprod{A\mathbf z^0}{ {\mathbf y}}= \lambda^2_{\max}(A)\cdot \innerprod{\mathbf z^0}{ {\mathbf z}}\nonumber\\
		&\geq & \sqrt{\frac{r }{n }}\lambda^2_{\max}(A),  
	\end{eqnarray*}
	where the   last inequality follows from Proposition \ref{prop:lower_bound_truncation_times_org},  the definition of $\mathbf z^0  $,  and that $\bignorm{ {\mathbf z}}=1$. This completes the proof. 
\end{proof}
\subsection{The first algorithm}\label{sec:approx_alg1}

To illustrate the first algorithm, we denote $\mathbf e_j^{i_j}\in\mathbb R^{n_j}$, $1\leq i_j\leq n_j$, $j=1,\ldots,d$,  as   standard basis vectors in $\mathbb R^{n_j}$. For example, $\mathbf e_2^1$ is a vector  in $\mathbb R^{n_2}$ with the first entry being one and the remaining ones being zero. Denote $\boldsymbol{r}:=(r_1,\ldots,r_d)$; without loss of generality, we assume that $r_1\leq\cdots\leq r_d$. 

\begin{boxedminipage}{0.92\textwidth}\small
	\begin{equation}  \label{proc:init1_order_d}
		\noindent {\rm Algorithm}~ (\mathbf x^0_1,\ldots,\mathbf x^0_d) =  {\rm approx\_alg}(\mathcal A,\boldsymbol{r})
		\tag{A} 
	\end{equation}
	
	1. For each $i_j=1,\ldots,n_j$, $j=1,\ldots,d-1$, compute $\truncatedvector{\mathcal A\mathbf e_1^{i_1}\cdots\mathbf e_{d-1}^{i_{d-1}}}{r_{d}}\in\mathbb R^{n_d}$.
	
	2. Let $(\bar i_1,\ldots,\bar i_{d-1})$ be a tuple of indices such that $$\bignorm{ \truncatedvector{\mathcal A\mathbf e_1^{\bar i_1}\cdots\mathbf e_{d-1}^{\bar i_{d-1}}}{r_d}} = \max_{1\leq i_j\leq n_j,1\leq j\leq d-1} \bignorm{\truncatedvector{\mathcal A\mathbf e_1^{i_1}\cdots\mathbf e_{d-1}^{i_{d-1}}}{r_d}};$$ denote $\bar{\mathbf x}^0_d:= \truncatedvector{\mathcal A\mathbf e_1^{\bar i_1}\cdots\mathbf e_{d-1}^{\bar i_{d-1}}}{r_d}$ and $\mathbf x_d^0:= \bar{\mathbf x}^0_d/\normalnorm{\bar{\mathbf x}^0_d}$.
	
	3. Sequentially update 
	\begin{eqnarray*}
		\bar{\mathbf x}^0_{d-1}&=&\truncatedvector{\mathcal A\mathbf e_1^{\bar i_1}\cdots\mathbf e_{d-2}^{\bar i_{d-2}}\mathbf x^0_d  }{r_{d-1}},\mathbf x^0_{d-1} = \bar{\mathbf x}^0_{d-1}/\bignorm{\bar{\mathbf x}^0_{d-1}},\\
		\bar{\mathbf x}^0_{d-2} &=& \truncatedvector{\mathcal A\mathbf e_1^{\bar i_1}\cdots\mathbf e_{d-3}^{\bar i_{d-3}}  \mathbf x^0_{d-1}\mathbf x^0_d   }{r_{d-2}}, \mathbf x^0_{d-2} = \bar{\mathbf x}^0_{d-2}/\bignorm{\bar{\mathbf x}^0_{d-2}},\\
		& \vdots&\\
		\bar{\mathbf x}^0_{ 1} &=& \truncatedvector{\mathcal A\mathbf x^0_2\cdots  \mathbf x^0_d}{r_{ 1}},\mathbf x^0_{ 1} = \bar{\mathbf x}^0_{ 1}/\bignorm{\bar{\mathbf x}^0_{ 1}}.
	\end{eqnarray*}
	
	4. Return $(\mathbf x^0_1,\ldots,\mathbf x^0_d)$.
\end{boxedminipage}

It is clear that $\mathcal A\mathbf e_1^{i_1}\cdots\mathbf e_{d-1}^{i_{d-1}}$'s are      mode-$d$ fibers of $\mathcal A$. For the definition of fibers, one can refer to \cite{kolda2010tensor}; following the Matlab notation we have $\mathcal A(i_1,\ldots,i_{d-1},:) = \mathcal A\mathbf e_1^{i_1}\cdots\mathbf e_{d-1}^{i_{d-1}}$.  Algorithm \ref{proc:init1_order_d} is a straightforward extension of \cite[Algorithm 1]{chan2016approximability} for sparse symmetric matrix PCA  to higher-order tensor cases.   Intuitively, the first two steps of Algorithm \ref{proc:init1_order_d}  enumerate all the mode-$d$ fibers of $\mathcal A$, such that the select one admits the largest length with respect to  its  largest $r_d$ entries (in magnitude). $\mathbf x^0_d$ is then  given by the normalization of this fiber. Then, according to \eqref{eq:truncation_equivalent}, the remaining $\mathbf x^0_j$   are  in fact obtained by sequentially updated as
\begin{equation}\label{eq:als}
	\mathbf x^0_j \in\arg\max_{\bignorm{\mathbf y}=1,\bigzeronorm{\mathbf y}\leq r_j  } \mathcal A\mathbf e_1^{\bar i_1}\cdots \mathbf e_{j-1}^{\bar i_{j-1}}\mathbf y \mathbf x^{0}_{j+1}\cdots\mathbf x^0_d,~j=d-1,\ldots,1.
\end{equation}

We consider the computational complexity of Algorithm \ref{proc:init1_order_d}. Computing $\mathcal A\mathbf e_1^{i_1}\cdots\mathbf e_{d-1}^{i_{d-1}}$ takes $O(n_d)$ flops, while it takes $O(n_d\log(n_d))$ flops to compute $\truncatedvector{\mathcal A\mathbf e_1^{i_1}\cdots\mathbf e_{d-1}^{i_{d-1}}}{r}$       by using quick-sort. Thus   the first two steps take $O(n_1\cdots n_d\log(n_d) + n_d\log(n_d))$ flops.   
Computing $\mathbf x^0_j,j=1,\ldots,d-1$ respectively takes $O(\prod^j_{k=1}n_k \cdot n_d+n_j\log(n_j)  )$ flops. Thus the total complexity can be roughly estimated as $O(\prod^d_{j=1}n_j\cdot\log(n_d) + \sum^{d}_{j=1}n_j\log(n_j))$. 

We first show that Algorithm \ref{proc:init1_order_d} is well-defined.
\begin{proposition}
	\label{prop:welldefined1} If $\mathcal A\neq 0$ and $(\mathbf x^0_1,\ldots,\mathbf x^0_d)$ is generated by Algorithm \ref{proc:init1_order_d}, then $\mathbf x^0_j\neq 0$, $1\leq j\leq d$.
\end{proposition}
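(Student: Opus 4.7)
The plan is reverse induction on $j = d, d-1, \ldots, 1$: at each step I show the unnormalized vector $\bar{\mathbf x}^0_j$ produced in Algorithm \ref{proc:init1_order_d} is nonzero, from which $\mathbf x^0_j = \bar{\mathbf x}^0_j/\bignorm{\bar{\mathbf x}^0_j}$ is automatically a well-defined nonzero unit vector. To streamline notation, let $\mathbf v_j$ denote the vector that is truncated to produce $\bar{\mathbf x}^0_j$, so that $\bar{\mathbf x}^0_j = \truncatedvector{\mathbf v_j}{r_j}$; explicitly, $\mathbf v_d = \mathcal A\mathbf e_1^{\bar i_1}\cdots\mathbf e_{d-1}^{\bar i_{d-1}}$, and for $j<d$, $\mathbf v_j = \mathcal A\mathbf e_1^{\bar i_1}\cdots\mathbf e_{j-1}^{\bar i_{j-1}}\mathbf x^0_{j+1}\cdots\mathbf x^0_d$ (with no $\mathbf e$-factors when $j=1$).

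For the base case $j=d$, the assumption $\mathcal A \neq 0$ supplies indices with $\mathcal A_{i_1\cdots i_d}\neq 0$; then the mode-$d$ fiber $\mathcal A\mathbf e_1^{i_1}\cdots\mathbf e_{d-1}^{i_{d-1}}$ has a nonzero $i_d$-th component and, because $r_d\geq 1$, its $r_d$-truncation is nonzero. Hence the maximum taken in Step~2 is strictly positive, and the maximizing choice $(\bar i_1,\ldots,\bar i_{d-1})$ gives $\bar{\mathbf x}^0_d\neq 0$.

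For the inductive step, assume $\bar{\mathbf x}^0_j\neq 0$, so $\mathbf x^0_j$ is a nonzero unit vector. The key identity is obtained by reading off the $\bar i_{j-1}$-th entry of $\mathbf v_{j-1}$:
\[
(\mathbf v_{j-1})_{\bar i_{j-1}} \;=\; \mathcal A\mathbf e_1^{\bar i_1}\cdots\mathbf e_{j-1}^{\bar i_{j-1}}\mathbf x^0_j\mathbf x^0_{j+1}\cdots\mathbf x^0_d \;=\; \innerprod{\mathbf v_j}{\mathbf x^0_j}.
\]
Because $\bar{\mathbf x}^0_j$ coincides with $\mathbf v_j$ on its support, $\innerprod{\mathbf v_j}{\bar{\mathbf x}^0_j}=\bignorm{\bar{\mathbf x}^0_j}^2$, and dividing by $\bignorm{\bar{\mathbf x}^0_j}$ yields $\innerprod{\mathbf v_j}{\mathbf x^0_j}=\bignorm{\bar{\mathbf x}^0_j}>0$. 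Therefore $\mathbf v_{j-1}$ has at least one nonzero coordinate, and since $r_{j-1}\geq 1$ the truncation $\bar{\mathbf x}^0_{j-1}=\truncatedvector{\mathbf v_{j-1}}{r_{j-1}}$ must retain that coordinate (or a strictly larger one in magnitude), yielding $\bar{\mathbf x}^0_{j-1}\neq 0$. The same identity, applied with $j=2$, handles the terminal step that produces $\bar{\mathbf x}^0_1$, since $\mathbf v_1 = \mathcal A\mathbf x^0_2\cdots\mathbf x^0_d$ fits the general formula with no remaining $\mathbf e$-factors.

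The only real obstacle is controlling the nonsmooth truncation operator: \emph{a priori}, one might worry that hard thresholding could wipe out the coordinates inherited from the previous iterate. The identity $(\mathbf v_{j-1})_{\bar i_{j-1}} = \bignorm{\bar{\mathbf x}^0_j}$ resolves this by embedding the preceding norm as a specific, guaranteed-surviving coordinate of the next gradient vector, so the induction reduces to the elementary remark that $r_j\geq 1$.
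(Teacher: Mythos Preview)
Your argument is correct and follows essentially the same route as the paper: reverse induction on $j$, using the identity $(\mathbf v_{j-1})_{\bar i_{j-1}} = \innerprod{\mathbf v_j}{\mathbf x^0_j}$ to show the next gradient vector has a strictly positive coordinate. The only cosmetic difference is that you compute $\innerprod{\mathbf v_j}{\mathbf x^0_j}=\bignorm{\bar{\mathbf x}^0_j}$ directly at every step, whereas the paper, after the first step, routes the positivity through the optimality property \eqref{eq:als}; your version is slightly more uniform but the idea is the same.
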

\begin{proof}
	Since $\mathcal A$ is a nonzeros tensor, there exists at least one fiber that is not identically zero. Therefore, the definition of $\bar{\mathbf x}^0_d$ shows that $\bar{\mathbf x}^0_d$ is not identically zero, and hence $\mathbf x^0_d$. We also observe that $\innerprod{ \mathcal A\mathbf e_1^{\bar i_1}\cdots\mathbf e_{d-2}^{\bar i_{d-2}}\mathbf x^0_d   }{ \mathbf e^{\bar i_{d-1}}_{d-1}   }=\mathcal A\mathbf e_1^{\bar i_1}\cdots\mathbf e_{d-1}^{\bar i_{d-1}}\mathbf x^0_d = \normalnorm{  \mathcal A\mathbf e_1^{\bar i_1}\cdots\mathbf e_{d-1}^{\bar i_{d-1}}  } >0$, which implies that $\mathcal A\mathbf e_1^{\bar i_1}\cdots\mathbf e_{d-2}^{\bar i_{d-2}}\mathbf x^0_d  \neq 0$,  and so $\mathbf x^0_{d-1}\neq 0$.  \eqref{eq:als}   shows that  $\mathcal A\mathbf e^{\bar i_1}_1\cdots\mathbf e^{\bar i_{d-2}}_{d-2}\mathbf x^0_{d-1}\mathbf x^0_{d} \geq \mathcal A\mathbf e_1^{\bar i_1}\cdots\mathbf e_{d-1}^{\bar i_{d-1}}\mathbf x^0_d  >0$.    Similarly, we can show that $\mathbf x^0_{d-2}\neq 0,\ldots,\mathbf x^0_1\neq 0$.  
\end{proof}

We next first present the approximation bound when $d=3$.   The bound extends   that of \cite{chan2016approximability} to higher-order cases. 
\begin{theorem}
	\label{th:init_theory_bound}
	Denote $v^{\rm opt}$ as the optimal value of \eqref{prob:str1approx_org_max} when $d=3$.
	Let $(\mathbf x^0_1,\mathbf x^0_2,\mathbf x^0_3)$ be generated by Algorithm \ref{proc:init1_order_d}. Then it holds that
	\[\small
	\mathcal A\mathbf x^0_1\mathbf x^0_2\mathbf x^0_3\geq \frac{v^{\rm opt}}{\sqrt{r_1r_2}}.
	\]
\end{theorem}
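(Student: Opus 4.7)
The plan is to sandwich $\mathcal A\mathbf x^0_1\mathbf x^0_2\mathbf x^0_3$ from below by the quantity $\max_{i_1,i_2}\bignorm{\truncatedvector{\mathcal A\mathbf e_1^{i_1}\mathbf e_2^{i_2}}{r_3}}$ maximized over the fibers, and then to sandwich $v^{\rm opt}$ from above by the same quantity times $\sqrt{r_1 r_2}$. The lower bound on $\mathcal A\mathbf x^0_1\mathbf x^0_2\mathbf x^0_3$ is the easy side: by the ``argmax'' characterization in \eqref{eq:als}, I would write
\[
\mathcal A\mathbf x^0_1\mathbf x^0_2\mathbf x^0_3 \;\geq\; \mathcal A\mathbf e_1^{\bar i_1}\mathbf x^0_2\mathbf x^0_3 \;\geq\; \mathcal A\mathbf e_1^{\bar i_1}\mathbf e_2^{\bar i_2}\mathbf x^0_3,
\]
and then observe that, because $\mathbf x^0_3$ is the $\ell_2$-normalized top-$r_3$ truncation of the vector $\mathcal A\mathbf e_1^{\bar i_1}\mathbf e_2^{\bar i_2}$, the last inner product equals $\bignorm{\truncatedvector{\mathcal A\mathbf e_1^{\bar i_1}\mathbf e_2^{\bar i_2}}{r_3}}$, which by Step~2 of Algorithm~\ref{proc:init1_order_d} is $\max_{i_1,i_2}\bignorm{\truncatedvector{\mathcal A\mathbf e_1^{i_1}\mathbf e_2^{i_2}}{r_3}}$.

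For the upper bound on $v^{\rm opt}$, let $(\mathbf x^*_1,\mathbf x^*_2,\mathbf x^*_3)$ be an optimizer of \eqref{prob:str1approx_org_max}. I would expand
\[
v^{\rm opt}\;=\;\sum_{i_1,i_2} x^*_{1,i_1}\,x^*_{2,i_2}\,\bigl\langle \mathcal A\mathbf e_1^{i_1}\mathbf e_2^{i_2},\,\mathbf x^*_3\bigr\rangle.
\]
Since $\mathbf x^*_3$ is a unit vector with $\bigzeronorm{\mathbf x^*_3}\leq r_3$, the equivalence \eqref{eq:truncation_equivalent} gives
$\bigl|\langle \mathcal A\mathbf e_1^{i_1}\mathbf e_2^{i_2},\mathbf x^*_3\rangle\bigr|\leq \bignorm{\truncatedvector{\mathcal A\mathbf e_1^{i_1}\mathbf e_2^{i_2}}{r_3}}$ for every $(i_1,i_2)$. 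Pulling the maximum out yields
\[
v^{\rm opt}\;\leq\;\Bigl(\max_{i_1,i_2}\bignorm{\truncatedvector{\mathcal A\mathbf e_1^{i_1}\mathbf e_2^{i_2}}{r_3}}\Bigr)\,\bigonenorm{\mathbf x^*_1}\,\bigonenorm{\mathbf x^*_2}.
\]
Finally, Cauchy--Schwarz combined with $\bigzeronorm{\mathbf x^*_j}\leq r_j$ and $\bignorm{\mathbf x^*_j}=1$ gives $\bigonenorm{\mathbf x^*_j}\leq \sqrt{r_j}$ for $j=1,2$, so the two sandwich inequalities combine to $\mathcal A\mathbf x^0_1\mathbf x^0_2\mathbf x^0_3\geq v^{\rm opt}/\sqrt{r_1 r_2}$.

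There is no real obstacle: the proof is essentially a careful bookkeeping exercise once one recognizes that the same ``best fiber after truncation'' quantity appears on both sides. The one subtle point I would be careful about is the equality $\langle \mathcal A\mathbf e_1^{\bar i_1}\mathbf e_2^{\bar i_2},\mathbf x^0_3\rangle=\bignorm{\truncatedvector{\mathcal A\mathbf e_1^{\bar i_1}\mathbf e_2^{\bar i_2}}{r_3}}$, which relies on the truncation $\truncatedvector{\cdot}{r_3}$ being supported on the top-$r_3$ entries and $\mathbf x^0_3$ being its $\ell_2$-normalization; this is exactly the content of \eqref{eq:truncation_equivalent}. A secondary small care is that when using $\bigonenorm{\mathbf x^*_j}\leq \sqrt{r_j}$ one should work with $|x^*_{j,i_j}|$ inside the sums (the bound on $\langle\cdot,\mathbf x^*_3\rangle$ is in absolute value), which does not change the argument.
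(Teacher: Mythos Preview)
Your proof is correct and follows essentially the same approach as the paper's: both bound $v^{\rm opt}$ above by $\sqrt{r_1 r_2}\max_{i_1,i_2}\bignorm{\truncatedvector{\mathcal A\mathbf e_1^{i_1}\mathbf e_2^{i_2}}{r_3}}$ via the sparsity of $\mathbf x^*_1,\mathbf x^*_2$ and the optimality characterization \eqref{eq:truncation_equivalent}, and then identify this maximum with $\mathcal A\mathbf e_1^{\bar i_1}\mathbf e_2^{\bar i_2}\mathbf x^0_3\leq \mathcal A\mathbf x^0_1\mathbf x^0_2\mathbf x^0_3$ using \eqref{eq:als}. The only cosmetic difference is that the paper peels off the $\sqrt{r_1}$ and $\sqrt{r_2}$ factors sequentially via Cauchy--Schwarz on the support of each $\mathbf x^*_j$, whereas you expand over $(i_1,i_2)$ simultaneously and use the $\ell_1$--$\ell_\infty$ bound together with $\bigonenorm{\mathbf x^*_j}\leq\sqrt{r_j}$.
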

\begin{proof}
	Denote $(\mathbf x^*_1,\mathbf x^*_2,\mathbf x^*_3)$ as a maximizer of \eqref{prob:str1approx_org_max}. By noticing that $\bigzeronorm{\mathbf x^*_1}\leq r_1$ and $\mathcal A\mathbf x^*_1\mathbf x^*_2\mathbf x^*_3=\sum_{\{i_1: x^*_{1,i_1}\neq 0\}}^{n_1}  x^*_{1,i_1} \cdot \bigxiaokuohao{ \mathcal A\mathbf e_1^{i_1} \mathbf x^*_2\mathbf x^*_3  } $; recalling that we write $\mathbf x_j := [x_{j,1},\ldots,x_{j,n_j}]^\top$, we have
	\begin{eqnarray}
		v^{\rm opt} &=& \mathcal A\mathbf x^*_1\mathbf x^*_2\mathbf x^*_3= \sum^{n_1}_{\{i_1: x^*_{1,i_1}\neq 0  \}}\nolimits  x^*_{1,i_1}\bigxiaokuohao{\mathcal A\mathbf e_1^{i_1}\mathbf x^*_2\mathbf x^*_3}\nonumber\\
		&\leq& \sqrt{ \sum^{n_1}_{ \{i_1: x^*_{1,i_1}\neq 0  \}}\nolimits \bigxiaokuohao{  x^*_{1, i_1}}^2  }\sqrt{ \sum^{n_1}_{ \{i_1: x^*_{1,i_1}\neq 0  \}}\nolimits\bigxiaokuohao{ \mathcal A\mathbf e_1^{i_1}\mathbf x^*_2\mathbf x^*_3  }^2  }\nonumber\\
		&\leq& \bignorm{\mathbf x^*_1} \cdot \sqrt{r_1} \max_{i_1} \bigjueduizhi{\mathcal A\mathbf e_1^{i_1}\mathbf x^*_2\mathbf x^*_3 }=\sqrt{r_1} \max_{i_1} \bigjueduizhi{\mathcal A\mathbf e_1^{i_1}\mathbf x^*_2\mathbf x^*_3 },\label{eq:th:lower_bound:1}
	\end{eqnarray}
	where the first inequality uses the Cauchy-Schwartz inequality, and the last equality follows from $\bignorm{\mathbf x^*_1}=1$. In the same vein, we have
	\begin{eqnarray*}
		v^{\rm opt}  \leq  \sqrt{r_1}\max_{i_1} \bigjueduizhi{\mathcal A\mathbf e_1^{i_1}\mathbf x^*_2\mathbf x^*_3} 
		\leq  \sqrt{r_1r_2}\max_{i_1,i_2} \bigjueduizhi{\mathcal A\mathbf e_1^{i_1}\mathbf e_2^{i_2}\mathbf x^*_3}. \label{eq:th:lower_bound:2}
	\end{eqnarray*} 
	Assume that $\bigjueduizhi{\mathcal A\mathbf e_1^{\hat i_1}\mathbf e_2^{\hat i_2}\mathbf x^*_3} = \max_{i_1,i_2}\bigjueduizhi{\mathcal A\mathbf e_1^{i_1}\mathbf e_2^{i_2}\mathbf x^*_3}$; denote
	$$\hat{\mathbf x}_3 := \truncatedvector{\mathcal A\mathbf e_1^{\hat i_1}\mathbf e_2^{\hat i_2}}{r_3}\Big /\bignorm{\truncatedvector{\mathcal A\mathbf e_1^{\hat i_1}\mathbf e_2^{\hat i_2}}{r_3}}.$$ \eqref{eq:truncation_equivalent} shows that $\bigjueduizhi{\mathcal A\mathbf e_1^{\hat i_1}\mathbf e_2^{\hat i_2}\mathbf x^*_3 }\leq \bigjueduizhi{\mathcal A\mathbf e_1^{\hat i_1}\mathbf e_2^{\hat i_2}\hat{\mathbf x}_3}$.  	By noticing the  definition of $\mathbf x^0_3$, we then have 
	\begin{eqnarray*}
		\bigjueduizhi{\mathcal A\mathbf e_1^{\hat i_1}\mathbf e_2^{\hat i_2}\mathbf x^*_3 }&\leq& \bigjueduizhi{\mathcal A\mathbf e_1^{\hat i_1}\mathbf e_2^{\hat i_2}\hat{\mathbf x}_3}= \bignorm{\truncatedvector{\mathcal A\mathbf e_1^{\hat i_1}\mathbf e_2^{\hat i_2}}{r_3}}\leq \max_{i_1,i_2} \bignorm{ \truncatedvector{\mathcal A\mathbf e_1^{i_1}\mathbf e_2^{i_2}}{r_3}   } \\
		&=& \mathcal A\mathbf e_1^{\bar i_1}\mathbf e_2^{\bar i_2}\mathbf x^0_3.
	\end{eqnarray*}
	Finally, recalling the definitions of $\mathbf x^0_1$ and $\mathbf x^0_2$ and combining the above pieces, we arrive at
	\[
	v^{\rm opt} \leq \sqrt{r_1r_2}\mathcal A\mathbf e_1^{\bar i_1}\mathbf e_2^{\bar i_2}\mathbf x^0_3\leq \sqrt{r_1r_2}\mathcal A\mathbf e_1^{\bar i_1}\mathbf x^0_{2}\mathbf x^0_3\leq \sqrt{r_1r_2}\mathcal A\mathbf x^0_1\mathbf x^0_2\mathbf x^0_3,
	\]
	as desired.   
\end{proof}

In the same spirit of the proof of   Theorem \ref{th:init_theory_bound}, for general $d\geq 3$, one can show that
\begin{small}
	\begin{eqnarray*}
		v^{\rm opt} &\leq& \sqrt{r_1}\max_{i_1}\nolimits \bigjueduizhi{ \mathcal A\mathbf e_1^{i_1}\mathbf x^*_2\cdots\mathbf x^*_d } \leq \cdots\leq \sqrt{\prod^{d-1}_{j=1}\nolimits r_j}\max_{i_1,\ldots,i_{d-1}}\bigjueduizhi{ \mathcal A\mathbf e_1^{i_1}\cdots\mathbf e_{d-1}^{i_{d-1}}\mathbf x^*_d  }\\
		&\leq& \sqrt{\prod^{d-1}_{j=1}\nolimits r_j}\max_{i_1,\ldots,i_{d-1}} \mathcal A\mathbf x^0_1\cdots\mathbf x^0_d.
	\end{eqnarray*}
\end{small}
\begin{theorem}
	\label{th:init_theory_bound_1_general}
	Denote $v^{\rm opt}$ as the optimal value of the problem \eqref{prob:str1approx_org_max}. 
	Let $(\mathbf x^0_1,\ldots,\mathbf x^0_d)$ be generated by Algorithm \ref{proc:init1_order_d} for a general $d$-th order tensor $\mathcal A$. Then it holds that
	\[\small
	\mathcal A\mathbf x^0_1\cdots\mathbf x^0_d\geq \frac{v^{\rm opt}}{\sqrt{\prod^{d-1}_{j=1}r_j}}.
	\]
\end{theorem}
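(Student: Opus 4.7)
The plan is to follow the same template as the proof of Theorem \ref{th:init_theory_bound}, simply iterating the Cauchy--Schwarz step once for each of the first $d-1$ modes, then using the greedy choice in Step~2 of Algorithm \ref{proc:init1_order_d} to pull the free index tuple over, and finally using \eqref{eq:als} to upgrade the standard basis vectors $\mathbf e_j^{\bar i_j}$ into the iterates $\mathbf x^0_j$. The derivation already displayed just before the theorem statement gives the first half of this chain; I would simply fill in the steps and complete the chain down to $\mathcal A\mathbf x^0_1\cdots\mathbf x^0_d$.

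Concretely, let $(\mathbf x^*_1,\ldots,\mathbf x^*_d)$ be an optimizer of \eqref{prob:str1approx_org_max}. Writing
\[
\mathcal A\mathbf x^*_1\cdots\mathbf x^*_d \;=\; \sum_{\{i_1:x^*_{1,i_1}\neq 0\}}\nolimits x^*_{1,i_1}\bigxiaokuohao{\mathcal A\mathbf e_1^{i_1}\mathbf x^*_2\cdots\mathbf x^*_d},
\]
Cauchy--Schwarz together with $\bigzeronorm{\mathbf x^*_1}\leq r_1$ and $\bignorm{\mathbf x^*_1}=1$ yields the factor $\sqrt{r_1}$ in front of $\max_{i_1}\bigjueduizhi{\mathcal A\mathbf e_1^{i_1}\mathbf x^*_2\cdots\mathbf x^*_d}$, exactly as in \eqref{eq:th:lower_bound:1}. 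Iterating this argument $d-1$ times on the slices $\mathcal A\mathbf e_1^{i_1}\cdots\mathbf e_{j-1}^{i_{j-1}}\mathbf x^*_j\cdots\mathbf x^*_d$ produces the bound
\[
v^{\rm opt}\;\leq\;\sqrt{\prod^{d-1}_{j=1}\nolimits r_j}\;\max_{i_1,\ldots,i_{d-1}}\bigjueduizhi{\mathcal A\mathbf e_1^{i_1}\cdots\mathbf e_{d-1}^{i_{d-1}}\mathbf x^*_d},
\]
which is precisely the inequality displayed just above the theorem statement.

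To pass from this maximum to $\mathcal A\mathbf x^0_1\cdots\mathbf x^0_d$, I would let $(\hat i_1,\ldots,\hat i_{d-1})$ attain the max above and set $\hat{\mathbf x}_d:=\truncatedvector{\mathcal A\mathbf e_1^{\hat i_1}\cdots\mathbf e_{d-1}^{\hat i_{d-1}}}{r_d}/\bignorm{\truncatedvector{\mathcal A\mathbf e_1^{\hat i_1}\cdots\mathbf e_{d-1}^{\hat i_{d-1}}}{r_d}}$. Then \eqref{eq:truncation_equivalent} gives $\bigjueduizhi{\mathcal A\mathbf e_1^{\hat i_1}\cdots\mathbf e_{d-1}^{\hat i_{d-1}}\mathbf x^*_d}\leq \bignorm{\truncatedvector{\mathcal A\mathbf e_1^{\hat i_1}\cdots\mathbf e_{d-1}^{\hat i_{d-1}}}{r_d}}$, and the greedy choice in Step~2 of Algorithm \ref{proc:init1_order_d} further bounds this by $\bignorm{\truncatedvector{\mathcal A\mathbf e_1^{\bar i_1}\cdots\mathbf e_{d-1}^{\bar i_{d-1}}}{r_d}}=\mathcal A\mathbf e_1^{\bar i_1}\cdots\mathbf e_{d-1}^{\bar i_{d-1}}\mathbf x^0_d$.

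The remaining work is to replace the basis vectors $\mathbf e_j^{\bar i_j}$, $j=d-1,d-2,\ldots,1$, by the iterates $\mathbf x^0_j$ one at a time. For each such $j$, the vector $\pm\mathbf e_j^{\bar i_j}$ is unit-norm and $1$-sparse, hence feasible for the single-mode maximization \eqref{eq:als} that defines $\mathbf x^0_j$; so
\[
\mathcal A\mathbf e_1^{\bar i_1}\cdots\mathbf e_j^{\bar i_j}\mathbf x^0_{j+1}\cdots\mathbf x^0_d\;\leq\;\mathcal A\mathbf e_1^{\bar i_1}\cdots\mathbf e_{j-1}^{\bar i_{j-1}}\mathbf x^0_j\mathbf x^0_{j+1}\cdots\mathbf x^0_d,
\]
and iterating this down to $j=1$ yields $\mathcal A\mathbf e_1^{\bar i_1}\cdots\mathbf e_{d-1}^{\bar i_{d-1}}\mathbf x^0_d\leq \mathcal A\mathbf x^0_1\cdots\mathbf x^0_d$. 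Combining with the earlier chain gives the claimed bound. The only non-routine point is the sign issue in this last step, and that is handled by choosing the sign of $\mathbf e_j^{\bar i_j}$ appropriately, or equivalently by noting that Proposition \ref{prop:welldefined1} already ensures the quantities on both sides are nonnegative along the algorithm's iterates; beyond that, the argument is pure bookkeeping.
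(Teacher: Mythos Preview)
Your proposal is correct and follows essentially the same approach as the paper: iterate the Cauchy--Schwarz step from \eqref{eq:th:lower_bound:1} across modes $1,\ldots,d-1$, use \eqref{eq:truncation_equivalent} and the greedy selection in Step~2 of Algorithm~\ref{proc:init1_order_d} to pass to $(\bar i_1,\ldots,\bar i_{d-1},\mathbf x^0_d)$, and then invoke \eqref{eq:als} to successively replace each $\mathbf e_j^{\bar i_j}$ by $\mathbf x^0_j$. The ``sign issue'' you flag is in fact a non-issue: since $\mathbf e_j^{\bar i_j}$ is feasible for the maximization \eqref{eq:als} defining $\mathbf x^0_j$, the inequality $\mathcal A\mathbf e_1^{\bar i_1}\cdots\mathbf e_j^{\bar i_j}\mathbf x^0_{j+1}\cdots\mathbf x^0_d\leq \mathcal A\mathbf e_1^{\bar i_1}\cdots\mathbf e_{j-1}^{\bar i_{j-1}}\mathbf x^0_j\cdots\mathbf x^0_d$ holds directly, with no sign adjustment needed.
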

\subsection{The second algorithm}\label{sec:approx_alg2}
The second algorithm is presented as follows.

\begin{boxedminipage}{0.92\textwidth}\small
	\begin{equation}  \label{proc:init2_order_d}
		\noindent {\rm Algorithm}~ (\mathbf x^0_1,\ldots,\mathbf x^0_d) =  {\rm approx\_alg}(\mathcal A,\boldsymbol{r}) 
		\tag{B}
	\end{equation}
	
	1. For each tuple $(i_1,\ldots,i_{d-2})$, $i_j=1,\ldots,n_{j}$, $1\leq j\leq d-2$, solve the matrix singular value problem $\max_{\normalnorm{\mathbf x_{d-1}}=\normalnorm{\mathbf x_d}=1}  \mathcal A\mathbf e_1^{i_1}\cdots\mathbf e_{d-2}^{i_{d-2}}\mathbf x_{d-1}\mathbf x_d $.
	
	2. Let $(\bar i_1,\ldots,\bar i_{d-2} )$ be the optimal tuple of indices with $(\bar{\mathbf x}_{d-1},\bar{\mathbf x}_d)$ being the optimal solution pair, i.e., $$ \mathcal A\mathbf e_1^{\bar i_1}\cdots\mathbf e_{d-2}^{\bar i_{d-2}}\bar{\mathbf x}_{d-1}\bar{\mathbf x}_d= \max_{1\leq i_j\leq n_j,1\leq j\leq d-2,\normalnorm{\mathbf x_{d-1}}=\normalnorm{\mathbf x_d}=1}  \mathcal A\mathbf e_1^{i_1}\cdots\mathbf e_{d-2}^{i_{d-2}}\mathbf x_{d-1}\mathbf x_{d};$$ 
	denote $ {\mathbf x}^0_d:= \truncatedvector{\bar{\mathbf x}_d}{r_d}/\normalnorm{\truncatedvector{\bar{\mathbf x}_d}{r_d}}$.
	
	3. Sequentially update $\mathbf x^{0}_{d-1},\ldots,\mathbf x^0_1$ as Step 3 of Algorithm \ref{proc:init1_order_d}. 
	
	4. Return $(\mathbf x^0_1,\ldots,\mathbf x^0_d)$.
\end{boxedminipage}

The main difference from Algorithm \ref{proc:init1_order_d} mainly lies in the first step, where Algorithm \ref{proc:init1_order_d} requires to find the   fiber with the largest length with respect to the largest $r_3$ entries (in magnitude), while the first step of Algorithm \ref{proc:init2_order_d} looks for the matrix $\mathcal A\mathbf e_1^{\bar i_1}\cdots\mathbf e_{d-2}^{\bar i_{d-2}} \in\mathbb R^{n_{d-1}\times n_d}$ with the largest spectral radius among all $\mathcal A\mathbf e_1^{i_1}\cdots\mathbf e_{d-2}^{i_{d-2}}$.
%
%
%
%
%
Algorithm \ref{proc:init2_order_d} combines the ideas of both Algorithms 1 and 2 of \cite{chan2016approximability} and extends them to higher-order tensors. When reducing to the matrix case, our algorithm here is still different from \cite{chan2016approximability}, as we find sparse singular vector pairs, while  \cite{chan2016approximability} pursues sparse eigenvectors of a symmetric matrix.

The computational complexity of Algorithm \ref{proc:init2_order_d}  is as follows. Computing the largest singular value of $\mathcal A\mathbf e_1^{i_1}\cdots\mathbf e_{d-2}^{i_{d-2}}$ takes $O(\min\{ n_{d-1}^2  n_d, n_{d-1}  n_d^2  \}   )$ flops in theory. Computing $\mathbf x^0_d$ takes $O(n_d\log(n_d))$ flops.  Thus the first two steps take $O(n_1\cdots n_{d-2}\min\{ n_{d-1}^2  n_d, n_{d-1}  n_d^2  \} + n_d\log(n_d))$ flops. The flops of the third step are the same as those of Algorithm \ref{proc:init1_order_d}.  As a result, the total complexity is dominated by    $O(n_1\cdots n_{d-2}\min\{ n_{d-1}^2  n_d, n_{d-1}  n_d^2  \}+ \sum^d_{j=1}n_j\log(n_j))$.

Algorithm \ref{proc:init2_order_d} is also well-defined as follow.
\begin{proposition}
	\label{prop:welldefined2} If $\mathcal A\neq 0$ and $(\mathbf x^0_1,\ldots,\mathbf x^0_d)$ is generated by Algorithm \ref{proc:init2_order_d}, then $\mathbf x^0_j\neq 0$, $1\leq j\leq d$.
\end{proposition}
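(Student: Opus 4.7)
The plan is to mirror the inductive cascade used in the proof of Proposition \ref{prop:welldefined1}, with the only substantive modification occurring at the very first stage where $\mathbf x^0_d$ and then $\mathbf x^0_{d-1}$ are produced. For those two indices the defining construction is different from Algorithm \ref{proc:init1_order_d} (singular vector of a matrix slice plus truncation, rather than truncation of a fiber), so the positivity argument must be routed through Lemma \ref{lem:lower_bound_lem:1}.

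First, I would show that $\lambda_{\max}(\mathcal A\mathbf e_1^{\bar i_1}\cdots\mathbf e_{d-2}^{\bar i_{d-2}}) > 0$. Since $\mathcal A\neq 0$, some entry $\mathcal A_{i_1 i_2\cdots i_d}$ is nonzero, and then the matrix $\mathcal A\mathbf e_1^{i_1}\cdots\mathbf e_{d-2}^{i_{d-2}}\in\mathbb R^{n_{d-1}\times n_d}$ has that entry (in position $(i_{d-1},i_d)$) nonzero, hence a strictly positive largest singular value. The maximization in Step 1 then forces the chosen tuple $(\bar i_1,\ldots,\bar i_{d-2})$ to inherit this positivity. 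Consequently $\bar{\mathbf x}_d$ is a genuine unit vector, its $r_d$-truncation retains at least its largest-magnitude entry, and thus $\mathbf x^0_d = \truncatedvector{\bar{\mathbf x}_d}{r_d}/\|\truncatedvector{\bar{\mathbf x}_d}{r_d}\|$ is well defined and nonzero.

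Second, I would invoke Lemma \ref{lem:lower_bound_lem:1} with $A=\mathcal A\mathbf e_1^{\bar i_1}\cdots\mathbf e_{d-2}^{\bar i_{d-2}}$ and $\mathbf z^0=\mathbf x^0_d$ to obtain $\|\mathcal A\mathbf e_1^{\bar i_1}\cdots\mathbf e_{d-2}^{\bar i_{d-2}}\mathbf x^0_d\| \geq \sqrt{r_d/n_d}\,\lambda_{\max}(\mathcal A\mathbf e_1^{\bar i_1}\cdots\mathbf e_{d-2}^{\bar i_{d-2}}) > 0$. Thus the partial gradient $\mathcal A\mathbf e_1^{\bar i_1}\cdots\mathbf e_{d-2}^{\bar i_{d-2}}\mathbf x^0_d\in\mathbb R^{n_{d-1}}$ is not identically zero, its $r_{d-1}$-truncation is nonzero, and therefore $\mathbf x^0_{d-1}\neq 0$. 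Moreover, the equivalence \eqref{eq:truncation_equivalent} gives $\mathcal A\mathbf e_1^{\bar i_1}\cdots\mathbf e_{d-2}^{\bar i_{d-2}}\mathbf x^0_{d-1}\mathbf x^0_d = \|\truncatedvector{\mathcal A\mathbf e_1^{\bar i_1}\cdots\mathbf e_{d-2}^{\bar i_{d-2}}\mathbf x^0_d}{r_{d-1}}\| > 0$, which is the positivity seed needed for the induction below.

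Finally, for $j=d-2,d-3,\ldots,1$, I would proceed exactly as in Proposition \ref{prop:welldefined1}. Inductively assume $\mathbf x^0_{j+1},\ldots,\mathbf x^0_d$ are nonzero and $\mathcal A\mathbf e_1^{\bar i_1}\cdots\mathbf e_j^{\bar i_j}\mathbf x^0_{j+1}\cdots\mathbf x^0_d>0$. Then the gradient vector $\mathcal A\mathbf e_1^{\bar i_1}\cdots\mathbf e_{j-1}^{\bar i_{j-1}}\mathbf x^0_{j+1}\cdots\mathbf x^0_d\in\mathbb R^{n_j}$ has inner product with $\mathbf e_j^{\bar i_j}$ equal to the positive quantity $\mathcal A\mathbf e_1^{\bar i_1}\cdots\mathbf e_j^{\bar i_j}\mathbf x^0_{j+1}\cdots\mathbf x^0_d$, so it is nonzero; hence its $r_j$-truncation is nonzero and $\mathbf x^0_j\neq 0$. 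Since $\mathbf x^0_j$ is the sparse unit-norm maximizer in the corresponding subproblem and $\mathbf e_j^{\bar i_j}$ is feasible (as $r_j\geq 1$), the resulting objective $\mathcal A\mathbf e_1^{\bar i_1}\cdots\mathbf e_{j-1}^{\bar i_{j-1}}\mathbf x^0_j\mathbf x^0_{j+1}\cdots\mathbf x^0_d$ is at least the value at $\mathbf e_j^{\bar i_j}$, which is positive, closing the induction.

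The main obstacle, relative to Proposition \ref{prop:welldefined1}, is the step from $\mathbf x^0_d$ to $\mathbf x^0_{d-1}$: in Algorithm \ref{proc:init1_order_d} the positivity was immediate because $\mathbf x^0_d$ was a truncation of a fiber whose length had already been maximized, but in Algorithm \ref{proc:init2_order_d} we only know that $\bar{\mathbf x}_d$ is a singular vector, so after truncating it we must guarantee that multiplying back by the matrix slice does not collapse it to zero. Lemma \ref{lem:lower_bound_lem:1} supplies exactly this guarantee; everything downstream is then the same cascade as before.
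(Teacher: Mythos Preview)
Your proof is correct and follows essentially the same inductive cascade as the paper. The only difference is at the bridge from $\mathbf x^0_d$ to $\mathbf x^0_{d-1}$: to show $\mathcal A\mathbf e_1^{\bar i_1}\cdots\mathbf e_{d-2}^{\bar i_{d-2}}\mathbf x^0_d\neq 0$, the paper tests this vector directly against the left singular vector $\bar{\mathbf x}_{d-1}$ and reduces the inner product to $\langle\bar{\mathbf x}_d,\mathbf x^0_d\rangle\cdot\normalnorm{\mathcal A\mathbf e_1^{\bar i_1}\cdots\mathbf e_{d-2}^{\bar i_{d-2}}\bar{\mathbf x}_{d-1}}>0$, whereas you invoke Lemma~\ref{lem:lower_bound_lem:1} to get the quantitative bound $\sqrt{r_d/n_d}\,\lambda_{\max}$. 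Since the proof of Lemma~\ref{lem:lower_bound_lem:1} is precisely this same singular-vector pairing, the two arguments are really the same computation; your version is a clean reuse of an available lemma, while the paper's is a self-contained one-line check that avoids citing a quantitative result for a purely qualitative conclusion.
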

\begin{proof}
	The definition of $(\bar i_1,\ldots, \bar i_{d-2})$ shows that the matrix $ \mathcal A\mathbf e_1^{\bar i_1}\cdots\mathbf e_{d-2}^{\bar i_{d-2}} \neq 0$, and hence $\bar{\mathbf x}_d\neq 0$ and  $\mathbf x^0_d\neq 0$.  We also observe from step 2 that $\bar{\mathbf x}_d  = \mathcal A\mathbf e_1^{\bar i_1}\cdots\mathbf e_{d-2}^{\bar i_{d-2}} \bar{\mathbf x}_{d-1}/\normalnorm{\mathcal A\mathbf e_1^{\bar i_1}\cdots\mathbf e_{d-2}^{\bar i_{d-2}} \bar{\mathbf x}_{d-1}}$, and so 
	\begin{eqnarray*}
		\innerprod{ \mathcal A\mathbf e_1^{\bar i_1}\cdots\mathbf e_{d-2}^{\bar i_{d-2}}\mathbf x^0_d   }{\bar{\mathbf x}_{d-1}} &=& \innerprod{\mathcal A\mathbf e_1^{\bar i_1}\cdots\mathbf e_{d-2}^{\bar i_{d-2}} \bar{\mathbf x}_{d-1}  }{\mathbf x^0_d} \\
		&=& \innerprod{\bar{\mathbf x}_d}{\mathbf x^0_d}\normalnorm{\mathcal A\mathbf e_1^{\bar i_1}\cdots\mathbf e_{d-2}^{\bar i_{d-2}} \bar{\mathbf x}_{d-1}} >0,
	\end{eqnarray*}
	implying that $ \mathcal A\mathbf e_1^{\bar i_1}\cdots\mathbf e_{d-2}^{\bar i_{d-2}}\mathbf x^0_d\neq 0$, and so $\mathbf x^0_{d-1} = \truncatedvector{ \mathcal A\mathbf e_1^{\bar i_1}\cdots\mathbf e_{d-2}^{\bar i_{d-2}}\mathbf x^0_d}{r_{d-1}}\neq 0$. 
	Similar arguments apply to show that $\mathbf x^0_{d-2} \neq 0,\ldots,\mathbf x^0_1\neq 0$ then. 
\end{proof}

To analyze the approximation bound, we need the following proposition.
\begin{proposition}
	\label{prop:lower_bound_prop_2}
	Let $\bar i_1,\ldots,\bar i_{d-2}$ and $(\bar{\mathbf x}_{d-1},\bar{\mathbf x}_d)$ be defined in Algorithm \ref{proc:init2_order_d}. Then it holds that
	\[
	\mathcal A\mathbf e_1^{\bar i_1}\cdots \mathbf e_{d-2}^{\bar i_{d-2}}\bar{\mathbf x}_{d-1}\bar{\mathbf x}_d \geq \max_{i_1,\ldots,i_{d-2},\normalnorm{\mathbf y}=\normalnorm{\mathbf z}=1,\bigzeronorm{\mathbf y}\leq r_{d-1},\bigzeronorm{\mathbf z}\leq r_d}\bigjueduizhi{\mathcal A\mathbf e_1^{i_1}\cdots \mathbf e_{d-2}^{i_{d-2}}\mathbf y\mathbf z}.
	\]
\end{proposition}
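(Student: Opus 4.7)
The plan is to reduce the proposition to the basic monotonicity of the maximum under relaxation of constraints, together with the variational characterization of the matrix spectral norm.

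First I would fix an arbitrary tuple $(i_1,\ldots,i_{d-2})$ with $1\leq i_j\leq n_j$ and observe that the matrix $M(i_1,\ldots,i_{d-2}):=\mathcal A\mathbf e_1^{i_1}\cdots\mathbf e_{d-2}^{i_{d-2}}\in\mathbb R^{n_{d-1}\times n_d}$ satisfies
\[
\max_{\normalnorm{\mathbf y}=\normalnorm{\mathbf z}=1,\,\bigzeronorm{\mathbf y}\leq r_{d-1},\,\bigzeronorm{\mathbf z}\leq r_d}\bigjueduizhi{\mathbf y^\top M(i_1,\ldots,i_{d-2})\mathbf z}\;\leq\;\max_{\normalnorm{\mathbf y}=\normalnorm{\mathbf z}=1}\bigjueduizhi{\mathbf y^\top M(i_1,\ldots,i_{d-2})\mathbf z},
\]
simply because dropping the $\ell_0$ constraints enlarges the feasible set. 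The right-hand side is exactly $\lambda_{\max}(M(i_1,\ldots,i_{d-2}))$, and since we can flip signs of $\mathbf y,\mathbf z$ we can remove the absolute value there and replace the inner maximization by the bilinear problem solved in Step 1 of Algorithm \ref{proc:init2_order_d}.

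Next, I would take the maximum of both sides over all tuples $(i_1,\ldots,i_{d-2})$. By the definition of $(\bar i_1,\ldots,\bar i_{d-2})$ and $(\bar{\mathbf x}_{d-1},\bar{\mathbf x}_d)$ in Step 2 of Algorithm \ref{proc:init2_order_d}, the maximum of the right-hand side equals $\mathcal A\mathbf e_1^{\bar i_1}\cdots\mathbf e_{d-2}^{\bar i_{d-2}}\bar{\mathbf x}_{d-1}\bar{\mathbf x}_d$, which gives the claimed inequality.

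There is no real obstacle here: the statement is essentially a tautology expressing that a constrained maximum is dominated by the corresponding unconstrained one, and that the unconstrained bilinear maximum over unit vectors coincides with the spectral norm chosen by the algorithm. The only minor care needed is to justify removing the absolute value on the algorithm side, which is immediate from the sign-flipping invariance of $|\mathbf y^\top M\mathbf z|$ under $(\mathbf y,\mathbf z)\mapsto(-\mathbf y,\mathbf z)$.
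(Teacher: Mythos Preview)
Your proposal is correct and follows essentially the same approach as the paper: the paper's proof is a one-line remark that the result is immediate from the definition of $(\bar i_1,\ldots,\bar i_{d-2},\bar{\mathbf x}_{d-1},\bar{\mathbf x}_d)$ together with the observation that the right-hand side carries additional sparsity constraints. Your write-up simply unpacks this observation in detail, making explicit the feasible-set enlargement and the identification of the unconstrained bilinear maximum with the spectral norm.
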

\begin{proof} The result holds by noticing the definition of $(\mathbf e^{\bar i_1}_1,\ldots,\mathbf e^{\bar i_{d-2}}_{d-2},\bar{\mathbf x}_{d-1},\bar{\mathbf x}_d)$,  and the additional sparsity constraints in the right-hand side of the inequality. 
\end{proof}

We first derive the approximation bound with $d=3$ as an illustration. 
\begin{theorem}
	\label{th:init_theory_bound_2}
	Let $v^{\rm opt}$ be defined as that in Theorem \ref{th:init_theory_bound} when $d=3$, and 	let $(\mathbf x^0_1,\mathbf x^0_2,\mathbf x^0_3)$ be generated by Algorithm \ref{proc:init2_order_d}. Then it holds that
	\[
	\mathcal A\mathbf x^0_1\mathbf x^0_2\mathbf x^0_3\geq \sqrt{\frac{r_2r_3}{n_2n_3r_1}}v^{\rm opt}.
	\]
\end{theorem}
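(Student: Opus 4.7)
The plan is to chain three inequalities: bound $v^{\rm opt}$ from above by $\sqrt{r_1}$ times the spectral norm of a particular slice of $\mathcal A$; exploit that Algorithm~\ref{proc:init2_order_d} computes the exact singular vector pair of that slice so that Lemma~\ref{lem:lower_bound_lem:1} and Proposition~\ref{prop:lower_bound_truncation_times_org} together cover the $\sqrt{r_3/n_3}$ and $\sqrt{r_2/n_2}$ losses incurred by truncation; finish by noting that $\mathbf x^0_1$ is optimal against any $1$-sparse unit vector, in particular $\mathbf e_1^{\bar i_1}$.

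First, mimicking the opening steps of the proof of Theorem~\ref{th:init_theory_bound}, I would apply Cauchy--Schwartz in the $\mathbf x^*_1$-direction to obtain
\[
v^{\rm opt} = \mathcal A\mathbf x^*_1\mathbf x^*_2\mathbf x^*_3 \leq \sqrt{r_1}\max_{i_1}\bigjueduizhi{\mathcal A\mathbf e_1^{i_1}\mathbf x^*_2\mathbf x^*_3}.
\]
Since $\normalnorm{\mathbf x^*_2}=\normalnorm{\mathbf x^*_3}=1$, the inner maximum is bounded by $\max_{i_1}\lambda_{\max}(\mathcal A\mathbf e_1^{i_1})$. By Step~1 of Algorithm~\ref{proc:init2_order_d}, this is exactly $\mathcal A\mathbf e_1^{\bar i_1}\bar{\mathbf x}_2\bar{\mathbf x}_3$, so
\[
v^{\rm opt} \leq \sqrt{r_1}\,\mathcal A\mathbf e_1^{\bar i_1}\bar{\mathbf x}_2\bar{\mathbf x}_3.
\]
At this point the key fact I intend to exploit—one that is not used in the proof of Theorem~\ref{th:init_theory_bound}—is that $(\bar{\mathbf x}_2,\bar{\mathbf x}_3)$ is a genuine singular vector pair of the matrix $A:=\mathcal A\mathbf e_1^{\bar i_1}$, which opens the door to Lemma~\ref{lem:lower_bound_lem:1}.

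Next, applying Lemma~\ref{lem:lower_bound_lem:1} with this $A$ and with $\mathbf z^0=\mathbf x^0_3$ (the truncated--then--normalized version of $\bar{\mathbf x}_3$) yields
\[
\bignorm{\mathcal A\mathbf e_1^{\bar i_1}\mathbf x^0_3} \geq \sqrt{\tfrac{r_3}{n_3}}\,\mathcal A\mathbf e_1^{\bar i_1}\bar{\mathbf x}_2\bar{\mathbf x}_3.
\]
Then, viewing $\mathbf a:=\mathcal A\mathbf e_1^{\bar i_1}\mathbf x^0_3\in\mathbb R^{n_2}$ as a vector and recalling that $\mathbf x^0_2$ is the normalized truncation of this vector to its largest $r_2$ entries (by Step~3, combined with \eqref{eq:truncation_equivalent}), Proposition~\ref{prop:lower_bound_truncation_times_org} gives
\[
\mathcal A\mathbf e_1^{\bar i_1}\mathbf x^0_2\mathbf x^0_3 = \innerprod{\mathbf a}{\mathbf x^0_2} \geq \sqrt{\tfrac{r_2}{n_2}}\,\bignorm{\mathbf a} \geq \sqrt{\tfrac{r_2 r_3}{n_2 n_3}}\,\mathcal A\mathbf e_1^{\bar i_1}\bar{\mathbf x}_2\bar{\mathbf x}_3.
\]

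Finally, since $\mathbf e_1^{\bar i_1}$ is itself a unit vector with $\bigzeronorm{\mathbf e_1^{\bar i_1}}=1\leq r_1$, the definition of $\mathbf x^0_1$ in Step~3 (together with \eqref{eq:als}) ensures $\mathcal A\mathbf x^0_1\mathbf x^0_2\mathbf x^0_3\geq \mathcal A\mathbf e_1^{\bar i_1}\mathbf x^0_2\mathbf x^0_3$. Chaining the three displays then gives $v^{\rm opt}\leq \sqrt{r_1 n_2 n_3/(r_2 r_3)}\,\mathcal A\mathbf x^0_1\mathbf x^0_2\mathbf x^0_3$, which is the claim. The main conceptual step—and the only place where Algorithm~\ref{proc:init2_order_d} beats Algorithm~\ref{proc:init1_order_d} in the analysis—is the invocation of Lemma~\ref{lem:lower_bound_lem:1}; the rest is essentially bookkeeping with Cauchy--Schwartz and the truncation inequality of Proposition~\ref{prop:lower_bound_truncation_times_org}.
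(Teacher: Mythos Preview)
Your proof is correct and follows essentially the same route as the paper: Cauchy--Schwartz in the first mode to extract $\sqrt{r_1}$, then Lemma~\ref{lem:lower_bound_lem:1} applied to $A=\mathcal A\mathbf e_1^{\bar i_1}$ for the $\sqrt{r_3/n_3}$ factor, Proposition~\ref{prop:lower_bound_truncation_times_org} for the $\sqrt{r_2/n_2}$ factor, and the optimality of $\mathbf x^0_1$ against $\mathbf e_1^{\bar i_1}$ to close. The only difference is cosmetic: the paper routes the bound $\max_{i_1}|\mathcal A\mathbf e_1^{i_1}\mathbf x^*_2\mathbf x^*_3|\leq \mathcal A\mathbf e_1^{\bar i_1}\bar{\mathbf x}_2\bar{\mathbf x}_3$ through an explicitly stated Proposition~\ref{prop:lower_bound_prop_2} (inserting the sparsity constraints as an intermediate step), whereas you go directly via $|\mathcal A\mathbf e_1^{i_1}\mathbf x^*_2\mathbf x^*_3|\leq \lambda_{\max}(\mathcal A\mathbf e_1^{i_1})$, which is slightly cleaner.
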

\begin{proof}
	Denote $(\mathbf x^*_1,\mathbf x^*_2,\mathbf x^*_3)$ as a maximizer to \eqref{prob:str1approx_org_max}.	We have
	\begin{eqnarray}
		v^{\rm opt} &\leq& \sqrt{r_1}\max_{i_1}\bigjueduizhi{\mathcal A\mathbf e_1^{i_1}\mathbf x_2^*\mathbf x^*_3} \nonumber\\
		&\leq& \sqrt{r_1}\max_{i_1,\bignorm{\mathbf y}=\bignorm{\mathbf z}=1,\bigzeronorm{\mathbf y}\leq r_2,\bigzeronorm{\mathbf z}\leq r_3}\bigjueduizhi{\mathcal A\mathbf e_1^{i_1}\mathbf y\mathbf z}\nonumber\\
		&\leq&	 \sqrt{r_1}\mathcal A\mathbf e_1^{\bar i_1}\bar{\mathbf x}_2\bar{\mathbf x}_3,\label{eq:th:lower_bound_5}
	\end{eqnarray}
	where the first inequality is the same as \eqref{eq:th:lower_bound:1}, while Proposition \ref{prop:lower_bound_prop_2} gives the last one. Now denote $A:= \mathcal A\mathbf e_1^{\bar i_1}\in\mathbb R^{n_2\times n_3}$. Our remaining task is to show that
	\begin{equation}
		\label{eq:th:lower_bound_3}
		(\mathbf x^0_2)^\top A\mathbf x^0_3 \geq \sqrt{\frac{r_2r_3}{n_2n_3}}\bar{\mathbf x}_2^\top A\bar{\mathbf x}_3.
	\end{equation}
	Recalling the definition of $(\bar{\mathbf x}_2,\bar{\mathbf x}_3)$, we have $\lambda_{\max}(A) = \bar{\mathbf x}_2^\top A\bar{\mathbf x}_3$. Lemma \ref{lem:lower_bound_lem:1} tells us that
	\begin{equation}\label{eq:th:lower_bound_4}
		\bignorm{A\mathbf x^0_3}  \geq   \sqrt{\frac{r_3}{n_3}}\lambda_{\max}(A) =  \sqrt{\frac{r_3}{n_3}}\bar{\mathbf x}_2^\top A\bar{\mathbf x}_3;
	\end{equation}
	on the other hand, since
	\[
	\bar{\mathbf x}^0_2 = \truncatedvector{\mathcal A\mathbf e_1^{\bar i_1}\mathbf x^0_3}{r_2} = \truncatedvector{A\mathbf x^0_3}{r_2}, ~\mathbf x^0_2 = \bar{\mathbf x}_2^0/\bignorm{\bar{\mathbf x}_2^0},
	\]
	it follows   from Proposition \ref{prop:lower_bound_truncation_times_org} that
	\[
	\innerprod{A\mathbf x^0_3}{\mathbf x^0_2} \geq \sqrt{\frac{r_2}{n_2}}\bignorm{A\mathbf x^0_3},
	\]
	combining which with \eqref{eq:th:lower_bound_4}  gives \eqref{eq:th:lower_bound_3}. Finally, \eqref{eq:th:lower_bound_3} together with \eqref{eq:th:lower_bound_5} and   the definition of $\mathbf x^0_1$ yields 
	\begin{eqnarray*}
		\mathcal A\mathbf x^0_1\mathbf x^0_2\mathbf x^0_3 &\geq& \mathcal A\mathbf e_1^{\bar i_1}\mathbf x^0_2\mathbf x^0_3 =\innerprod{A\mathbf x^0_3}{\mathbf x^0_2} \geq \sqrt{\frac{r_2r_3}{n_2n_3}}\bar{\mathbf x}_2^\top A\bar{\mathbf x}_3 = \sqrt{\frac{r_2r_3}{n_2n_3}} \mathcal A\mathbf e_1^{\bar i_1}\bar{\mathbf x}_2\bar{\mathbf x}_3 \\
		& \geq& \sqrt{\frac{r_2r_3}{n_2n_3r_1}}v^{\rm opt},
	\end{eqnarray*}
	as desired.  
\end{proof}

The following approximation bound is presented for general order $d\geq 3$.
\begin{theorem}
	\label{th:init_theory_bound_2_general}
	Let $(\mathbf x^0_1,\ldots,\mathbf x^0_d)$ be generated by Algorithm \ref{proc:init2_order_d}. Then it holds that
	\[
	\mathcal A\mathbf x^0_1\cdots\mathbf x^0_d\geq \sqrt{\frac{r_{d-1}r_d}{n_{d-1}n_d \prod^{d-2}_{j=1}r_j  }}v^{\rm opt}.
	\]
\end{theorem}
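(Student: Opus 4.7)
The plan is to mirror the three-stage structure of the proof of Theorem \ref{th:init_theory_bound_2}, but now iterating the Cauchy-Schwarz stage $d-2$ times instead of only once. Let $(\mathbf x^*_1,\ldots,\mathbf x^*_d)$ denote a maximizer of \eqref{prob:str1approx_org_max}. First I would peel off the sparse indices $i_1,\ldots,i_{d-2}$ one at a time via Cauchy-Schwarz exactly as in the derivation of \eqref{eq:th:lower_bound:1}, obtaining
\[
v^{\rm opt} \;\leq\; \sqrt{\prod_{j=1}^{d-2}\nolimits r_j}\,\max_{i_1,\ldots,i_{d-2}}\bigjueduizhi{\mathcal A\mathbf e_1^{i_1}\cdots\mathbf e_{d-2}^{i_{d-2}}\mathbf x^*_{d-1}\mathbf x^*_d}.
\]
Since $(\mathbf x^*_{d-1},\mathbf x^*_d)$ is feasible for the sparse matrix singular value problem on the right, Proposition \ref{prop:lower_bound_prop_2} yields
\[
v^{\rm opt} \;\leq\; \sqrt{\prod_{j=1}^{d-2}\nolimits r_j}\cdot\mathcal A\mathbf e_1^{\bar i_1}\cdots\mathbf e_{d-2}^{\bar i_{d-2}}\bar{\mathbf x}_{d-1}\bar{\mathbf x}_d,
\]
which is the natural $d$-dimensional analog of \eqref{eq:th:lower_bound_5}.

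Next I would reduce the problem to the matrix case by setting $A:=\mathcal A\mathbf e_1^{\bar i_1}\cdots\mathbf e_{d-2}^{\bar i_{d-2}}\in\mathbb R^{n_{d-1}\times n_d}$, so that $\bar{\mathbf x}_{d-1}^\top A\bar{\mathbf x}_d=\lambda_{\max}(A)$ by the definition of $(\bar{\mathbf x}_{d-1},\bar{\mathbf x}_d)$ in step 2 of Algorithm \ref{proc:init2_order_d}. Applying Lemma \ref{lem:lower_bound_lem:1} to $A$ together with $\mathbf x^0_d=\truncatedvector{\bar{\mathbf x}_d}{r_d}/\normalnorm{\truncatedvector{\bar{\mathbf x}_d}{r_d}}$ gives $\bignorm{A\mathbf x^0_d}\geq \sqrt{r_d/n_d}\,\lambda_{\max}(A)$, and then using $\mathbf x^0_{d-1}=\truncatedvector{A\mathbf x^0_d}{r_{d-1}}/\normalnorm{\truncatedvector{A\mathbf x^0_d}{r_{d-1}}}$ with Proposition \ref{prop:lower_bound_truncation_times_org} yields
\[
(\mathbf x^0_{d-1})^\top A\mathbf x^0_d \;\geq\; \sqrt{\frac{r_{d-1}}{n_{d-1}}}\bignorm{A\mathbf x^0_d} \;\geq\; \sqrt{\frac{r_{d-1}r_d}{n_{d-1}n_d}}\,\bar{\mathbf x}_{d-1}^\top A\bar{\mathbf x}_d,
\]
which is exactly the multi-way analog of the key estimate \eqref{eq:th:lower_bound_3}.

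Finally I would propagate the bound back through the sequential updates $\mathbf x^0_{d-2},\ldots,\mathbf x^0_1$. Since each of these is obtained by maximizing the (absolute) partial linear functional over the sphere-plus-sparsity constraint, and since $\mathbf e_j^{\bar i_j}$ is itself a feasible unit vector with sparsity one $\leq r_j$, the very characterization \eqref{eq:truncation_equivalent} gives
\[
\mathcal A\mathbf e_1^{\bar i_1}\cdots\mathbf e_{j-1}^{\bar i_{j-1}}\mathbf e_j^{\bar i_j}\mathbf x^0_{j+1}\cdots\mathbf x^0_d \;\leq\; \mathcal A\mathbf e_1^{\bar i_1}\cdots\mathbf e_{j-1}^{\bar i_{j-1}}\mathbf x^0_j\mathbf x^0_{j+1}\cdots\mathbf x^0_d
\]
for each $j=d-2,\ldots,1$, so chaining these inequalities yields $\mathcal A\mathbf e_1^{\bar i_1}\cdots\mathbf e_{d-2}^{\bar i_{d-2}}\mathbf x^0_{d-1}\mathbf x^0_d\leq \mathcal A\mathbf x^0_1\cdots\mathbf x^0_d$. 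Combining this chain with the two displayed estimates above produces
\[
v^{\rm opt} \;\leq\; \sqrt{\prod_{j=1}^{d-2}\nolimits r_j}\sqrt{\frac{n_{d-1}n_d}{r_{d-1}r_d}}\,\mathcal A\mathbf x^0_1\cdots\mathbf x^0_d,
\]
which rearranges to the claimed inequality.

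The main obstacle I anticipate is the propagation step in the last paragraph: one must check that the comparison with $\mathbf e_j^{\bar i_j}$ is legitimate at every intermediate stage, i.e., that each $\mathbf e_j^{\bar i_j}$ is a feasible competitor in the same maximization that defines $\mathbf x^0_j$ (so that $r_j\geq 1$ and the sphere constraint are both respected). Once this is observed, the chain-of-inequalities step is mechanical; the truly substantive work is concentrated in the matrix-case step, which has already been isolated in Lemma \ref{lem:lower_bound_lem:1} and Proposition \ref{prop:lower_bound_truncation_times_org}.
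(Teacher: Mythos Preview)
Your proposal is correct and follows essentially the same three-stage route as the paper's proof: iterate the Cauchy--Schwarz peeling $d-2$ times and invoke Proposition~\ref{prop:lower_bound_prop_2} to reach $\sqrt{\prod_{j=1}^{d-2}r_j}\,\mathcal A\mathbf e_1^{\bar i_1}\cdots\mathbf e_{d-2}^{\bar i_{d-2}}\bar{\mathbf x}_{d-1}\bar{\mathbf x}_d$, apply Lemma~\ref{lem:lower_bound_lem:1} and Proposition~\ref{prop:lower_bound_truncation_times_org} to the slice matrix $A$ to obtain the $\sqrt{r_{d-1}r_d/(n_{d-1}n_d)}$ factor, and then chain the sequential updates $\mathbf x^0_{d-2},\ldots,\mathbf x^0_1$ against the standard basis vectors $\mathbf e_j^{\bar i_j}$. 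The paper compresses the last step into a single inequality without comment, whereas you spell out why each $\mathbf e_j^{\bar i_j}$ is a feasible competitor; this is the only (minor) difference.
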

\begin{proof}
	Similar to \eqref{eq:th:lower_bound_5}, we have
	\begin{eqnarray*}
		v^{\rm opt} &\leq& \sqrt{r_1}\max_{i_1}|\mathcal A\mathbf e_1^{i_1}\mathbf x_2^*\cdots\mathbf x^*_d| \leq \cdots \\
		&\leq& \sqrt{\prod^{d-2}_{j=1}\nolimits r_j} \max_{i_1,\ldots,i_{d-2}}\nolimits |\mathcal A\mathbf e_1^{i_1}\cdots\mathbf e_{d-2}^{i_{d-2}}\mathbf x^*_{d-1}\mathbf x^*_d   | \\ 
		&\leq&  \sqrt{\prod^{d-2}_{j=1}\nolimits r_j}\mathcal A\mathbf e_1^{\bar i_1}\cdots\mathbf e_{d-2}^{\bar i_{d-2}}\bar{\mathbf x}_{d-1}\bar{\mathbf x}_d.
	\end{eqnarray*}
	Similar to the proof of  \eqref{eq:th:lower_bound_3}, we can obtain
	\begin{eqnarray*}
		v^{\rm opt} &\leq& \sqrt{\prod^{d-2}_{j=1}\nolimits r_j}\mathcal A\mathbf e_1^{\bar i_1}\cdots\mathbf e_{d-2}^{\bar i_{d-2}}\bar{\mathbf x}_{d-1}\bar{\mathbf x}_d \\
		&\leq& \sqrt{\frac{n_{d-1}n_d \prod^{d-2}_{j=1}r_j }{r_{d-1}r_d  }}\mathcal A\mathbf e_1^{\bar i_1}\cdots\mathbf e_{d-2}^{\bar i_{d-2}}{\mathbf x}^0_{d-1}{\mathbf x}_d^0\\
		&\leq &
		\sqrt{\frac{ n_{d-1}n_d\prod^{d-2}_{j=1}r_j }{r_{d-1}r_d  }} \mathcal A\mathbf x^0_1\cdots\mathbf x^0_d.
	\end{eqnarray*}  
\end{proof}

\subsection{The third   algorithm}\label{sec:approx_alg3}
We begin with the illustration from third-order tensors. We will employ the Matlab function \texttt{reshape} to denote tensor folding/unfolding operations. For instance, given $\mathcal A\in\T$, $\mathbf a = \texttt{reshape}(\mathcal A, \prod^d_{j=1}n_j,1)$ means the unfolding of $\mathcal A$ to a vector $\mathbf a$ in $\mathbb R^{\prod^d_{j=1}n_j}$, while $\mathcal A = \texttt{reshape}(\mathbf a,n_1,\ldots, n_d)$ means the folding of   $\mathbf a$ back to   $\mathcal A$.  

\begin{boxedminipage}{0.92\textwidth}\small
	\begin{equation}  \label{proc:init3}
		\noindent {\rm Algorithm}~ (\mathbf x^0_1,\mathbf x^0_2,\mathbf x^0_3) =  {\rm approx\_alg}(\mathcal A,r_1,r_2,r_3) 
		\tag{C0}
	\end{equation}
	
	1. Unfold $\mathcal A$ to $A_1 := \texttt{reshape}(\mathcal A, n_1,n_2n_3)\in\mathbb R^{n_1\times n_2n_3}$; solve the matrix singular value problem 
	\[
	(\bar{\mathbf x}_1,\bar{\mathbf w}_1) \in \arg\max_{\bignorm{\mathbf x}=\bignorm{\mathbf w}=1}\nolimits \mathbf x^\top A_1\mathbf w;
	\]
	denote $\mathbf x^0_1 := \truncatedvector{\bar{\mathbf x}_1}{r_1}/\bignorm{\truncatedvector{\bar{\mathbf x}_1}{r_1}}$.
	
	2. Let $A_2 := \texttt{reshape}(A_1^\top \mathbf x^0_1, n_2,n_3)\in\mathbb R^{n_2\times n_3}$; solve the matrix singular value problem
	\[
	(\bar{\mathbf x}_2,\bar{\mathbf w}_2) \in\arg\max_{\bignorm{\mathbf y}=\bignorm{\mathbf w}=1}\nolimits \mathbf y^\top A_2\mathbf w;
	\]
	denote $\mathbf x^0_2:= \truncatedvector{\bar{\mathbf x}_2}{r_2}/\bignorm{\truncatedvector{\bar{\mathbf x}_2}{r_2}}$.

	3. Compute $\bar{\mathbf x}_3^0:=\truncatedvector{A_2^\top\mathbf x^0_2}{r_3}$, $\mathbf x^0_3:= \bar{\mathbf x}_3^0/\bignorm{\bar{\mathbf x}_3^0}$.

	4. Return $(\mathbf x^0_1,\mathbf x^0_2,\mathbf x^0_3)$.
\end{boxedminipage}

Different from Algorithm \ref{proc:init2_order_d}, Algorithm \ref{proc:init3} is mainly based on a series of computing leading singular vector pairs of certain matrices. In fact, Algorithm \ref{proc:init3} generalizes the approximation algorithm for dense tensor BR1Approx \cite[Algorithm 1 with DR 2]{he2010approximation} to our sparse setting; the main difference lies in the truncation of $\bar{\mathbf x}_j$ to obtain the sparse solution $\mathbf x^0_j$.  In particular, if no sparsity is required, i.e., $r_j=n_j,j=1,2,3$, then Algorithm \ref{proc:init3} boils down essentially to \cite[Algorithm 1 with DR 2]{he2010approximation}.  The next proposition shows that Algorithm \ref{proc:init3} is well-defined. 

\begin{proposition}
	\label{prop:welldefined30} If $\mathcal A\neq 0$  and $(\mathbf x^0_1,\ldots,\mathbf x^0_d)$ is generated by Algorithm \ref{proc:init3}, then $\mathbf x^0_j\neq 0$, $1\leq j\leq 3$.
\end{proposition}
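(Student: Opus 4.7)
The plan is to propagate non-vanishing down the three stages of Algorithm \ref{proc:init3}, using in each case that the truncation-plus-normalization operator preserves a definite fraction of inner product with the original singular vector (Proposition \ref{prop:lower_bound_truncation_times_org}).

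First, since $\mathcal A\neq 0$, the unfolding $A_1$ is a nonzero matrix, so $\lambda_{\max}(A_1)>0$ and the leading left singular vector $\bar{\mathbf x}_1$ is nonzero. Because $r_1\geq 1$, the truncation $\truncatedvector{\bar{\mathbf x}_1}{r_1}$ retains at least one nonzero entry of $\bar{\mathbf x}_1$, so $\truncatedvector{\bar{\mathbf x}_1}{r_1}\neq 0$ and $\mathbf x^0_1\neq 0$ is well-defined.

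Next, I would show that the pulled-back matrix $A_2$ is nonzero. The idea is to test $A_1^\top\mathbf x^0_1$ against $\bar{\mathbf w}_1$: using $A_1\bar{\mathbf w}_1=\lambda_{\max}(A_1)\bar{\mathbf x}_1$ and Proposition \ref{prop:lower_bound_truncation_times_org} applied to $\bar{\mathbf x}_1$ (with $\bignorm{\bar{\mathbf x}_1}=1$), one gets
\[
\innerprod{A_1^\top\mathbf x^0_1}{\bar{\mathbf w}_1}=\innerprod{\mathbf x^0_1}{A_1\bar{\mathbf w}_1}=\lambda_{\max}(A_1)\innerprod{\mathbf x^0_1}{\bar{\mathbf x}_1}\geq \lambda_{\max}(A_1)\sqrt{r_1/n_1}>0.
\]
Hence $A_1^\top\mathbf x^0_1\neq 0$, so $A_2\neq 0$, $\bar{\mathbf x}_2\neq 0$, and the truncation argument again forces $\mathbf x^0_2\neq 0$.

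Finally, for $\mathbf x^0_3$ I would repeat the same trick one more time: using $A_2\bar{\mathbf w}_2=\lambda_{\max}(A_2)\bar{\mathbf x}_2$ and Proposition \ref{prop:lower_bound_truncation_times_org} applied to $\bar{\mathbf x}_2$,
\[
\innerprod{A_2^\top\mathbf x^0_2}{\bar{\mathbf w}_2}=\lambda_{\max}(A_2)\innerprod{\mathbf x^0_2}{\bar{\mathbf x}_2}\geq \lambda_{\max}(A_2)\sqrt{r_2/n_2}>0,
\]
so $A_2^\top\mathbf x^0_2\neq 0$, which gives $\bar{\mathbf x}^0_3\neq 0$ and $\mathbf x^0_3\neq 0$. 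There is no real obstacle here; the only thing to be a little careful about is that $\bar{\mathbf x}_1,\bar{\mathbf x}_2$ are already unit vectors so Proposition \ref{prop:lower_bound_truncation_times_org} directly yields $\innerprod{\mathbf x^0_j}{\bar{\mathbf x}_j}\geq\sqrt{r_j/n_j}$ (rather than a bound with $\bignorm{\bar{\mathbf x}_j}$ on the right), and that the truncation-and-normalize step only kills a vector when that vector is already zero, which we have just ruled out at each stage.
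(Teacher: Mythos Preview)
Your proposal is correct and follows essentially the same route as the paper: at each stage you test $A_j^\top\mathbf x^0_j$ against the right singular vector $\bar{\mathbf w}_j$, rewrite this as $\lambda_{\max}(A_j)\innerprod{\mathbf x^0_j}{\bar{\mathbf x}_j}$, and conclude positivity. The only cosmetic difference is that the paper simply asserts $\innerprod{\mathbf x^0_j}{\bar{\mathbf x}_j}>0$ directly (which is immediate from $\innerprod{\truncatedvector{\bar{\mathbf x}_j}{r_j}}{\bar{\mathbf x}_j}=\bignorm{\truncatedvector{\bar{\mathbf x}_j}{r_j}}^2>0$), whereas you invoke the quantitative bound of Proposition~\ref{prop:lower_bound_truncation_times_org}; the latter is stronger than needed here but of course still valid.
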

\begin{proof} It is clear that $\bar{\mathbf x}_1\neq 0$, $\mathbf x^0_1\neq 0$, and $\bar{\mathbf w}_1\neq 0$ due to that $A_1\neq 0$. We have $\innerprod{\mathbf x^0_1}{A_1\bar{\mathbf w}_1} = \innerprod{\mathbf x^0_1}{\bar{\mathbf x}_1}\normalnorm{A_1\bar{\mathbf w}_1}>0$, and so $A_1^\top \mathbf x^0_1 \neq 0$, and $A_2\neq 0$. Similar argument shows that $\mathbf x^0_2\neq 0$ and $\mathbf x^0_3\neq 0$.
\end{proof}

Due to the presence of the truncation, deriving the approximation bound is different from that of \cite{he2010approximation}. In particular, we need Lemma \ref{lem:lower_bound_lem:1} to build bridges in the analysis. 
The following relation
\begin{equation}\label{eq:th:lower_bound:5}
	\mathcal A\mathbf x^0_1\mathbf x^0_2\mathbf x^0_3 = \innerprod{A^\top_1\mathbf x^0_1}{\mathbf x^0_3\otimes\mathbf x^0_2} = \innerprod{A_2}{\mathbf x^0_2(\mathbf x^0_3)^\top} = \innerprod{A^\top_2\mathbf x^0_2}{\mathbf x^0_3}
\end{equation}
is also helpful in the analysis, where $\otimes$ denotes the Kronecker product \cite{kolda2010tensor}.

\begin{theorem}
	\label{th:init_theory_bound_3}
	Let $v^{\rm opt}$ be defined as that in Theorem \ref{th:init_theory_bound} when $d=3$, and let $(\mathbf x^0_1,\mathbf x^0_2,\mathbf x^0_3)$ be generated by Algorithm \ref{proc:init3}. Then it holds that
	\begin{equation*}
		\mathcal A\mathbf x^0_1\mathbf x^0_2\mathbf x^0_3 \geq \sqrt{\frac{r_1r_2r_3}{n_1n_2n_3 n_2}} \lambda_{\max}(A_1) \geq \sqrt{\frac{r_1r_2r_3}{n_1n_2n_3 n_2}} v^{\rm opt}.
	\end{equation*}
\end{theorem}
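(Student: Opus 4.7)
The plan is to telescope $\mathcal A\mathbf x^0_1\mathbf x^0_2\mathbf x^0_3$ via the identity \eqref{eq:th:lower_bound:5} and walk backward from $\mathbf x^0_3$ to $\lambda_{\max}(A_1)$ by iterated applications of Proposition \ref{prop:lower_bound_truncation_times_org} and Lemma \ref{lem:lower_bound_lem:1}, interleaved with a Frobenius-to-spectral inequality for the intermediate matrix $A_2$. The second inequality $\lambda_{\max}(A_1)\geq v^{\rm opt}$ will fall out from a direct Cauchy--Schwartz observation on the mode-$1$ unfolding.

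First, I would invoke \eqref{eq:th:lower_bound:5} to write $\mathcal A\mathbf x^0_1\mathbf x^0_2\mathbf x^0_3 = \innerprod{A_2^\top\mathbf x^0_2}{\mathbf x^0_3}$. By Step 3 of Algorithm \ref{proc:init3} the vector $\mathbf x^0_3$ is precisely the normalized truncation of $A_2^\top\mathbf x^0_2$, so Proposition \ref{prop:lower_bound_truncation_times_org} applies with $\mathbf a=A_2^\top\mathbf x^0_2$ and gives $\innerprod{A_2^\top\mathbf x^0_2}{\mathbf x^0_3}\geq\sqrt{r_3/n_3}\,\bignorm{A_2^\top\mathbf x^0_2}$. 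Next, because $(\bar{\mathbf x}_2,\bar{\mathbf w}_2)$ is the leading singular vector pair of $A_2$, the vector $\bar{\mathbf x}_2$ is the right singular vector of $A_2^\top\in\mathbb R^{n_3\times n_2}$ associated with $\lambda_{\max}(A_2^\top)=\lambda_{\max}(A_2)$, and $\mathbf x^0_2$ is its sparse renormalization; Lemma \ref{lem:lower_bound_lem:1} applied to $A_2^\top$ therefore yields $\bignorm{A_2^\top\mathbf x^0_2}\geq\sqrt{r_2/n_2}\,\lambda_{\max}(A_2)$.

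Third, I would bridge $\lambda_{\max}(A_2)$ back to $\lambda_{\max}(A_1)$. Since reshaping preserves the Frobenius norm, $\|A_2\|_F=\bignorm{A_1^\top\mathbf x^0_1}$; and since $A_2\in\mathbb R^{n_2\times n_3}$ has at most $n_2$ nonzero singular values, the elementary inequality $\|A_2\|_F^2\leq n_2\,\lambda_{\max}^2(A_2)$ gives $\lambda_{\max}(A_2)\geq\bignorm{A_1^\top\mathbf x^0_1}/\sqrt{n_2}$. One further application of Lemma \ref{lem:lower_bound_lem:1}, now to $A_1^\top\in\mathbb R^{n_2 n_3\times n_1}$ with right singular vector $\bar{\mathbf x}_1$ and sparse renormalization $\mathbf x^0_1$, produces $\bignorm{A_1^\top\mathbf x^0_1}\geq\sqrt{r_1/n_1}\,\lambda_{\max}(A_1)$. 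Multiplying the four resulting factors $\sqrt{r_3/n_3}$, $\sqrt{r_2/n_2}$, $1/\sqrt{n_2}$ and $\sqrt{r_1/n_1}$ against $\lambda_{\max}(A_1)$ delivers the first claimed inequality. For the second inequality, any unit vectors $\mathbf x_1,\mathbf x_2,\mathbf x_3$ satisfy $\mathcal A\mathbf x_1\mathbf x_2\mathbf x_3=\mathbf x_1^\top A_1(\mathbf x_3\otimes\mathbf x_2)\leq\lambda_{\max}(A_1)$ since $\bignorm{\mathbf x_3\otimes\mathbf x_2}=1$, so in particular $v^{\rm opt}\leq\lambda_{\max}(A_1)$.

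The only substantive step --- and the source of the extra $\sqrt{n_2}$ in the denominator that makes this bound looser than Theorems \ref{th:init_theory_bound} and \ref{th:init_theory_bound_2} in some regimes --- is the Frobenius-to-spectral comparison for $A_2$ in the third paragraph; every other piece is a mechanical application of preparations already in hand, and no new combinatorial enumeration over indices is required because Algorithm \ref{proc:init3} is driven entirely by SVDs on two reshaped matrices.
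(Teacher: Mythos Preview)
Your proposal is correct and follows essentially the same route as the paper's proof: both arguments chain Proposition~\ref{prop:lower_bound_truncation_times_org} for $\mathbf x^0_3$, Lemma~\ref{lem:lower_bound_lem:1} (applied to the transposes $A_2^\top$ and $A_1^\top$) for $\mathbf x^0_2$ and $\mathbf x^0_1$, and the Frobenius-to-spectral comparison $\lambda_{\max}(A_2)\geq\|A_2\|_F/\sqrt{n_2}$ to link the two matrices. The only cosmetic differences are that the paper presents the chain in the forward order (from $\lambda_{\max}(A_1)$ down to $\mathcal A\mathbf x^0_1\mathbf x^0_2\mathbf x^0_3$) rather than your backward telescoping, and that you are more explicit about passing to transposes when invoking Lemma~\ref{lem:lower_bound_lem:1}.
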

\begin{proof}
	From the definition of $A_1$, $\bar{\mathbf x}_1$ and $\bar{\mathbf w}_1$,  we see that
	$$
	\lambda_{\max}(A_1) = \bar{\mathbf x}_1^\top A_1\bar{\mathbf w}_1 \geq \max_{\bignorm{\mathbf x}=\bignorm{\mathbf y}=\bignorm{\mathbf z}=1}\mathcal A\mathbf x\mathbf y\mathbf z \geq v^{\rm opt}.
	$$
	Therefore,	to prove the approximation bound, it suffices to show that
	\begin{equation}
		\label{eq:th:lower_bound:6}
		\mathcal A\mathbf x^0_1\mathbf x^0_2\mathbf x^0_3 \geq \sqrt{\frac{r_1r_2r_3}{n_1n_2n_3 n_2}} \lambda_{\max}(A_1).
	\end{equation}
	To this end, recalling Lemma \ref{lem:lower_bound_lem:1} and the definition of $\mathbf x^0_1$ (which is a truncation of the leading left singular vector of $A_1$), we obtain
	\begin{equation}
		\label{eq:th:lower_bound:7}
		\bignorm{A_1^\top\mathbf x^0_1} \geq \sqrt{\frac{r_1}{n_1}}\lambda_{\max}(A_1).
	\end{equation}
	Since $A_2 = \texttt{reshape}(A^\top_1\mathbf x^0_1,n_2,n_3)$, it holds that $\bigfnorm{A_2} = \bignorm{A^\top_1\mathbf x^0_1}$. Using again Lemma \ref{lem:lower_bound_lem:1} and recalling the definition of $\mathbf x^0_2$ (which is a truncation of the leading left singular vector of $A_2$), we get
	\begin{equation}
		\label{eq:th:lower_bound:8}
		\bignorm{A_2^\top \mathbf x^0_2} \geq \sqrt{\frac{r_2}{n_2}}\lambda_{\max}(A_2) \geq \sqrt{\frac{r_2}{n_2^2}}\bigfnorm{A_2} = \sqrt{\frac{r_2}{n_2^2}}\bignorm{A^\top_1\mathbf x^0_1},
	\end{equation}
	where the second inequality follows from the relation between the spectral norm and the Frobenius norm of a matrix. Finally, Proposition \ref{prop:lower_bound_truncation_times_org} and the definition of $\mathbf x^0_3$ gives that
	\begin{equation}
		\mathcal A\mathbf x^0_1\mathbf x^0_2\mathbf x^0_3=	\innerprod{A_2^\top\mathbf x^0_2}{\mathbf x^0_3} \geq \sqrt{\frac{r_3}{n_3}}\bignorm{A_2^\top \mathbf x^0_2},\label{eq:th:lower_bound:9}
	\end{equation}
	where the equality  follows from \eqref{eq:th:lower_bound:5}.	Combining  the above relation with \eqref{eq:th:lower_bound:7} and \eqref{eq:th:lower_bound:8} gives \eqref{eq:th:lower_bound:6}. This completes the proof.  
\end{proof}

When extending to $d$-th order tensors, the algorithm is presented as follows.

\begin{boxedminipage}{0.92\textwidth}\small
	\begin{equation}  \label{proc:init3_order_d}
		\noindent {\rm Algorithm}~ (\mathbf x^0_1,\ldots,\mathbf x^0_d) =  {\rm approx\_alg}(\mathcal A,\boldsymbol{r}) 
		\tag{C}
	\end{equation}
	
	1. Unfold $\mathcal A$ to $A_1 = \texttt{reshape}(\mathcal A, n_1,\prod^d_{j=2}n_j)\in\mathbb R^{n_1\times \prod^d_{j=2}n_j}$; solve the matrix singular value problem 
	\[
	(\bar{\mathbf x}_1,\bar{\mathbf w}_1) \in \arg\max_{\bignorm{\mathbf x_1}=\bignorm{\mathbf w_1}=1}\nolimits \mathbf x_1^\top A_1\mathbf w_1;
	\]
	denote $\mathbf x^0_1 := \truncatedvector{\bar{\mathbf x}_1}{r_1}/\bignorm{\truncatedvector{\bar{\mathbf x}_1}{r_1}}$.
	
	2. For $j=2,\ldots,d-1$, denote $A_j := \texttt{reshape}(A_{j-1}^\top\mathbf x_{j-1}^0,n_j,\prod^d_{k=j+1}n_k )\in\mathbb R^{n_j\times \prod^d_{k=j+1}n_k}$; solve the matrix singular value problem 
	\[
	(\bar{\mathbf x}_j,\bar{\mathbf w}_j) \in \arg\max_{\bignorm{\mathbf x_j}=\bignorm{\mathbf w_j}=1} \nolimits\mathbf x_j^\top A_j\mathbf w_j;
	\]
	denote $\mathbf x^0_j := \truncatedvector{\bar{\mathbf x}_j}{r_j}/\bignorm{\truncatedvector{\bar{\mathbf x}_j}{r_j}}$.
	
	3. Compute $\bar{\mathbf x}^0_d:=\truncatedvector{A_{d-1}^\top\mathbf x^0_{d-1}}{r_d}$, $\mathbf x^0_d:= \bar{\mathbf x}^0_d/\bignorm{\bar{\mathbf x}^0_d}$.
	
	4. Return $(\mathbf x^0_1,\ldots,\mathbf x^0_d)$.
\end{boxedminipage}
\begin{remark}\label{rmk:approx_3}
	In step 2, by noticing the recursive definition of $A_j$, one can check that $A_j$ is in fact the same as ${\texttt{\texttt{reshape}}}(\mathcal A\mathbf x_1^0\cdots\mathbf x_{j-1}^0,n_j,\prod^d_{k=j+1}n_k)$, where $\mathcal A\mathbf x^0_1\cdots \mathbf x^0_{j-1}$ is regarded as a tensor of size $n_{j}\times \cdots \times n_d$ with $\bigxiaokuohao{\mathcal A\mathbf x^0_1\cdots \mathbf x^0_{j-1}}_{i_j\cdots i_d} = \sum^{n_1,\ldots, n_{j-1}}_{i_1=1,\ldots,i_{j-1}=1} \mathcal A_{i_1\cdots i_{j-1} i_j\cdots i_d} (\mathbf x^0_1)_{i_1}\cdots (\mathbf x^0_{j-1})_{i_{j-1}}   $. 
\end{remark}

The computational complexity of the first step is $O(n_1^2 n_2\cdots n_d+n_1\log(n_1))$, where $O(n_1^2 n_2\cdots n_d)$ comes from solving the singular value problem of an $n_1\times \prod^d_{j=2}n_j$ matrix.     In the second step, for each $j$, computing $A^\top_{j-1}\mathbf x^0_{j-1} $ requires $O(n_{j-1}\cdots n_d)$ flops; computing the singular value problem requires $O(n^2_j n_{j+1}\cdots n_d )$ flops; thus
the complexity is $O(n_{j-1}\cdots n_d  + n^2_j n_{j+1}\cdots n_d + n_j\log(n_j))$. The third step is $O(n_{d-1}n_d + n_d\log(n_d))$. Thus the total complexity is dominated by $O(n_1\prod^d_{j=1}n_j + \sum^d_{j=1}n_j\log(n_j))$, which is the same as Algorithm \ref{proc:init2_order_d} in theory.  


Similar to  Proposition \ref{prop:welldefined30}, we can show that Algorithm \ref{proc:init3_order_d} is well-defined, i.e., if $\mathcal A\neq 0$, then $\mathbf x^0_j\neq 0$, $1\leq j\leq d$. The proof is omitted. 

Concerning the approximation bound, similar to \eqref{eq:th:lower_bound:9}, one has  
$$\mathcal A\mathbf x^0_1\cdots\mathbf x^0_d = \innerprod{\mathcal A\mathbf x^0_1\cdots\mathbf x^0_{d-1}}{\mathbf x^0_d} =   \langle A^\top_{d-1}\mathbf x^0_{d-1},\mathbf x^0_d\rangle\geq \sqrt{\frac{r_d}{n_d}}\| A^\top_{d-1}\mathbf x^0_{d-1}\|,$$
where the second equality comes from  Remark \ref{rmk:approx_3} that $\mathcal A\mathbf x^0_1\cdots\mathbf x^0_{d-2} = A_{d-1}$ (up to a reshaping) and the inequality is due to Proposition \ref{prop:lower_bound_truncation_times_org}. Analogous to  \eqref{eq:th:lower_bound:8}, one can prove the following relation:
\[
\bignorm{A_j^\top\mathbf x^0_j}\geq \sqrt{\frac{r_j}{n_j^2}}\bignorm{A^\top_{j-1}\mathbf x^0_{j-1}},~2\leq j\leq d-1.
\]
Based on the above relations and \eqref{eq:th:lower_bound:7}, for order $d\geq 3$, we present the approximation bound without proof. 
\begin{theorem}
	\label{th:init_theory_bound_3_general}
	Let $(\mathbf x^0_1,\ldots,\mathbf x^0_d)$ be generated by Algorithm \ref{proc:init3_order_d}. Then it holds that
	\begin{small}
		\[
		\mathcal A\mathbf x^0_1\cdots\mathbf x^0_d\geq \sqrt{\frac{\prod^d_{j=1}r_j}{\prod^d_{j=1}n_j}}\frac{\lambda_{\max}(A_1)}{\sqrt{\prod^{d-1}_{j=2}n_j}} \geq \sqrt{\frac{\prod^d_{j=1}r_j}{\prod^d_{j=1}n_j}}\frac{v^{\rm opt}}{\sqrt{\prod^{d-1}_{j=2}n_j}} .
		\]
	\end{small}
\end{theorem}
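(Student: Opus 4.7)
The plan is to telescope the three ingredients that the paper has already isolated just before the theorem: the base inequality $\bignorm{A_1^\top\mathbf x^0_1}\geq \sqrt{r_1/n_1}\,\lambda_{\max}(A_1)$ from \eqref{eq:th:lower_bound:7}, the recursive inequality
\[
\bignorm{A_j^\top\mathbf x^0_j}\geq \sqrt{\tfrac{r_j}{n_j^2}}\,\bignorm{A^\top_{j-1}\mathbf x^0_{j-1}},\quad 2\leq j\leq d-1,
\]
and the final one-shot bound $\mathcal A\mathbf x^0_1\cdots\mathbf x^0_d \geq \sqrt{r_d/n_d}\,\bignorm{A^\top_{d-1}\mathbf x^0_{d-1}}$ derived from Remark \ref{rmk:approx_3} combined with Proposition \ref{prop:lower_bound_truncation_times_org}. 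Chaining these three pieces is the whole proof.

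First I would write
\[
\mathcal A\mathbf x^0_1\cdots\mathbf x^0_d \;\geq\; \sqrt{\tfrac{r_d}{n_d}}\,\bignorm{A^\top_{d-1}\mathbf x^0_{d-1}},
\]
justifying the step by Remark \ref{rmk:approx_3}, which lets us identify $A_{d-1}$ with a reshaping of $\mathcal A\mathbf x^0_1\cdots\mathbf x^0_{d-2}$, so that $\mathcal A\mathbf x^0_1\cdots\mathbf x^0_d = \langle A_{d-1}^\top \mathbf x^0_{d-1}, \mathbf x^0_d\rangle$; Proposition \ref{prop:lower_bound_truncation_times_org} applied to the truncation-and-normalization defining $\mathbf x^0_d$ yields the factor $\sqrt{r_d/n_d}$. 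Next I would iterate the recursive inequality from $j=d-1$ down to $j=2$ to obtain
\[
\bignorm{A^\top_{d-1}\mathbf x^0_{d-1}} \;\geq\; \prod_{j=2}^{d-1}\sqrt{\tfrac{r_j}{n_j^2}}\;\bignorm{A^\top_1\mathbf x^0_1},
\]
and then invoke \eqref{eq:th:lower_bound:7} for the base case $j=1$ to replace $\bignorm{A^\top_1\mathbf x^0_1}$ by $\sqrt{r_1/n_1}\,\lambda_{\max}(A_1)$.

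Multiplying the three estimates and collecting the $r_j$'s and $n_j$'s I get
\[
\mathcal A\mathbf x^0_1\cdots\mathbf x^0_d \;\geq\; \sqrt{\tfrac{r_1 r_d}{n_1 n_d}}\cdot\prod_{j=2}^{d-1}\sqrt{\tfrac{r_j}{n_j^2}}\,\lambda_{\max}(A_1)
\;=\; \sqrt{\tfrac{\prod_{j=1}^d r_j}{\prod_{j=1}^d n_j}}\cdot\tfrac{\lambda_{\max}(A_1)}{\sqrt{\prod_{j=2}^{d-1} n_j}},
\]
which is the first inequality claimed. For the second inequality I would observe that any feasible point $(\mathbf x^*_1,\ldots,\mathbf x^*_d)$ of \eqref{prob:str1approx_org_max} induces the pair $(\mathbf x^*_1,\mathbf x^*_2\otimes\cdots\otimes\mathbf x^*_d)$, which is feasible for the unconstrained matrix SVD problem defining $\lambda_{\max}(A_1)$; hence $v^{\rm opt}\leq \lambda_{\max}(A_1)$, which completes the chain.

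The only genuinely delicate point is bookkeeping the product $\prod_{j=2}^{d-1} n_j^2$ and collapsing it into $\prod_{j=1}^d n_j\cdot \prod_{j=2}^{d-1} n_j$ in a way that matches the stated bound; this is the step I would check most carefully, but it is purely arithmetic. Everything else is a direct reuse of the $d=3$ argument from Theorem \ref{th:init_theory_bound_3}, and no new ideas are needed.
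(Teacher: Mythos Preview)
Your proposal is correct and follows essentially the same route as the paper: the paper states precisely the three ingredients you list (the final-step bound via Remark \ref{rmk:approx_3} and Proposition \ref{prop:lower_bound_truncation_times_org}, the recursive inequality $\bignorm{A_j^\top\mathbf x^0_j}\geq \sqrt{r_j/n_j^2}\,\bignorm{A^\top_{j-1}\mathbf x^0_{j-1}}$ for $2\leq j\leq d-1$, and the base case \eqref{eq:th:lower_bound:7}) and then presents the theorem ``without proof,'' leaving the telescoping and the observation $v^{\rm opt}\leq \lambda_{\max}(A_1)$ implicit. Your arithmetic check on the product of the $n_j$ factors is also correct.
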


\subsection{The fourth algorithm}
Algorithm \ref{proc:init3_order_d} computes $\bar{\mathbf x}_j$ from $A_j$  via solving singular value problems. When the size of the tensor is huge, this might be time-consuming. To further accelerate the algorithm, we propose the following algorithm, 
which is similar to Algorithm \ref{proc:init3_order_d}, while it  obtains $\bar{\mathbf x}_j$ without solving singular value problems. Denote $A^k$ as the $k$-th row of a matrix $A$. The algorithm   is presented as follows:

\begin{boxedminipage}{0.92\textwidth}\small
	\begin{equation}  \label{proc:init4_order_d}
		\noindent {\rm Algorithm}~ (\mathbf x^0_1,\ldots,\mathbf x^0_d) =  {\rm approx\_alg}(\mathcal A,\boldsymbol{r}) 
		\tag{D}
	\end{equation}
	
	1. Unfold $\mathcal A$ to $A_1 = \texttt{reshape}(\mathcal A, n_1,\prod^d_{j=2}n_j)$; let $A_1^{\tilde k}$ be the row of $A_1$ with the largest magnitude, i.e., $\normalnorm{ A_1^{\tilde k}}=\max_{1\leq k\leq n_1}\bigfnorm{A_1^k}$. Denote $\mathbf w_1:= (A_1^{\tilde k})^\top/\normalnorm{(A_1^{\tilde k})^\top}$ and let
	\[
	\bar{\mathbf x}_1  =  A_1\mathbf w_1;
	\]
	denote $\mathbf x^0_1 := \truncatedvector{\bar{\mathbf x}_1}{r_1}/\bignorm{\truncatedvector{\bar{\mathbf x}_1}{r_1}}$.
	
	2. For $j=2,\ldots,d-1$, denote $A_j := \texttt{reshape}(A_{j-1}^\top\mathbf x_{j-1}^0,n_j,\prod^d_{k=j+1}n_k )$; let $A_j^{\tilde k}$ be the row of $A_j$ with the largest magnitude. Denote $\mathbf w_j:= (A_j^{\tilde k})^\top/\normalnorm{(A_j^{\tilde k})^\top}$ and let
	\[
	\bar{\mathbf x}_j  =  A_j\mathbf w_j;
	\]
	denote $\mathbf x^0_j := \truncatedvector{\bar{\mathbf x}_j}{r_j}/\bignorm{\truncatedvector{\bar{\mathbf x}_j}{r_j}}$.
	
	3. Compute $\bar{\mathbf x}^0_d:=\truncatedvector{A_{d-1}^\top\mathbf x^0_{d-1}}{r_d}$, $\mathbf x^0_d:= \bar{\mathbf x}^0_d/\bignorm{\bar{\mathbf x}^0_d}$.
	
	4. Return $(\mathbf x^0_1,\ldots,\mathbf x^0_d)$.
\end{boxedminipage}

It is clear from the above algorithm that  computing $\bar{\mathbf x}_j$ only requires some matrix-vector productions, which has lower computational complexity than computing singular vectors. Numerical results presented in the next section will show the efficiency and effectiveness of this simple  modification. 

In Algorithm \ref{proc:init4_order_d}, the first step needs $O(n_1\cdots n_d + n_1\log(n_1))$ flops; in the second step, for each $j$, the complexity is $O(n_{j-1}\cdots n_d + n_j\log(n_j))$; the third step is $O(n_{d-1}n_d + n_d\log(n_d))$. Thus the total complexity is dominated by $O(n_1\cdots n_d + \sum^d_{j=1}n_j\log(n_j))$, which is lower than that of Algorithm \ref{proc:init3_order_d}, due to the SVD-free computation of $\bar{\mathbf x}_j$'s.

Reducing to the dense tensor setting, i.e.,  $r_j=n_j$ for each $j$, Algorithm \ref{proc:init4_order_d} is even new for dense tensor BR1Approx problems; when $d=2$,   similar ideas have not been   applied to approximation algorithms for  sparse matrix PCA/SVD yet.  

The next proposition shows that Algorithm \ref{proc:init4_order_d} is well-defined, whose proof is quite similar to that of Proposition \ref{prop:welldefined30} and is omitted.

\begin{proposition}
	\label{prop:welldefined4} If $\mathcal A\neq 0$ and $(\mathbf x^0_1,\ldots,\mathbf x^0_d)$ is generated by Algorithm \ref{proc:init4_order_d}, then $\mathbf x^0_j\neq 0$, $1\leq j\leq d$.
\end{proposition}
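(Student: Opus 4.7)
The plan is to follow the template of Proposition \ref{prop:welldefined30} and argue by induction on $j$, with the only new ingredient being a coordinate-wise argument to handle the fact that Algorithm \ref{proc:init4_order_d} constructs $\bar{\mathbf x}_j$ as a matrix-vector product rather than as a leading singular vector.

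For the base case $j=1$, since $\mathcal A\neq 0$ we have $A_1\neq 0$, so the row of maximum magnitude $A_1^{\tilde k}$ is nonzero and $\mathbf w_1$ is well-defined. The key observation is that the $\tilde k$-th coordinate of $\bar{\mathbf x}_1=A_1\mathbf w_1$ equals
\[
(\bar{\mathbf x}_1)_{\tilde k}\;=\;A_1^{\tilde k}\mathbf w_1\;=\;\frac{A_1^{\tilde k}(A_1^{\tilde k})^\top}{\|(A_1^{\tilde k})^\top\|}\;=\;\|A_1^{\tilde k}\|\;>\;0,
\]
so $\bar{\mathbf x}_1\neq 0$, and hence $\mathbf x^0_1\neq 0$ since truncating a nonzero vector to its $r_1$ largest-magnitude entries preserves the largest entry.

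For the inductive step, suppose $A_{j-1}\neq 0$ and $\mathbf x^0_{j-1}\neq 0$. I first need $A_j\neq 0$, i.e., $A_{j-1}^\top\mathbf x^0_{j-1}\neq 0$. This follows from the same inner-product trick used in Proposition \ref{prop:welldefined30}:
\[
\bigl\langle A_{j-1}^\top\mathbf x^0_{j-1},\,\mathbf w_{j-1}\bigr\rangle\;=\;\bigl\langle\mathbf x^0_{j-1},\,A_{j-1}\mathbf w_{j-1}\bigr\rangle\;=\;\bigl\langle\mathbf x^0_{j-1},\,\bar{\mathbf x}_{j-1}\bigr\rangle,
\]
and by construction $\mathbf x^0_{j-1}$ is the normalized truncation of $\bar{\mathbf x}_{j-1}$, so by \eqref{eq:truncation_equivalent} the right-hand side equals $\|\truncatedvector{\bar{\mathbf x}_{j-1}}{r_{j-1}}\|>0$. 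Hence $A_j\neq 0$. The same coordinate computation as in the base case (applied to the maximum-magnitude row of $A_j$) then yields $\bar{\mathbf x}_j\neq 0$ and therefore $\mathbf x^0_j\neq 0$. Iterating up to $j=d-1$ and then applying the inner-product argument one last time to $\bar{\mathbf x}^0_d=\truncatedvector{A_{d-1}^\top\mathbf x^0_{d-1}}{r_d}$ gives $\mathbf x^0_d\neq 0$, completing the induction.

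There is no serious obstacle here; the proof is essentially mechanical once one notices the two recurring tricks (the coordinate lower bound $(\bar{\mathbf x}_j)_{\tilde k}=\|A_j^{\tilde k}\|$ and the inner-product identity with $\mathbf w_{j-1}$). The only minor subtlety worth pointing out explicitly is that truncation preserves nonzeroness, which is why the passage from $\bar{\mathbf x}_j\neq 0$ to $\mathbf x^0_j\neq 0$ (and likewise for $\bar{\mathbf x}^0_d$) is legitimate. Given how closely the argument tracks Proposition \ref{prop:welldefined30}, I anticipate that, consistent with the remark preceding the proposition, a one- or two-line proof suffices or the proof may simply be omitted.
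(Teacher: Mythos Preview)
Your argument is correct and is precisely the natural adaptation of the proof of Proposition~\ref{prop:welldefined30} that the paper has in mind; indeed, the paper omits the proof entirely, stating only that it ``is quite similar to that of Proposition~\ref{prop:welldefined30}.'' Your coordinate observation $(\bar{\mathbf x}_j)_{\tilde k}=\|A_j^{\tilde k}\|>0$ is exactly the extra ingredient needed to replace the singular-vector step, and your inner-product identity $\langle A_{j-1}^\top\mathbf x^0_{j-1},\mathbf w_{j-1}\rangle=\langle\mathbf x^0_{j-1},\bar{\mathbf x}_{j-1}\rangle>0$ mirrors the paper's argument verbatim.
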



The approximation bound analysis  essentially relies on the following lemma. 
\begin{lemma}
	\label{lem:proc4}
	Given $A\in\mathbb R^{m\times n}$, with $A^{\tilde k}$ being the row of $A$ having the largest magnitude. Let $\mathbf w=(A^{\tilde k})^\top/\normalnorm{(A^{\tilde k})^\top}$, $\mathbf x = A\mathbf w$, and $\mathbf x^0 = \truncatedvector{\mathbf x}{r}/\normalnorm{\truncatedvector{\mathbf x}{r}}$ with $1\leq r\leq m$. Then there holds 
	\[
	\bignorm{A^\top\mathbf x^0} \geq \sqrt{\frac{r}{m^2} }\bigfnorm{A}.
	\]
\end{lemma}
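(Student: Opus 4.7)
The plan is to chain together three lower bounds, one for each quantity appearing between $\|A^\top\mathbf x^0\|$ and $\|A\|_F$: first reduce $\|A^\top\mathbf x^0\|$ to an inner product involving $\mathbf x = A\mathbf w$, then reduce $\langle \mathbf x,\mathbf x^0\rangle$ to $\|\mathbf x\|$, and finally bound $\|\mathbf x\|$ from below by $\|A\|_F/\sqrt{m}$ using the fact that $\mathbf w$ is aligned with the largest row.

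First, since $\|\mathbf w\|=1$, Cauchy--Schwarz gives
\[
\|A^\top\mathbf x^0\|\;\geq\;\langle A^\top\mathbf x^0,\mathbf w\rangle\;=\;\langle\mathbf x^0,A\mathbf w\rangle\;=\;\langle\mathbf x^0,\mathbf x\rangle .
\]
Next, Proposition \ref{prop:lower_bound_truncation_times_org} applied to $\mathbf x\in\mathbb R^m$ and its normalized truncation $\mathbf x^0$ yields $\langle\mathbf x,\mathbf x^0\rangle\geq \sqrt{r/m}\,\|\mathbf x\|$, so it only remains to prove $\|\mathbf x\|\geq \|A\|_F/\sqrt{m}$.

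For the last inequality I would exploit that $\mathbf w$ is exactly the unit vector pointing along the largest row $A^{\tilde k}$. A direct calculation shows that the $\tilde k$-th coordinate of $\mathbf x=A\mathbf w$ equals
\[
(A\mathbf w)_{\tilde k}\;=\;A^{\tilde k}\,(A^{\tilde k})^\top/\|A^{\tilde k}\|\;=\;\|A^{\tilde k}\|,
\]
so $\|\mathbf x\|\geq |(A\mathbf w)_{\tilde k}|=\|A^{\tilde k}\|$. Finally, by the defining property of $\tilde k$ (largest row magnitude) and a pigeonhole/averaging argument,
\[
\|A^{\tilde k}\|\;=\;\max_{1\leq k\leq m}\|A^k\|\;\geq\;\sqrt{\tfrac{1}{m}\textstyle\sum_{k=1}^{m}\|A^k\|^2}\;=\;\|A\|_F/\sqrt{m}.
\]
Multiplying the three bounds gives $\|A^\top\mathbf x^0\|\geq \sqrt{r/m}\cdot \|A\|_F/\sqrt{m}=\sqrt{r/m^2}\,\|A\|_F$, as required.

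None of the steps looks genuinely difficult; the only subtlety is recognizing that the choice $\mathbf w=(A^{\tilde k})^\top/\|A^{\tilde k}\|$ is engineered precisely so that the $\tilde k$-th entry of $A\mathbf w$ equals $\|A^{\tilde k}\|$, which is the crucial identity that converts an SVD-free iterate into something still comparable to $\|A\|_F$. Everything else is bookkeeping with Cauchy--Schwarz and the already-established truncation inequality.
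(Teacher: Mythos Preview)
Your proof is correct and follows essentially the same approach as the paper's: both bound $\|A^\top\mathbf x^0\|$ below by $\langle \mathbf x^0,\mathbf x\rangle$ via $\|\mathbf w\|=1$, invoke Proposition~\ref{prop:lower_bound_truncation_times_org} to get the factor $\sqrt{r/m}$, and then use $\|\mathbf x\|\geq\|A^{\tilde k}\|\geq\|A\|_F/\sqrt{m}$. The only cosmetic difference is that the paper writes $\|\mathbf x\|^2=\sum_k\langle A^k,\mathbf w\rangle^2\geq\langle A^{\tilde k},\mathbf w\rangle^2$ while you phrase the same inequality as $\|\mathbf x\|\geq|(A\mathbf w)_{\tilde k}|$.
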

\begin{proof}
	We have
	\begin{eqnarray*}
		\bignorm{A^\top \mathbf x^0} &=& \max_{\normalnorm{\mathbf z}=1 }\nolimits \innerprod{A^\top\mathbf x^0 }{\mathbf z} \\
		&\geq& \innerprod{A^\top \mathbf x^0}{\mathbf w} = \innerprod{\mathbf x}{\mathbf x^0} \\
		&\geq& \sqrt{\frac{r}{m}}\normalnorm{\mathbf x}, 
	\end{eqnarray*}
	where the second inequality follows from that $\mathbf w$ is normalized, and the   last one comes from Proposition \ref{prop:lower_bound_truncation_times_org}. We also have from the definition of $\mathbf x$ that
	\begin{eqnarray*}
		\normalnorm{\mathbf x}^2 &=& \sum^m_{k=1}\nolimits  ({A^k\mathbf w})^2 \geq  ({A^{\tilde k}\mathbf w})^2 \\
		&=& \innerprod{A^{\tilde k}}{\frac{A^{\tilde k}}{\normalnorm{A^{\tilde k}}}}^2  = \bignorm{A^{\tilde k}}^2 \\
		&\geq& \frac{1}{m}\bigfnorm{A}^2,
	\end{eqnarray*}
	where the last inequality comes from the definition of $A^{\tilde k}$. Combining the above analysis gives the desired result.  
\end{proof}
\begin{theorem}
	\label{th:init_theory_bound_4_general}
	Let $(\mathbf x^0_1,\ldots,\mathbf x^0_d)$ be generated by Algorithm \ref{proc:init4_order_d}. Then it holds that
	\begin{small}
		\[
		\mathcal A\mathbf x^0_1\cdots\mathbf x^0_d\geq \sqrt{\frac{\prod^d_{j=1}r_j}{\prod^d_{j=1}n_j}}\frac{\bigfnorm{\mathcal A}}{\sqrt{\prod^{d-1}_{j=1}n_j}}  \geq  \sqrt{\frac{\prod^d_{j=1}r_j}{\prod^d_{j=1}n_j}}\frac{v^{\rm opt}}{\sqrt{\prod^{d-1}_{j=1}n_j}} .
		\]
	\end{small}
\end{theorem}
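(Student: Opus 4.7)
The plan is to chain together the sparsification loss at each mode, exactly mirroring the proof of Theorem~\ref{th:init_theory_bound_3_general} but replacing each use of Lemma~\ref{lem:lower_bound_lem:1} by Lemma~\ref{lem:proc4}. The bound we gain per mode is now $\sqrt{r_j/n_j^2}$ (from Lemma~\ref{lem:proc4}) instead of $\sqrt{r_j/n_j^2}$ via a spectral norm detour, so the arithmetic ends up essentially identical, only the matrix that is being bounded on the right is $\|A_1\|_F=\|\mathcal A\|_F$ rather than $\lambda_{\max}(A_1)$.

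First I would record the identity
\[
\mathcal A\mathbf x^0_1\cdots\mathbf x^0_d \;=\; \langle A_{d-1}^\top\mathbf x^0_{d-1},\mathbf x^0_d\rangle,
\]
which is the analogue of \eqref{eq:th:lower_bound:5} and follows from Remark~\ref{rmk:approx_3} (the same reshaping identity holds for Algorithm~\ref{proc:init4_order_d} since $A_j$ is defined by the same recursion). Applying Proposition~\ref{prop:lower_bound_truncation_times_org} to $\mathbf x^0_d = \truncatedvector{A_{d-1}^\top\mathbf x^0_{d-1}}{r_d}/\|\cdot\|$ gives
\[
\mathcal A\mathbf x^0_1\cdots\mathbf x^0_d \;\geq\; \sqrt{r_d/n_d}\,\bignorm{A_{d-1}^\top\mathbf x^0_{d-1}}.
\]

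Next I would apply Lemma~\ref{lem:proc4} to the matrix $A_j\in\mathbb R^{n_j\times\prod_{k>j}n_k}$ for each $j=1,\ldots,d-1$; its hypothesis is exactly what steps~1--2 of Algorithm~\ref{proc:init4_order_d} produce. This yields
\[
\bignorm{A_j^\top\mathbf x^0_j} \;\geq\; \sqrt{r_j/n_j^2}\,\bigfnorm{A_j}.
\]
The crucial observation is that reshaping preserves the Frobenius norm, so for $j\geq 2$,
\[
\bigfnorm{A_j} \;=\; \bignorm{A_{j-1}^\top\mathbf x^0_{j-1}},
\]
while for $j=1$, $\bigfnorm{A_1}=\bigfnorm{\mathcal A}$. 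Iterating the inequality down from $j=d-1$ to $j=1$ then telescopes to
\[
\bignorm{A_{d-1}^\top\mathbf x^0_{d-1}} \;\geq\; \prod_{j=1}^{d-1}\sqrt{r_j/n_j^2}\cdot \bigfnorm{\mathcal A} \;=\; \frac{\sqrt{\prod_{j=1}^{d-1} r_j}}{\prod_{j=1}^{d-1} n_j}\,\bigfnorm{\mathcal A}.
\]
Substituting this into the earlier inequality and regrouping the $n_j$'s (splitting $\prod_{j=1}^{d-1} n_j^2 = \prod_{j=1}^{d-1} n_j \cdot \prod_{j=1}^{d-1} n_j$ and pulling out $\prod_{j=1}^d n_j$) gives the first claimed bound in terms of $\|\mathcal A\|_F$.

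For the second inequality, the estimate $\bigfnorm{\mathcal A}\geq v^{\rm opt}$ is immediate from Cauchy--Schwarz: for any feasible $(\mathbf x_1,\ldots,\mathbf x_d)$, $|\mathcal A\mathbf x_1\cdots\mathbf x_d| = |\langle\mathcal A,\mathbf x_1\circ\cdots\circ\mathbf x_d\rangle|\leq\bigfnorm{\mathcal A}\cdot\prod_j\bignorm{\mathbf x_j}=\bigfnorm{\mathcal A}$. I do not anticipate any real obstacle here; the only point that deserves care is verifying the Frobenius-norm recursion $\bigfnorm{A_j}=\bignorm{A_{j-1}^\top\mathbf x^0_{j-1}}$, which is the step where the sparsification loss from the previous mode must propagate correctly into the bound for the current mode.
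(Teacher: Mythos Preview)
Your proposal is correct and follows essentially the same route as the paper's proof: start from $\mathcal A\mathbf x^0_1\cdots\mathbf x^0_d\geq\sqrt{r_d/n_d}\,\|A_{d-1}^\top\mathbf x^0_{d-1}\|$ via Proposition~\ref{prop:lower_bound_truncation_times_org}, then apply Lemma~\ref{lem:proc4} at each mode $j=1,\ldots,d-1$ together with the reshaping identity $\|A_j\|_F=\|A_{j-1}^\top\mathbf x^0_{j-1}\|$ to telescope down to $\|A_1\|_F=\|\mathcal A\|_F$. The only cosmetic difference is that the paper obtains $\|\mathcal A\|_F\geq v^{\rm opt}$ via $\|\mathcal A\|_F\geq\lambda_{\max}(A_1)\geq v^{\rm opt}$, whereas you invoke Cauchy--Schwarz directly; both are immediate.
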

\begin{proof}
	As the discussions above Theorem \ref{th:init_theory_bound_3_general} we get $\mathcal A\mathbf x^0_1\cdots\mathbf x^0_d \geq \sqrt{ \frac{r_d}{n_d}  }\bignorm{A^\top_{d-1} \mathbf x^0_{d-1} }$. Using Lemma \ref{lem:proc4}, for $2\leq j\leq d-1$, we have
	\[
	\bignorm{A^\top_{j}\mathbf x^0_{j}} \geq \sqrt{\frac{r_{j}}{n_{j}^2}}\bigfnorm{ A_{j} } = \sqrt{\frac{r_{j}}{n_{j}^2}} \bignorm{A^\top_{j-1}\mathbf x^0_{j-1}}. 
	\]
	In particular, from step 1 of the algorithm, $\bignorm{A^\top_1\mathbf x^0_1} \geq \sqrt{\frac{r_1}{n_1^2}}\bigfnorm{A_1} = \sqrt{\frac{r_1}{n_1^2}}\bigfnorm{\mathcal A}$. It is clear that $\bigfnorm{\mathcal A} \geq \lambda_{\max}(A_1) \geq v^{\rm opt}$. Combining the analysis gives the desired results.   
\end{proof}

Before ending this section, we summarize the approximation ratio and computational complexity of the proposed algorithms in Table \ref{tab:approx_alg_summary}. For convenience we set $r_1=\cdots=r_d$ and $n_1=\cdots = n_d$.

\begin{table}[htbp]
	\centering
	\caption{\footnotesize   Comparisons of the proposed approximation algorithms   on the approximation bound and computational complexity.}
	\begin{mytabular1}{ccc}
		\toprule
		\multicolumn{1}{c}{Algorithm} & Approximation bound &  Computational complexity  \\
		\toprule
		Algorithm	\ref{proc:init1_order_d} 	& $\frac{v^{\rm opt}}{\sqrt{ r^{d-1} }}$   &	$O(n^d\log(n) + dn\log(n) )$ \\
		\midrule
		Algorithm	\ref{proc:init2_order_d} 	& $\sqrt{\frac{r^2}{n^2}}\frac{v^{\rm opt}}{\sqrt{r^{d-2}}}$ &	$O(n^{d+1} + dn\log(n))$ \\
		\midrule
		Algorithm	\ref{proc:init3_order_d} 	& $\sqrt{\frac{r^d}{n^d}} \frac{\lambda_{\max}(A_1)}{\sqrt{n^{d-2}}}$ &	$O(n^{d+1} + dn\log(n))$ \\
		\midrule
		Algorithm	\ref{proc:init4_order_d} 	& $\sqrt{\frac{r^d}{n^d}} \frac{ \bigfnorm{\mathcal A}  }{\sqrt{n^{d-1}}}$ &	$O(n^{d} + dn\log(n))$ \\
		\bottomrule
	\end{mytabular1}%
	\label{tab:approx_alg_summary}%
\end{table}%

Concerning the approximation ratio $\mathcal A\mathbf x^0_1\cdots\mathbf x^0_d/v^{\rm opt}$, we see that if $r$ is a constant, then  the ratio of Algorithm \ref{proc:init1_order_d} is also a constant, while those of other algorithms  rely on $n$. This is the advantage of Algorithm \ref{proc:init1_order_d}, compared with other algorithms. 
When $d=2$, the ratios of Algorithms \ref{proc:init1_order_d} and \ref{proc:init2_order_d} are respectively $1/\sqrt{r}$ and $r/n$, which coincide with their matrix counterparts; see, \cite[Algorithms 1 and 2]{chan2016approximability}. We also observe that Algorithm \ref{proc:init2_order_d} generalizes the approximation algorithm and bound for dense tensor BR1Approx in \cite[Theorem 5.1]{zhang2012cubic}: If sparsity is not required, i.e., $r=n$, the bound of Algorithm \ref{proc:init2_order_d}   boils down to those of  \cite[Theorem 5.1]{zhang2012cubic}. 
For Algorithm \ref{proc:init3_order_d},  when $r=n$, or $r$ is proportional to $n$ up to a constant,   the ratio   reduces to $ O(1/\sqrt{ n^{d-2}})$, which recovers that of \cite[Algorithm 1]{he2010approximation}. 
Although the ratio of Algorithm \ref{proc:init4_order_d} is worse than that of Algorithm \ref{proc:init3_order_d} with an additional factor $1/\sqrt{n}$, we shall also observe that the numerator of the bound of Algorithm \ref{proc:init4_order_d} is $\bigfnorm{\mathcal A}$, while that of Algorithm \ref{proc:init3_order_d} is $\lambda_{\max}(A_1)$ (see Algorithm \ref{proc:init3_order_d} for the definition of $A_1$), where the latter is usually much smaller than the former. 
Note that two randomized approximation algorithms as initializations were proposed in  \cite[Algorithms 3 and 4]{sun2017provable} (see its arXiv version), where the performance analysis was considered on structured tensors. It would be also interesting to study their approximation bound for general tensors. Concerning the computational complexity, we see that Algorithm \ref{proc:init4_order_d} admits the lowest one in theory.  
This is also confirmed by our numerical observations that will be presented in the next section. Note that if the involved singular value problems in Algorithm \ref{proc:init3_order_d} are solved by the Lanczos algorithm at $k$ steps \cite{comon1990tracking} where $k$ is a user-defined parameter, then the computational complexity is $O(k n^d + dn\log(n))$. 

\color{black}
Finally,  We discuss that whether the approximation bounds are achieved. We only consider the cases that $r_j<n_j$. We first consider Algorithm \ref{proc:init3_order_d} and take $d=3$ as an example.  
In  \eqref{eq:th:lower_bound:8}, achieving $\lambda_{\max}(A_2)=\frac{\|A_2\|_F}{\sqrt{n_2}}$ requires that ${\rm rank}(A_2)=n_2$ (assuming that $n_2\leq n_3$) and all the singular values are the same. If these are true, then $\| A_2^{\top}\mathbf x_2^0\| > \sqrt{ \frac{r_2}{n_2} }\lambda_{\max}(A_2)$.\footnote{Otherwise, from the analysis of Lemma \ref{lem:lower_bound_lem:1},  $\| A_2^{\top} {\mathbf x_2^0}\| = \sqrt{  \frac{r_2}{n_2} }\lambda_{\max}(A_2)$ if and only if  1) $A_2^{\top}\bar{\mathbf x}_2 = \alpha A_2^{\top}\mathbf x_2^0$ for some $\alpha \in\mathbb R$,  and 2) $\innerprod{\bar{\mathbf x}_2}{\mathbf x_2^0} = \sqrt{ \frac{r_2}{n_2}  }$. 2) holds if and only if every entry of $\bar{\mathbf x}_2$ takes the same value, which together with $r_2<n_2$ implies that $\bar{\mathbf x}_2-\alpha \mathbf x_2^0\neq 0$; however, this and 1) lead to ${\rm rank}(A_2)<n_2$, deducing a contradiction.} Thus the bound of Algorithm \ref{proc:init3_order_d} may not be tight. 
The approximation ratio of Algorithm \ref{proc:init4_order_d} relies on Lemma \ref{lem:proc4}. However, there do not exist matrices achieving the approximation ratio $\sqrt{\frac{r}{m^2}}$ in Lemma \ref{lem:proc4}, implying that the   bound of Algorithm \ref{proc:init4_order_d} may not be tight. 
For Algorithm \ref{proc:init1_order_d}, if step 3 is not executed (use $\mathbf e_1^{\bar i_1}, \ldots, \mathbf e_{d-1}^{\bar i_{d-1}}, \mathbf x_d^0$ obtained in step 2 as the output), then the bound is tight: Consider $\mathcal A\in \mathbb R^{n_1\times n_2\times n_3}$ with $n_1=n_2=n_3=n$ as an all-one tensor and let $1< r_1=r_2=r_3 = r<n$; then $v^{\rm opt} = r^{\frac{3}{2}}$. The generated feasible point is $\mathbf x^0_j = \mathbf e^1_j$, $j=1,2$, and $x^0_3$ is the vector whose first $r$ entries are $\frac{1}{\sqrt r}$ and the remaining ones are zero.   Then $\mathcal A\mathbf x^0_1\mathbf x^0_2\mathbf x^0_3/v^{\rm opt} = \frac{1}{r}$, achieving the bound.  For Algorithm \ref{proc:init2_order_d}, in case that $d=3$ and if $\mathbf x_1^0$ is not updated by step 3  (use $\mathbf e_1^{\bar i_1}$ obtained in step 2 as the output), then the bound can be tight: Consider $\mathcal A\in \mathbb R^{4\times 4\times 4}$, with $\mathcal A(i,:,:) = A:= \left[\begin{smallmatrix}
	0&1&0&1\\
	0&1&0&1\\
	1& 0&1&0\\
	1& 0&1&0\\
\end{smallmatrix}\right]$, $i=1,\ldots,4$, and let $r_1=r_2=r_3=2$. Then $v^{\rm opt}= 2\sqrt 2$.\footnote{Due to the structure of $\mathcal A$, $\mathcal A\mathbf x_1\mathbf x_2\mathbf x_3 =  \innerprod{\mathbf e}{\mathbf x_1}\cdot\mathbf x_2^{\top}A\mathbf x_3$ with $\mathbf e = [1~1~1~1]^{\top}$. It is clear that $\mathbf x_1^*=\frac{1}{\sqrt 2}[ 1~ 1~ 0~ 0]^{\top}$. Denote $\mathbf x_2^*:=\mathbf x_1^*$, and $\mathbf x_3^*:=\frac{1}{\sqrt 2}[0~1~0~1]^{\top}$. Since $(\mathbf x_2^*)^{\top}A\mathbf x_3^* = \lambda_{\max}(A) = 2$, $(\mathbf x_2^*,\mathbf x_3^*)$ is a maximizer to $\max_{\|\mathbf x_i\|=1,\|\mathbf x_i\|_0\leq 2,i=2,3}\mathbf x_2^{\top}A\mathbf x_3$, and so $(\mathbf x_1^*,\mathbf x_2^*,\mathbf x_3^*)$ is a maximizer to \eqref{prob:str1approx_org_max}, with $\mathcal A\mathbf x_1^*\mathbf x_2^*\mathbf x_3^*=2\sqrt 2$.} Applying Algorithm \ref{proc:init2_order_d} to $\mathcal A$ yields $\bar i_1=1$, with $\bar{\mathbf x}_3=[1~1~1~1]^{\top}/2$;  then $\mathbf x_3^0= [1~1~0~0]^{\top}/\sqrt 2$, and $\mathbf x_2^0=\mathbf x_3^0$. Finally,    $ {\mathcal A\mathbf e_1^1\mathbf x_2^0\mathbf x_3^0 }/{v^{\rm opt}}= \frac{1}{2\sqrt 2} = \sqrt{\frac{r^2}{n^2 r}}$. 
In summary, the reason that why the bounds cannot be achieved is that although we derive the worst-case inequalities in every step of the analysis, after putting them together, the final approximation bounds might not be tight. How to improve them still need further research.

\color{black}

\section{Numerical Experiments}\label{sec:numer_experiments}
We evaluate the proposed algorithms in this section on synthetic and real data.  
All the   computations are conducted on an Intel i7  CPU desktop computer with 16 GB of RAM. The supporting software is Matlab R2019a.   Tensorlab  \cite{tensorlab2013} is employed for basic tensor operations.   

\paragraph{Performance of approximation algorithms}
We first compare Algorithms \ref{proc:init1_order_d}, \ref{proc:init2_order_d},   \ref{proc:init3_order_d}, and \ref{proc:init4_order_d} on solving \eqref{prob:str1approx_org_max}. The tensor is given by 
\begin{equation}\label{exp:tr1a}
	\mathcal A = \sum^R_{i=1}\nolimits\mathbf x_{1,i}\circ\cdots\circ\mathbf x_{d,i} \in\T,
\end{equation}
where $\mathbf x_{j,i}\in\mathbb R^{n_j}$, $i=1,\ldots,R$, $j=1,\ldots,d$, and we let $R=10$. Here the vectors are first randomly drawn from the normal distribution, and then $sr = 70\%$ of the entries are randomly set to be zero.  We set $d=3,4$, and  let $n_1=\cdots=n_d=n$ with $n$ varying  from $5$ to $100$. For each case, we randomly generated $50$ instances, and the   averaged results are presented. $r_j = \lfloor (1-sr)n_j\rfloor$ for each $j$ in \eqref{prob:str1approx_org_max}. As a baseline, we also randomly generate feasible points $(\mathbf x^{\rm random}_1,\ldots,\mathbf x^{\rm random}_d)$ and evaluate its performance. On the other hand, we easily see that
\begin{equation}\label{eq:upper_bound}
	v^{\rm ub} := \min\{\lambda_{\max}(A_{(1)}),\ldots,\lambda_{\max}(A_{(d)})  \}
\end{equation}
is an upper bound for problem \eqref{prob:str1approx_org_max}, where $A_{(j)} = \texttt{reshape}(\mathcal A,n_j,\prod^d_{k\neq j}n_k)$ denotes the $j$-mode unfolding of $\mathcal A$. Thus we also evaluate $v^{\rm ub}$. 
The results are depicted in Fig. \ref{fig:tr1a}, where the left panels show the curves of the objective values $\mathcal A\mathbf x^{0}_1\cdots\mathbf x^0_d$ versus $n$ of different algorithms, whose colors are respectively   cyan (Alg. \ref{proc:init1_order_d}), magenta (Alg. \ref{proc:init2_order_d}),   blue (Alg. \ref{proc:init3_order_d}), and red (Alg. \ref{proc:init4_order_d}); the curves of the random value $\mathcal A\mathbf x^{\rm random}_1\cdots\mathbf x^{\rm random}_d$ is  in   black with hexagram markers, while   the curve of the upper bounds $v^{\rm ub}$ is in             black with diamond markers. The right ones plot the curve of CPU time versus $n$.

\begin{figure}[h] 
	\centering
	\subfigure[Objective value and CPU time when $d=3$.]{
		\label{fig:d3_approx} 
		\includegraphics[height=3.5cm,width=13cm]{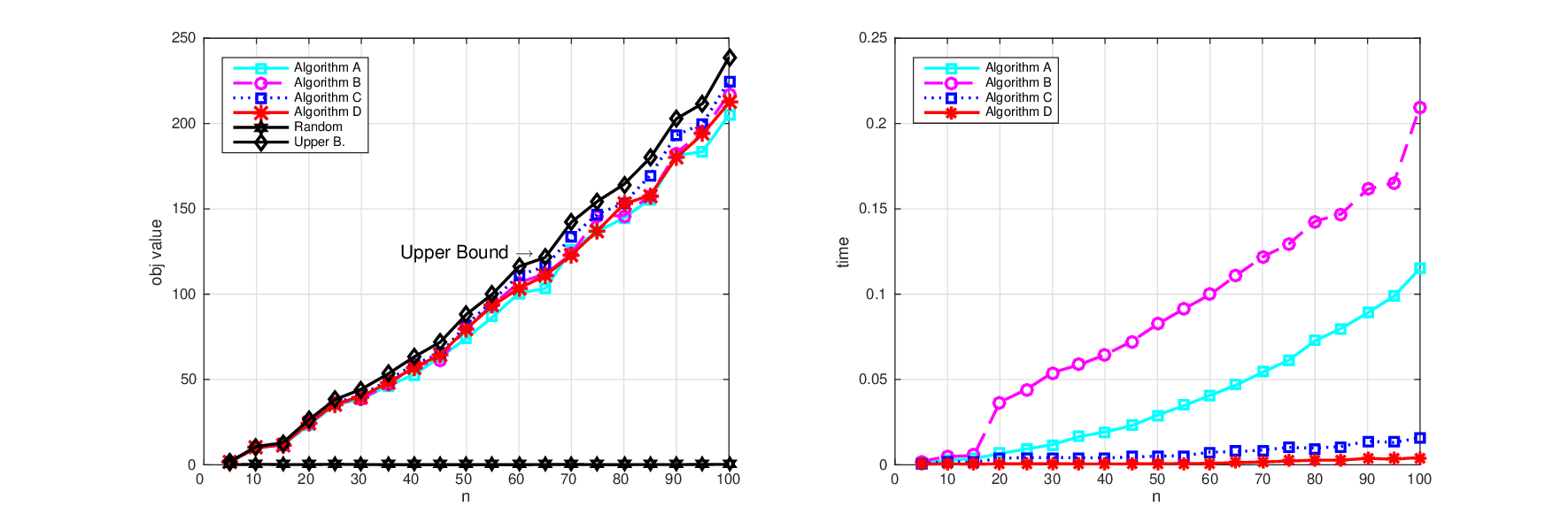}}\\
	\subfigure[Objective value and CPU time  when $d=4$.]{
		\label{fig:d4_approx}
		\includegraphics[height=3.5cm,width=13cm]{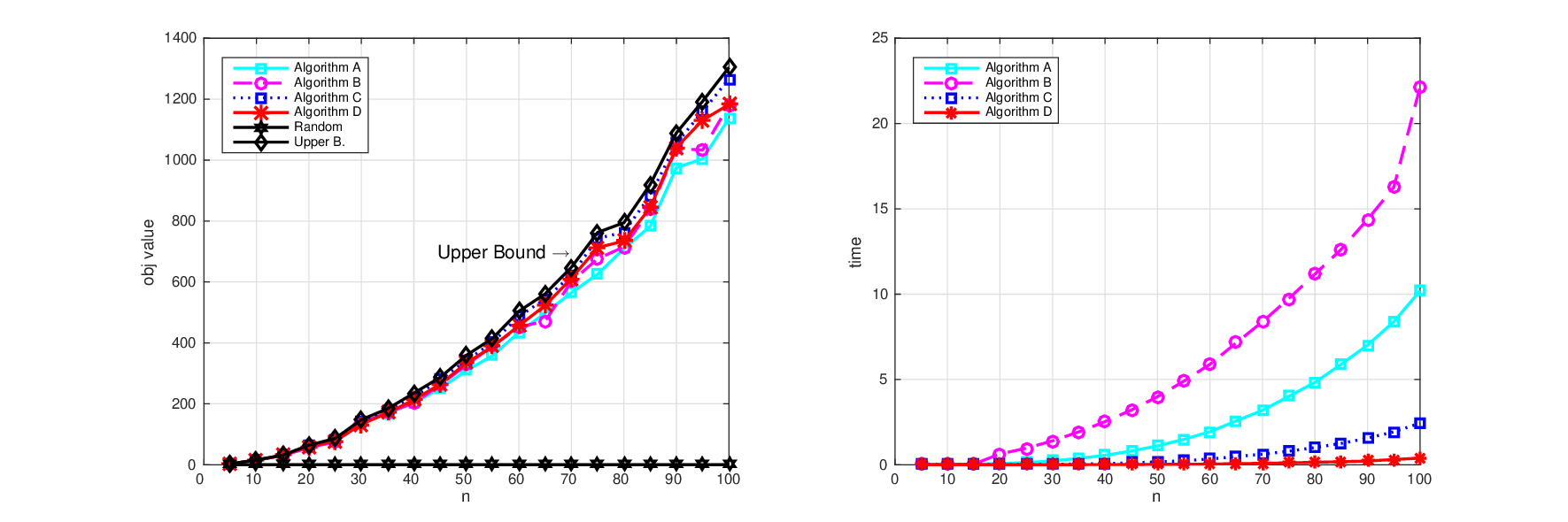}}\\
	\caption{Comparisons of Algorithms \ref{proc:init1_order_d}, \ref{proc:init2_order_d},   \ref{proc:init3_order_d}, and \ref{proc:init4_order_d} for solving \eqref{prob:str1approx_org_max} where $\mathcal A$ is given by \eqref{exp:tr1a}. $n$ varies from $5$ to $100$. Left panels: objective value $\mathcal A\mathbf x^0_1\cdots\mathbf x^0_d$ versus $n$; right panels: CPU time.} 
	
	\label{fig:tr1a} 
\end{figure}

From the left panels, we observe that the objective values  generated by Algorithms \ref{proc:init1_order_d}, \ref{proc:init2_order_d},   \ref{proc:init3_order_d}, and \ref{proc:init4_order_d} are similar, where Algorithm \ref{proc:init3_order_d} performs better; Algorithm \ref{proc:init4_order_d} performs the second when $d=4$, and it is comparable with Algorithm \ref{proc:init2_order_d} when $d=3$; Algorithm \ref{proc:init1_order_d} gives the worst results, which may be that Algorithm \ref{proc:init1_order_d} does not explore the structure of the problem as much as possible. We also observe that the objective values of all the   algorithms are quite close to the upper bound \eqref{eq:upper_bound}, which demonstrates the effectiveness of the proposed algorithms. In fact, the ratio of $\frac{\mathcal A\mathbf x^0_1\cdots\mathbf x^0_d}{v^{\rm ub}}$ is in $(0.7,1)$, which is far better than the 
approximation ratios presented in Sect. \ref{sec:approx_alg}. This implies that at least for this kind of tensors, the approximation ratios might be independent of the size of the tensor.
The   value   $\mathcal A\mathbf x^{\rm random}_1\cdots\mathbf x^{\rm random}_d$ is close to zero (the curve almost coincides with the $x$-axis).  Concerning the computational time, Algorithms \ref{proc:init4_order_d} is the most efficient one, confirming the theoretical results in Table \ref{tab:approx_alg_summary}.   Algorithm \ref{proc:init3_order_d} is the second efficient one. Algorithms \ref{proc:init1_order_d} and \ref{proc:init2_order_d} do not perform well compared with Algorithms \ref{proc:init3_order_d} and  \ref{proc:init4_order_d}, although their computational complexity is similar in theory. The reason may be because the first two algorithms require for-loop operations, which is  time-consuming in Matlab. 

\begin{figure}[h] 
	\centering
	\subfigure[Objective value and CPU time when $d=3$.]{
		\label{fig:d3_approx_sparsity} 
		\includegraphics[height=3.5cm,width=13cm]{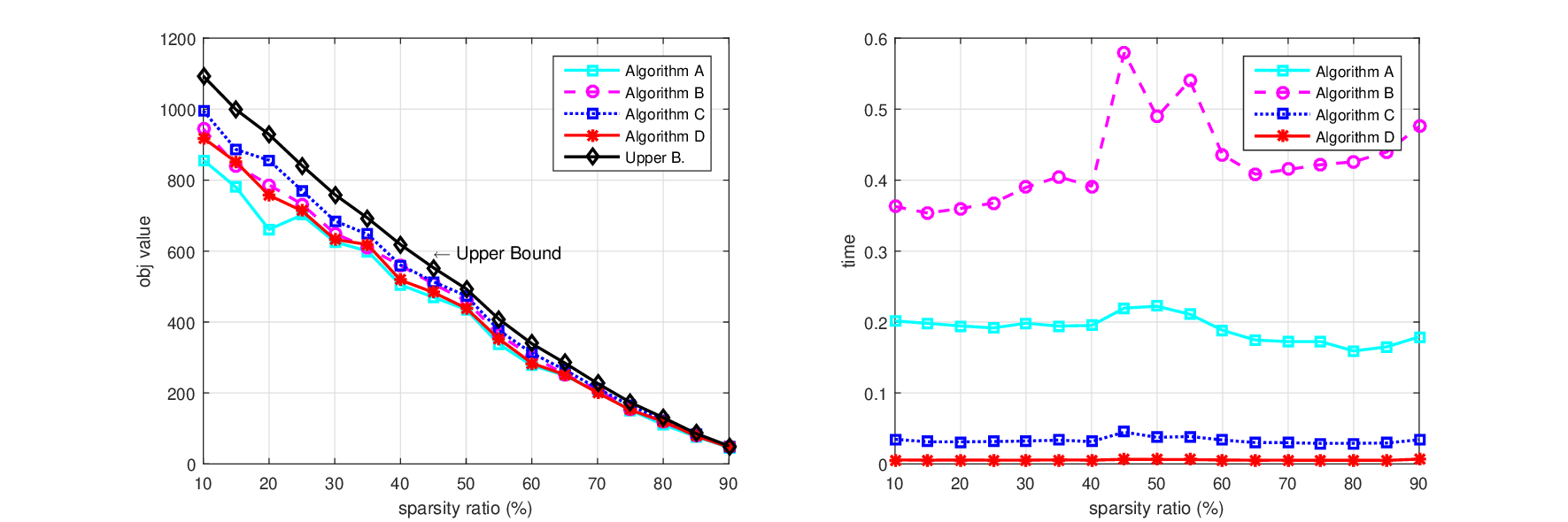}}\\
	\subfigure[Objective value and CPU time when $d=4$.]{
		\label{fig:d4_approx_sparsity}
		\includegraphics[height=3.5cm,width=13cm]{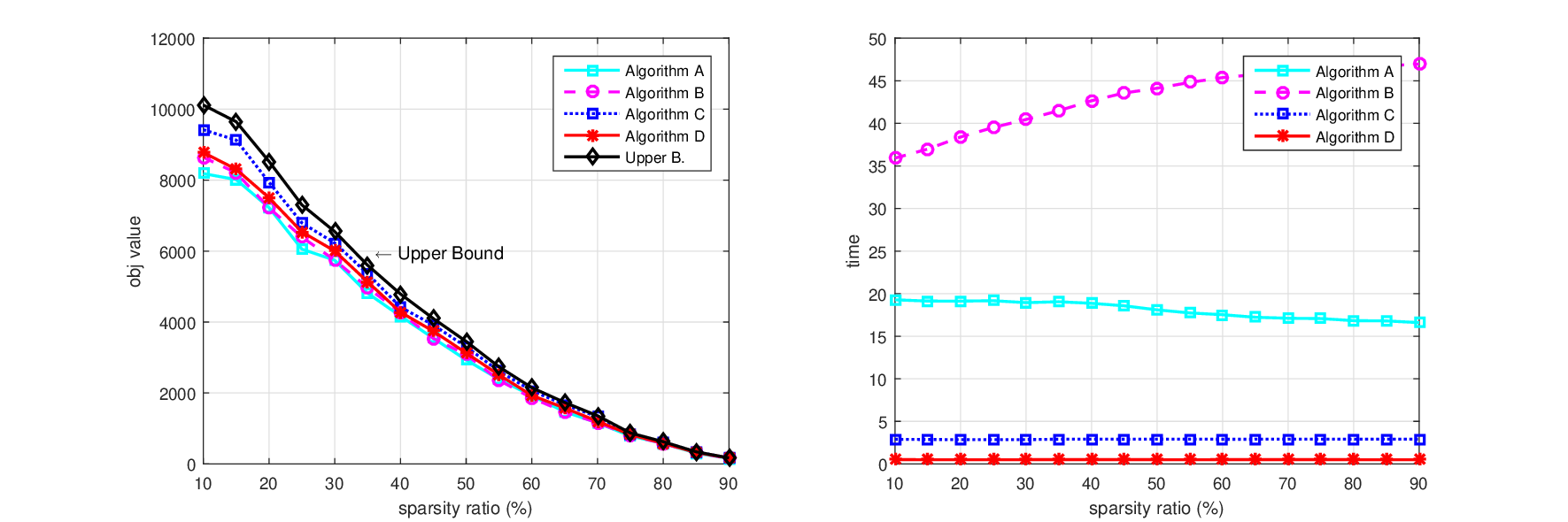}}\\
	\caption{Comparisons of Algorithms \ref{proc:init1_order_d}, \ref{proc:init2_order_d},   \ref{proc:init3_order_d}, and \ref{proc:init4_order_d} for solving \eqref{prob:str1approx_org_max} with fixed $n$ and   sparsity ratio  varying    from $10\%$ to $90\%$. Fig. \ref{fig:d3_approx_sparsity}: $\mathcal A\in\mathbb R^{100\times 100\times 100}$; Fig. \ref{fig:d4_approx_sparsity}: $\mathcal A\in\mathbb R^{100\times 100\times 100\times 100}$.   Left panels: the objective values $\mathcal A\mathbf x^0_1\cdots\mathbf x^0_d$ versus sparsity ratio; right panels: CPU time.} 
	
	\label{fig:sparsity} 
\end{figure}

We then consider fix $n=100$ and vary the sparsity ratio $sr$ from $10\%$ to $90\%$ of $\mathcal A$ in \eqref{exp:tr1a}, and compare the performance of the four proposed algorithms. $r_j$ in \eqref{prob:str1approx_org_max} is   set  to $\lfloor (1-sr) n_j\rfloor$ correspondingly. The results of the objective values $\mathcal A\mathbf x^0_1\cdots \mathbf x^0_d$    together with the upper bound are depicted in the left panels of Fig. \ref{fig:sparsity}, from which we still observe that all the algorithms are close to the upper bound \eqref{eq:upper_bound}; among them, Algorithm \ref{proc:init3_order_d} is still slightly better than the other three, followed by Algorithms \ref{proc:init2_order_d} and \ref{proc:init4_order_d}. The CPU time is plotted in the right panels of Fig. \ref{fig:sparsity}, which still shows that Algorithm \ref{proc:init4_order_d} is the most efficient one. In fact, in average, Algorithm \ref{proc:init4_order_d} is $5$ times faster than Algorithm \ref{proc:init3_order_d}, which ranks the  second, and is about $80$ times faster than Algorithm \ref{proc:init2_order_d}, which is the slowest one.

\color{black}

Overall, comparing with Algorithms \ref{proc:init1_order_d} and \ref{proc:init2_order_d} that find the solutions fiber by fiber, or slide by slide, Algorithm \ref{proc:init3_order_d}  admits hierarchical structures that take the whole tensor into account, and so it   can explore the structure of the data tensor better. 
This may be the reason why Algorithm \ref{proc:init3_order_d} is better than Algorithms \ref{proc:init1_order_d} and \ref{proc:init2_order_d} in terms of the effectiveness. When compared with Algorithm \ref{proc:init4_order_d}, Algorithm \ref{proc:init3_order_d} computes each $\mathbf x^0_j$ ``optimally'' via SVD, while Algorithm \ref{proc:init4_order_d} computes $\mathbf x^0_j$ ``sub-optimally'' but more efficiently; this explains why Algorithms \ref{proc:init3_order_d} performs better than Algorithm \ref{proc:init4_order_d}. Concerning the efficiency, Algorithms \ref{proc:init1_order_d} and \ref{proc:init2_order_d} require for-loop operations, which is known to be slow in Matlab. This leads to that although the algorithms have similar computational complexity in theory (Algorithms \ref{proc:init2_order_d} and \ref{proc:init3_order_d}), after implementation, their performances are quite different.  

\color{black}
%

\begin{figure}[h] 
\centering
\subfigure[Objective value and CPU time  when $d=3$.]{
	\label{fig:d3_iterative} 
	\includegraphics[height=2.5cm,width=13cm]{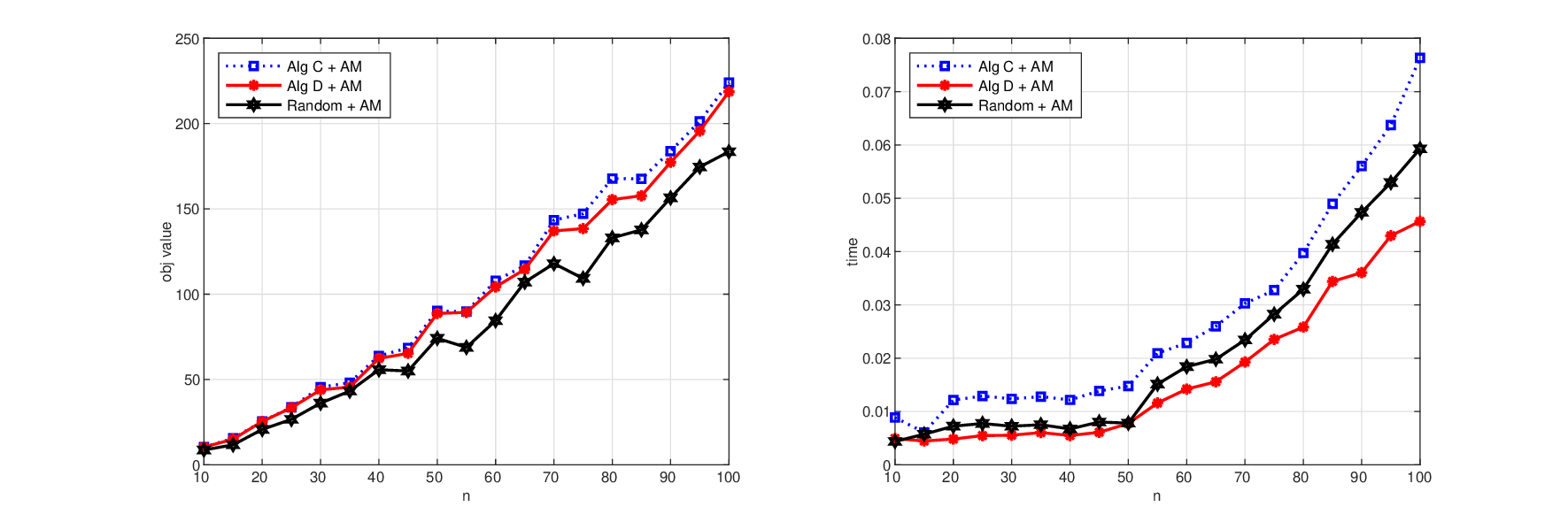}}\\
\subfigure[Objective value and CPU time  when $d=4$.]{
	\label{fig:d4_iterative}
	\includegraphics[height=2.5cm,width=13cm]{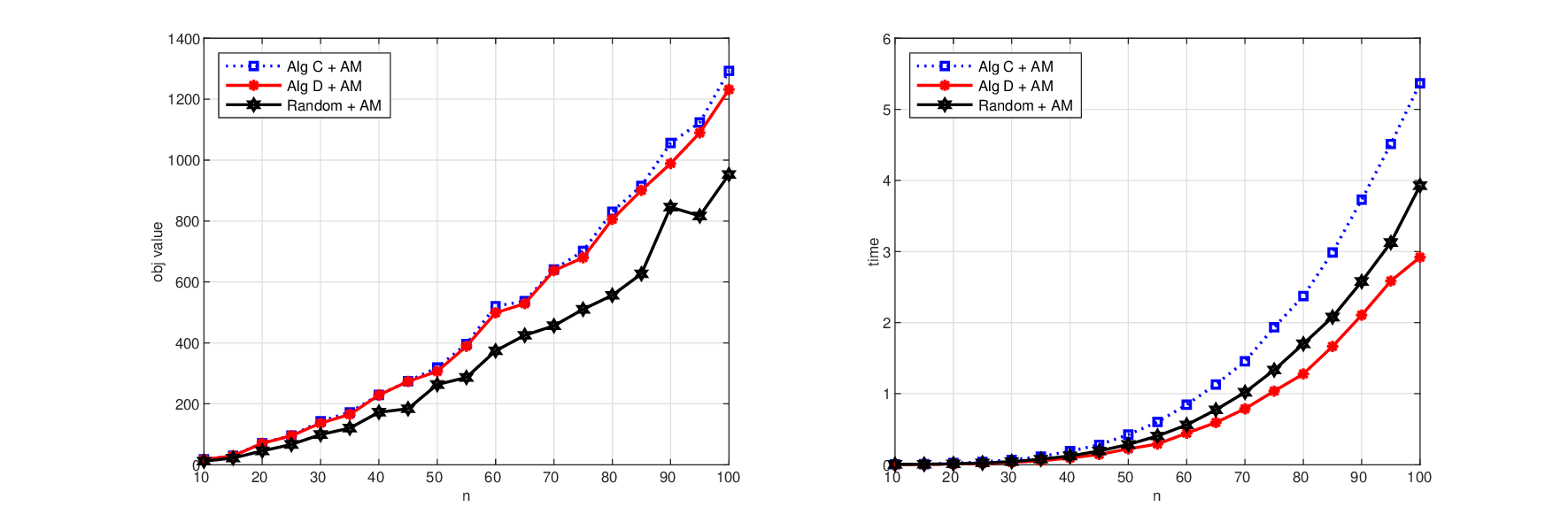}}\\
\subfigure[Number of iterations. Left: $d=3$; right: $d=4$.]{
	\label{fig:d3d4_iterative_iterations}
	\includegraphics[height=2.5cm,width=13cm]{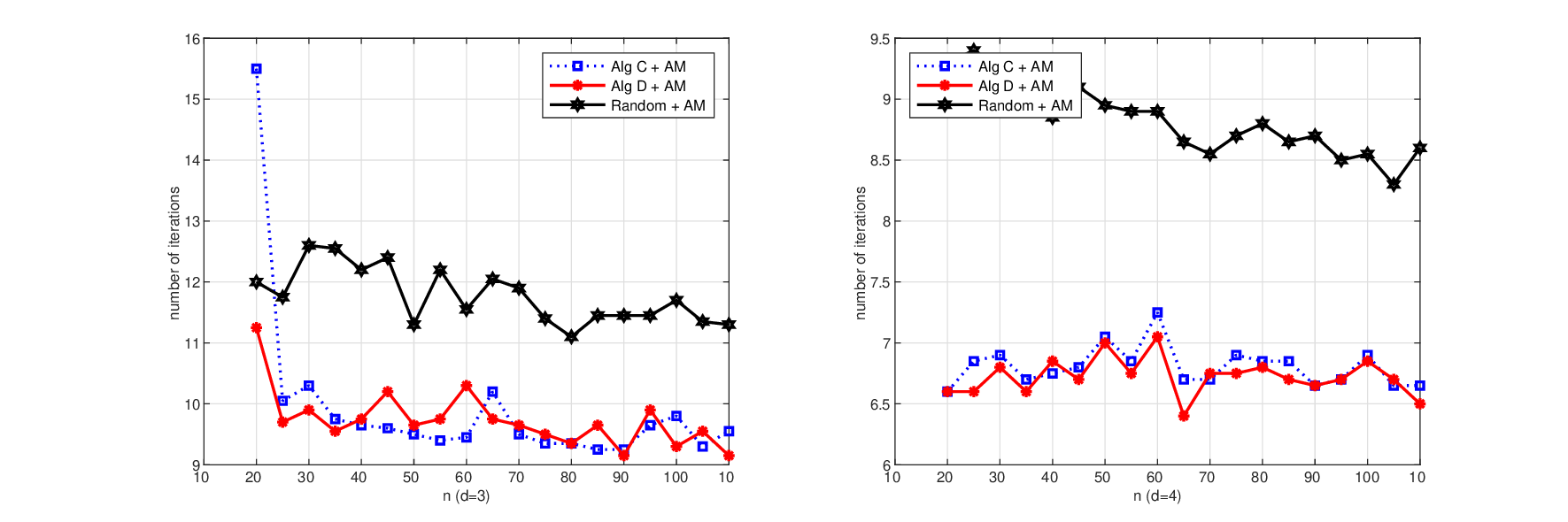}}\\
\caption{Performances of  Alg. \ref{proc:init3_order_d} + AM, Alg. \ref{proc:init4_order_d} + AM and Random + AM for solving \eqref{prob:str1approx_org_max} where $\mathcal A$ is given by \eqref{exp:tr1a}. $n$ varies from $10$ to $100$. The CPU time counts both that of approximation algorithms and  AM. } 

\label{fig:tr1a_iterative} 
\end{figure}

\paragraph{Performance of approximation plus iterative algorithms}
In this part, we first use approximation algorithms to generate $(\mathbf x^0_1,\ldots,\mathbf x^0_d)$, and then use it as an initializer for iterative algorithms. The goal is to see if approximation algorithms can help in improving the solution quality of iterative algorithms. The iterative algorithm used for solving problem \eqref{prob:str1approx_org_max} is simply an alternating maximization method (termed AM in the sequel) with the scheme
\begin{equation*}
\boxed{  
	{\rm (AM)}~~~~	  \mathbf x^{k+1}_j \in \arg\max \nolimits~  {\mathcal A}{\mathbf x_1^{k+1}\cdots \mathbf x^{k+1}_{j-1}\mathbf x_j \mathbf x^k_{j+1} \cdots \mathbf x_d^k}~{\rm s.t.}~ \bignorm{\mathbf x_j}=1, \bigzeronorm{\mathbf x_j}\leq r_j}
\end{equation*}
for $j=1,\ldots,d$ and $k=1,2,\ldots$. The stopping criterion used for AM is $\max_j\{\bignorm{\mathbf x_j^{k+1} -\mathbf x_j^k}    \}\leq 10^{-5}$, or $k\geq 2000$. We employ Algorithms \ref{proc:init3_order_d} and \ref{proc:init4_order_d} in this part, and denote the approximation plus iterative algorithms as Alg. \ref{proc:init3_order_d} + AM and \ref{proc:init4_order_d} + AM in the sequel. As a baseline, we also evaluate AM initialized by randomly generated feasible point $(\mathbf x^{\rm random}_1,\ldots,\mathbf x^{\rm random}_d)$, which is generated the same as the previous part. This algorithm is denoted as Random + AM. 
The data tensors are also \eqref{exp:tr1a}, where $50$ instances are randomly generated for each $n$. 
$r_j=\lfloor 0.3n_j\rfloor$. The objective values $\mathcal A\mathbf x^{\rm out}_1\cdots\mathbf x^{\rm out}_d$ and CPU time for third- and fourth-order tensors with $n$ varying from $10$ to $100$ are plotted in Fig. \ref{fig:d3_iterative} and \ref{fig:d4_iterative}, where $(\mathbf x^{\rm out}_1,\ldots,\mathbf x^{\rm out}_d)$ is the output of AM. Here the CPU time counts both that of approximation algorithms and  AM. Fig. \ref{fig:d3d4_iterative_iterations} depicts the number of iterations of AM initialized by different strategies.  
Alg. \ref{proc:init3_order_d} + AM is in blue, \ref{proc:init4_order_d} + AM is in red, and Random + AM is in black. 

\begin{figure}[h] 
\centering
\subfigure[$d=3$. Left: $\mathcal A\mathbf x^{\rm out}_1\cdots\mathbf x^{\rm out}_d$ versus $n$; right:   CPU time versus $n$.]{
	\label{fig:d3_iterative_l1} 
	\includegraphics[height=2.5cm,width=13cm]{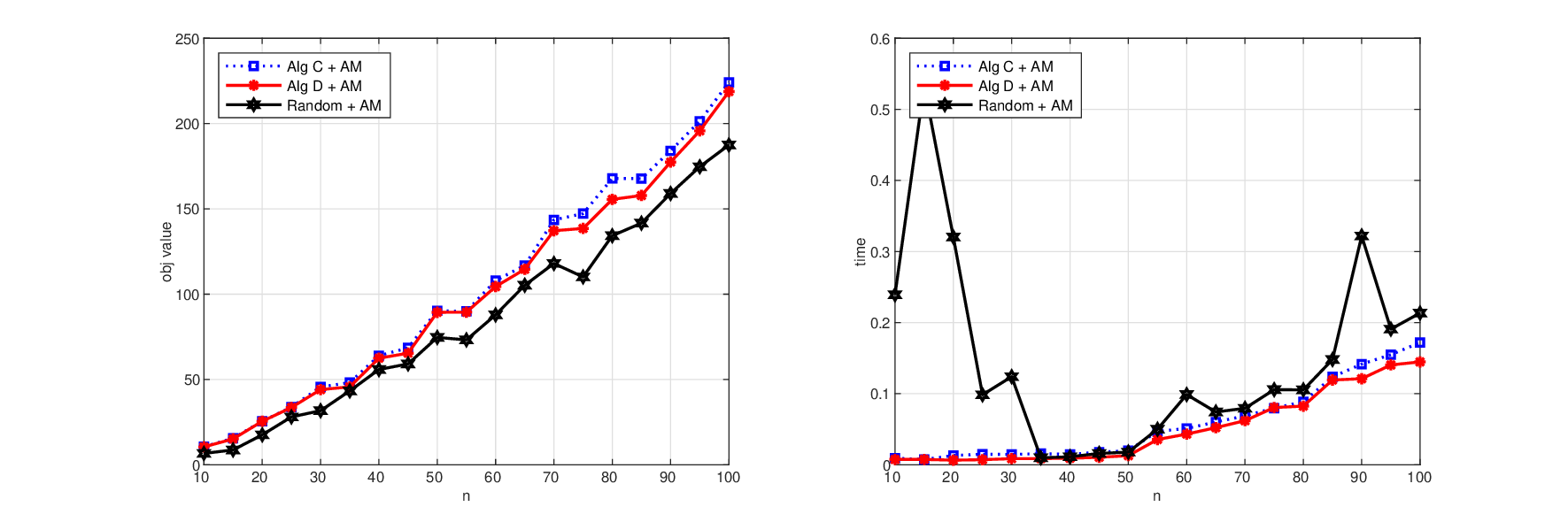}}\\
\subfigure[$d=4$. Left: $\mathcal A\mathbf x^{\rm out}_1\cdots\mathbf x^{\rm out}_d$ versus $n$; right:   CPU time versus $n$.]{
	\label{fig:d4_iterative_l1}
	\includegraphics[height=2.5cm,width=13cm]{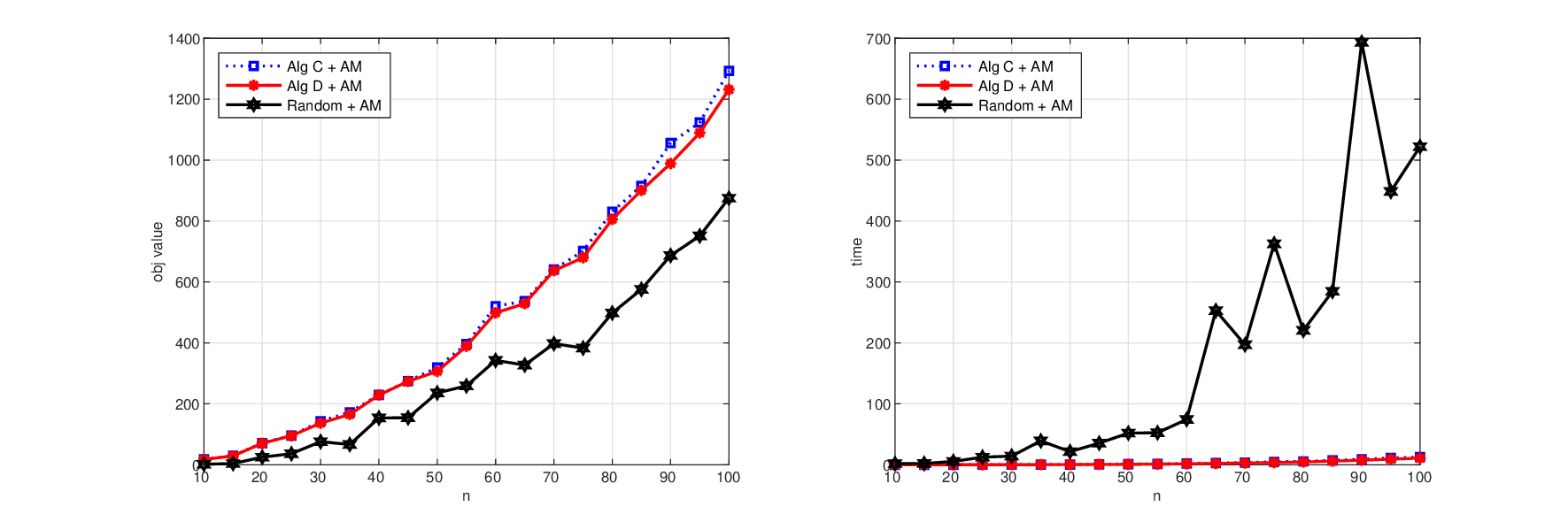}}\\
\subfigure[Number of iterations versus $n$. Left: $d=3$; right: $d=4$.]{
	\label{fig:d3d4_iterative_iterations_l1}
	\includegraphics[height=2.5cm,width=13cm]{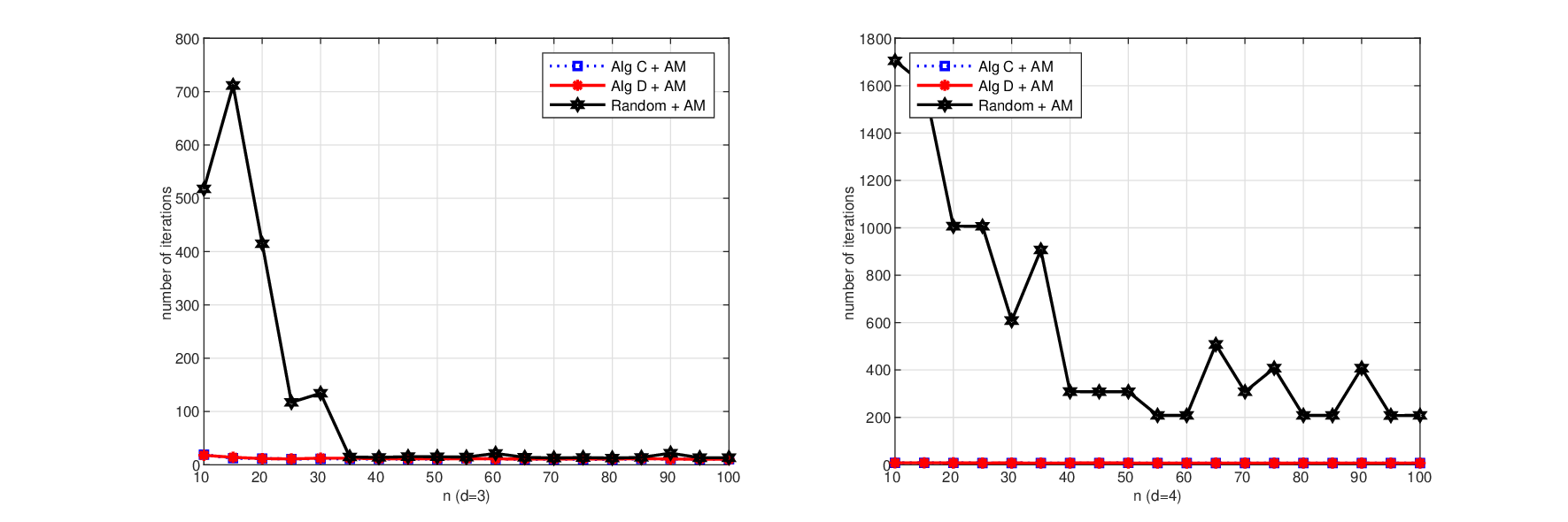}}\\
\caption{Performances of  Alg. \ref{proc:init3_order_d} + AM ($\ell_1$), Alg. \ref{proc:init4_order_d} + AM ($\ell_1$) and Random + AM ($\ell_1$) for solving \eqref{prob:l1} where $\mathcal A$ is given by \eqref{exp:tr1a}. $n$ varies from $10$ to $100$. } 	
\label{fig:tr1a_iterative_l1} 
\end{figure}

In terms of objective value, we see from Fig. \ref{fig:d3_iterative} and \ref{fig:d4_iterative} that Alg. \ref{proc:init3_order_d} + AM performs the best, while Alg. \ref{proc:init4_order_d} + AM is slightly worse. Both of them are better than Random + AM, demonstrating that     approximation algorithms can indeed help in improving the solution quality of iterative algorithms. Considering the efficiency, Alg. \ref{proc:init4_order_d} + AM is the best among the three, followed by Random + AM. This further demonstrates the advantage of approximation algorithms. In fact, from Fig. \ref{fig:d3d4_iterative_iterations}, we can see that both Alg. \ref{proc:init3_order_d} and \ref{proc:init4_order_d} can help in reducing the number of iterations of AM. However, as we have observed in the previous part, Alg. \ref{proc:init3_order_d} is more time-consuming than Alg. \ref{proc:init4_order_d}, leading to that Alg. \ref{proc:init3_order_d} + AM is the slowest one.

We also try to use Algorithms \ref{proc:init3_order_d} and \ref{proc:init4_order_d} to initialize AM for $\ell_1$ regularized model \cite{allen2012sparse}:
\begin{equation}\label{prob:l1}
\max~\innerprod{\mathcal A}{\mathbf x_1\circ\cdots\circ \mathbf x_d} - \sum^d_{j=1}\nolimits \rho_j\|\mathbf x_j\|_1~~{\rm s.t.}~~\normalnorm{\mathbf x_j} \leq 1,~1\leq j\leq d, 
\end{equation}
where $\rho_j=0.2$ is set in the experiment. The algorithms are denoted as Alg. \ref{proc:init3_order_d} + AM ($\ell_1$), Alg. \ref{proc:init4_order_d} + AM ($\ell_1$)  and Random + AM ($\ell_1$). The results are plotted in Fig. \ref{fig:tr1a_iterative_l1}, from which we observe similar results as those of Fig. \ref{fig:tr1a_iterative}; in particular, the efficiency of approximation algorithms plus AM ($\ell_1$) is significantly better than Random + AM  ($\ell_1$), and their number of iterations is much more stable.

Overall, based on the observations of this part, compared with random initializations, iterative algorithms initialized by approximation solutions generated by approximation algorithms is superior both in terms of 
the solution quality and the running time.

\paragraph{Sparse   tensor clustering}  Tensor clustering  aims to cluster   matrix or  tensor samples into their underlying groups: for $N$
samples $\mathcal A_i \in \T,i = 1,\dots,N$ with $d\geq 2$ and  given clusters $K\geq 2$, a clustering $\psi (\cdot)$ is defined as a mapping $\psi:\mathbb R^{n_1\times \dots \times n_d}\rightarrow \{1,\dots,K\}$. Several tensor methods have been proposed for tensor clustering; see, e.g., \cite{papalexakis2012k,sun2017provable,sun2019dynamic}. Usually, one can first perform a dimension reduction to the   samples by means of (sparse) tensor decomposition, and then use classic methods such as $K$-means to the reduced samples for clustering. Here, we use a deflation method for tensor decomposition, including the proposed approximation algorithms as initialization procedures for AM as subroutines. 
The whole method for tensor clustering is presented in Algorithm  \ref{alg:sparse_tensor_clustering}, which has some similarities to  \cite[Algorithm 2]{sun2019dynamic}. We respectively use \ref{alg:sparse_tensor_clustering} (\ref{proc:init1_order_d}), \ref{alg:sparse_tensor_clustering} (\ref{proc:init2_order_d}), \ref{alg:sparse_tensor_clustering} (\ref{proc:init3_order_d}), \ref{alg:sparse_tensor_clustering} (\ref{proc:init4_order_d}) to distinguish   AM involved in \ref{alg:sparse_tensor_clustering} initialized by different approximation algorithms. We also denote \ref{alg:sparse_tensor_clustering} (Random) as   AM involved in \ref{alg:sparse_tensor_clustering} with random initializations.

\begin{boxedminipage}{0.97\textwidth}\small
\begin{equation}  \label{alg:sparse_tensor_clustering} \tag{STC}
	\noindent {\rm Algorithm}~ (\psi(\mathcal A_1),\ldots,\psi(\mathcal A_N)) =  {\rm sparse\_clustering}(\mathcal A_1,\dots,\mathcal A_N)
\end{equation}

1. Define $\mathcal T\in R^{n_1\times \cdots \times n_d\times N}$ with $\mathcal T(:,\ldots,:,i) = \mathcal A_i$, $i=1,\ldots,N$.

2. Apply a   deflation method with rank-$R$ to $\mathcal T$, which employs Algorithm  \ref{proc:init1_order_d},  \ref{proc:init2_order_d}, \ref{proc:init3_order_d},  or  \ref{proc:init4_order_d})+ AM    to   the residual tensors  $\mathcal T,\ldots, \mathcal T- \sum^m_{l=1}\alpha_l\mathbf x^l_1\circ\cdots\circ\mathbf x^l_{d+1},\ldots$, with resulting rank-1 terms $\mathbf x^m_1\circ\cdots\circ\mathbf x^m_{d+1}$ and weights $\alpha_m = (\mathcal T - \sum^{m-1}_{  l=1} \alpha_{  l} \mathbf x^{  l}_1\cdots\mathbf x^{  l}_{d+1} )\mathbf x_1^m\cdots\mathbf x^m_{d+1}$,  $m=1,\ldots,R$.    
Write $X_{j} := [\mathbf x_{j}^1,\ldots,\mathbf x_{j}^R]   $, $j = 1,\dots,d+1$ and    $\boldsymbol{\alpha} := [ \alpha_1,\dots,\alpha_R]^\top$.

3. Denote $\hat{X}_{d+1} = [ \alpha_1\mathbf x_{d+1}^1,\dots,\alpha_R\mathbf x_{d+1 }^R]\in \mathbb R^{N\times R}$; here the $i$-th row of $\hat X_{d+1}$, denoted as $\hat{\mathbf u}_i$, is regarded as the reduced sample of $\mathcal A_i$. Apply   $K$-means to clustering  $\{\hat{\mathbf{u}}_1,\dots,\hat{\mathbf{u}}_N\}\rightarrow\hat{\psi}(\hat{\mathbf{u}}_j),j = 1,\dots,N$.

4. Return																																																																																																								 the cluster assignment of $\{\mathcal{A}_1,\dots,\mathcal{A}_N\}$: $(\psi(\mathcal A_1),\ldots,\psi(\mathcal A_N)) = (\hat{\psi}(\hat{\mathbf{u}}_1),\ldots,\hat{\psi}(\hat{\mathbf{u}}_N))$.

\end{boxedminipage}

%

Denote $\psi_0$ as the true clustering mapping and let $|S|$ represent the cardinality of a set $S$. The following metric is commonly used to measure clustering performance \cite{wang2010consistent}:
%
\[
{\rm cluster~ err.} :=  \tbinom{N}{2}^{-1}|\{i,j\}:(\psi(\mathcal A_i)=\psi(\mathcal A_j))\not=(\psi_0(\mathcal A_i)=\psi_0 (\mathcal A_j)),i<j|.
\]

\paragraph{Synthetic Data}
This experiment is similar to  \cite[Sect. 5.2]{sun2019dynamic}. 
Consider $N$ sparse matrix samples $\{A_i\in \mathbb{R}^{n_1\times n_2},i = 1,\dots,N\}$ with $n_1=n_2=20$ as follows:

\begin{eqnarray*} 
&&A_1 = \dots = A_{\frac{N}{4}} ~= \mu^3\cdot \begin{bmatrix}
	\Sigma  &~&~ \\ 
	~& -\Sigma  & ~\\ 
	~& ~ & \mathbf 0
\end{bmatrix}, A_{\frac{N}{4}+1} = \dots = A_{\frac{N}{2}} ~= \mu^3\cdot \begin{bmatrix}
	\Sigma  &  ~&~ \\ 
	~& \Sigma  & ~\\ 
	~&~  & \mathbf 0
\end{bmatrix}\\
&&A_{\frac{N}{2}+1} = \dots = A_{\frac{3N}{4}} \hspace{0.3mm}= \mu^3\cdot \begin{bmatrix}
	-\Sigma  &  & \\ 
	& \Sigma  & \\ 
	&  & \mathbf 0
\end{bmatrix}, A_{\frac{3N}{4}+1} = \dots = A_{N} \hspace{0.3mm}= \mu^3\cdot \begin{bmatrix}
	-\Sigma  &  & \\ 
	& -\Sigma  & \\ 
	&  & \mathbf 0
\end{bmatrix},
\end{eqnarray*}	 
where $\Sigma=\mathbf v\mathbf v^{\top} \in\mathbb R^{4\times 4}$ with 
$\mathbf v = [1,-1,0.5,-0.5]^{\top}$,
and $\mathbf 0$ is a zero matrix of size $12\times 12$. 
We 
stack these samples into a  third order tensor $\mathcal{T}\in \mathbb{R}^{n_1\times n_2 \times N}$ with $\mathcal T(:,:,i)=A_i$.
We apply   \ref{alg:sparse_tensor_clustering} (\ref{proc:init1_order_d}), \ref{alg:sparse_tensor_clustering} (\ref{proc:init2_order_d}), \ref{alg:sparse_tensor_clustering} (\ref{proc:init3_order_d}), \ref{alg:sparse_tensor_clustering} (\ref{proc:init4_order_d}),   and the vanilla $K$-means to the   tensor $\mathcal T^* = \frac{\mathcal T}{||\mathcal T||_F} + \sigma\frac{\mathcal E}{||\mathcal E||_F}$    where $\mathcal  E$ is a noisy tensor and $\sigma$ is the noise level. Here vanilla $K$-means stands for directly applying $K$-means to the vectorizations of $A_i$'s   without tensor methods. 
We vary the sample number $N = \{20,40\}$ and noise level $\sigma = \{0.1,0.5,0.9\}$, and set $\mu = 0.5$. 
To select parameters, we apply a similar Bayesian Information Criterion as \cite{allen2012sparse}: For a set of parameter combinations  defined as the cartesian product of the set $ \mathfrak{R}$ of given ranks and the sets $ \mathfrak{r}$ of tuning parameters, namely, $ \mathfrak{R}\times\mathfrak{r} = \{(R,\boldsymbol{r})|R\in \mathfrak{R},\boldsymbol{r}\in \mathfrak{r}\}$, we select  $(  R^*,  {\boldsymbol{r}^*})=\arg\min_{\mathfrak{R}\times\mathfrak{r}} BIC(R,\boldsymbol{r})$ with 
\begin{equation}\label{BIC} \small
BIC(R,\boldsymbol{r}) := \log\left(\frac{\|\mathcal{T} - \sum_{l=1}^{R}\alpha_l\mathbf x^l_{1 }\circ\dots\circ\mathbf x^l_{d+1}\|^2_F}{n_1 \dots n_{d  }N}\right)+\frac{\log(\prod^d_{j=1}n_j N  )}{\prod^d_{j=1}n_j N} \sum_{l = 1}^{R}\nolimits\sum^{d+1}_{j=1}\nolimits\|\mathbf x^l_{j}
\|_0 ,
\end{equation}
where $ {\alpha}_l$ and $\mathbf x^l_j$  are computed by Algorithm \ref{alg:sparse_tensor_clustering}.

For tuning procedure, we set $ \mathfrak{R} = \{4,6\}$ and $\mathfrak{r}= \{(7,7,N),(8,8,N)\}$. 
For  the number $K$ of clusters, we employ the widely used \texttt{evalclusters} function in Matlab which evaluate each proposed number of clusters in a set $\{K_1,\cdots,K_m\}$ and select the smallest number of clusters satisfying
$Gap(K)\geq Gap(K+1) - SE(K+1),$
where $Gap(K)$ is the gap value for the clustering solution with $K$ clusters, and $SE(K + 1)$ is the standard error of the clustering solution with $K + 1$ clusters. The results are reported in Table \ref{tab:2} averaged over 50 instances in each case. 
\begin{table}[htbp]
\renewcommand{\arraystretch}{2}
\setlength{\tabcolsep}{2pt}
\centering
\caption{Sparse tensor clustering via \ref{alg:sparse_tensor_clustering} (\ref{proc:init1_order_d}), \ref{alg:sparse_tensor_clustering} (\ref{proc:init2_order_d}), \ref{alg:sparse_tensor_clustering} (\ref{proc:init3_order_d}), \ref{alg:sparse_tensor_clustering} (\ref{proc:init4_order_d})  on synthetic data with different size $N$ and noise level $\sigma$. ``cluster err.'' represents the cluster error; ``time'' denotes the computational time (seconds). 
}
\begin{mytabular} {cccccccccccc}       
	\toprule
	& & \multicolumn{2}{c}{\ref{alg:sparse_tensor_clustering} (\ref{proc:init1_order_d})} & \multicolumn{2}{c}{\ref{alg:sparse_tensor_clustering} (\ref{proc:init2_order_d})} & \multicolumn{2}{c}{\ref{alg:sparse_tensor_clustering} (\ref{proc:init3_order_d})} &
	\multicolumn{2}{c}{\ref{alg:sparse_tensor_clustering} (\ref{proc:init4_order_d})} & \multicolumn{2}{c}{vanilla $K$-means}\\
	\cmidrule(r){3-4} \cmidrule(r){5-6} \cmidrule(r){7-8} \cmidrule(r){9-10} \cmidrule(r){11-12} 
	$N$ &   $\sigma$ & cluster err.  & time & cluster err. & time & cluster err. & time & cluster err. & time & cluster err. & time\\
	\toprule 
	20    & 0.1   & \bf{0.00E+00} & 2.72E-01 & \bf{0.00E+00} & 5.08E-01 & \bf{0.00E+00} & 2.41E-01 & \bf{0.00E+00} & 2.08E-01 & 9.37E-03 & 2.10E-03 \\
	20    & 0.5   & 3.05E-03 & 3.03E-01 & 6.53E-03 & 5.47E-01 & \bf{0.00E+00} & 2.83E-01 & \bf{0.00E+00} & 2.27E-01 & 2.53E-02 & 1.82E-03 \\
	20    & 0.9   & \bf{1.22E-02} & 3.72E-01 & 6.53E-03 & 5.17E-01 & \bf{1.22E-02} & 2.83E-01 & 3.05E-03 & 2.17E-01 & 5.21E-02 & 1.82E-03 \\
	40    & 0.1   & \bf{0.00E+00} & 3.98E-01 & \bf{0.00E+00} & 7.10E-01 & \bf{0.00E+00} & 4.26E-01 & \bf{0.00E+00} & 5.36E-01 & 3.10E-03 & 2.35E-03 \\
	40    & 0.5   & 3.18E-03 & 4.26E-01 & 6.28E-03 & 6.43E-01 & \bf{0.00E+00} & 3.85E-01 & 2.97E-03 & 3.54E-01 & 2.26E-02 & 2.39E-03 \\
	40    & 0.9   & 3.21E-03 & 3.77E-01 & 9.08E-03 & 6.69E-01 & \bf{3.10E-03} & 3.57E-01 & 3.18E-03 & 3.13E-01 & 5.33E-02 & 2.50E-03 \\
	\bottomrule
\end{mytabular}
\label{tab:2}
\end{table}

Table \ref{tab:2} shows that the cluster error of \ref{alg:sparse_tensor_clustering} with any approximation algorithm is smaller than that of the vanilla $K$-means in all cases and  is even zero in some case when the noise level is not high,  where the best one is \ref{alg:sparse_tensor_clustering} (\ref{proc:init3_order_d}), followed by \ref{alg:sparse_tensor_clustering} (\ref{proc:init4_order_d}).   This shows the accuracy and robustness of our method. Considering the CPU time, among the four tensor methods, \ref{alg:sparse_tensor_clustering} (\ref{proc:init4_order_d}) is the most efficient one, followed by \ref{alg:sparse_tensor_clustering} (\ref{proc:init3_order_d}), while \ref{alg:sparse_tensor_clustering} (\ref{proc:init2_order_d}) needs more time than the other three. 
However, the computational time  of tensor methods is not as good as that of the vectorized $K$-means, which is because of the increasing cost of the tuning procedure via  \eqref{BIC} with a pre-specified set of parameter combinations.
\paragraph{Real data}
We test the clustering performance of \ref{alg:sparse_tensor_clustering} (\ref{proc:init4_order_d}), \ref{alg:sparse_tensor_clustering} (Random), and the vanilla $K$-means 
on Columbia Object Image Library COIL-20 \cite{nene1996columbia} for image clustering. The   data contains 20 objects viewed from varying angles and each image is of size $128\times 128$. The images are in grayscale, and the background of the objects is black, resulting in that the images can be seen as sparse matrices.  In this experiment, we consider $K = \{5, 6, 7, 8, 9, 10\}$ objects and pick up 36 images from each object for clustering, giving $\mathcal T\in \mathbb R^{128\times 128\times 36K}$. 
We still tune  parameters of Algorithm \ref{alg:sparse_tensor_clustering} via  \eqref{BIC} and the  \texttt{evalclusters}  function in Matlab, and set $ \mathfrak{R} = \{40,45\}$ and $\mathfrak{r}= \{(75,75,36K),(80,80,36K)\}$.
The experiment has been repeated 20 times for each $K$, and the averaged results are shown in Table \ref{tab:3}.

\begin{table}[htbp]
\renewcommand{\arraystretch}{2}
\setlength{\tabcolsep}{10pt}
\centering
\caption{Sparse tensor clustering via \ref{alg:sparse_tensor_clustering} (\ref{proc:init4_order_d}), \ref{alg:sparse_tensor_clustering} (Random)  and  vanilla $K$-means on COIL-20 with different   $K$. 
}
\begin{mytabular} {ccccccc}       
	\toprule
	&  \multicolumn{2}{c}{\ref{alg:sparse_tensor_clustering} (\ref{proc:init4_order_d})} & \multicolumn{2}{c}{\ref{alg:sparse_tensor_clustering} (Random)} &  \multicolumn{2}{c}{vanilla $K$-means}\\
	\cmidrule(r){2-3} \cmidrule(r){4-5} \cmidrule(r){6-7} 
	$K$ & cluster err.  & time  & cluster err. & time.& cluster err. & time \\
	\toprule 
	5     & \bf{1.36E-01} & 1.62E+02 & 1.37E-01 & 1.70E+02 & 1.46E-01 & 2.21E-01 \\
	6     & \bf{1.45E-01} & 1.90E+02 & 1.50E-01 & 2.02E+02 & 1.51E-01 & 2.79E-01 \\
	7     & \bf{1.47E-01} & 2.23E+02 & 1.49E-01 & 2.46E+02 & 1.56E-01 & 3.53E-01 \\
	8     & \bf{1.48E-01} & 2.63E+02 & 1.49E-01 & 2.84E+02 & 1.55E-01 & 3.91E-01 \\
	9     & \bf{1.48E-01} & 2.93E+02 & 1.49E-01 & 2.97E+02 & 1.57E-01 & 4.40E-01 \\
	10    & 1.13E-01 & 3.09E+02 & \bf{1.11E-01} & 3.43E+02 & 1.24E-01 & 5.39E-01 \\

	\bottomrule
\end{mytabular}
\label{tab:3}
\end{table}

Table \ref{tab:3} shows that the   cluster error   of tensor methods is still better than that of the vanilla $K$-means, while \ref{alg:sparse_tensor_clustering} \eqref{proc:init4_order_d} is slightly better than \ref{alg:sparse_tensor_clustering} (Random). On the other hand, the computational time of \ref{alg:sparse_tensor_clustering} \eqref{proc:init4_order_d} is less than that of \ref{alg:sparse_tensor_clustering} (Random). However, it still needs more time   than that of the vanilla $K$-means. There are two possible reasons for this phenomenon: first, it   takes more time to tune  parameters via \eqref{BIC}; and second, our approach deals directly with data tensors  instead of reshaping them into vectors, which preserves their intrinsic structures but naturally increases the computational complexity at the same time. 

Overall, tensor methods armed with an   approximation algorithm to generate good initializers show its  ability to better exploring the data structure for clustering, and it would be interesting to further improve the efficiency. 

\section{Conclusions}\label{sec:conclusions}
Sparse tensor BR1Approx problem can be seen as a sparse generalization of the dense tensor BR1Approx problem and a higher-order extension of the matrix BR1Approx problem. In the literature, little attention was paid to approximation algorithms for sparse tensor BR1Approx. To fill this gap, four  approximation algorithms were developed in this work, which are of low computational complexity, easily implemented, and all admit theoretical guaranteed 
approximation bounds. Some of the proposed algorithms and the associated aproximation bounds generalize their matrix or dense tensor counterparts, while Algorithm \ref{proc:init4_order_d}, which is the most efficient one, is even new when reducing to the matrix or dense tensor cases. Numerical experiments on synthetic as well as real data showed the effectiveness and efficiency of the developed algorithms; in particular, we observed that compared with random initializations, our algorithms can improve the performance of  iterative algorithms for solving the problem in question. Possible future work is to design algorithms with better approximation ratio; following \cite[Theorem 4.3]{he2012probability}, we conjecture that the best ratio  might be $O(\sqrt{\prod^{d}_{j=1}\frac{r_j}{n_j}} \cdot  \sqrt{\prod^{d-2}_{j=1} \frac{\ln n_j  }{ n_j} }   )$.  On the other hand, Qi defined and studied the best rank-1 approximation ratio of a tensor space \cite{qi2011best}; it would be interesting to extend this notion to the sparse best rank-1 approximation setting and study its bounds.

 {\scriptsize\section*{Acknowledgement}  This work was supported by the National Natural  Science Foundation of China  Grant 11801100, the    Fok Ying Tong Education Foundation Grant 171094, and the  Innovation Project of Guangxi Graduate Education Grant YCSW2020055.}

\bibliographystyle{plain}
\bibliography{../../tensor,../../robust,../../sparse_pca}

\begin{thebibliography}{10}

\bibitem{allen2012sparse}
G.~I. Allen.
\newblock {Sparse higher-order principal components analysis}.
\newblock In {\em International Conference on Machine Learning}, pages 27--36,
  April 2012.

\bibitem{chan2016approximability}
S.~O. Chan, D.~Papailliopoulos, and A.~Rubinstein.
\newblock On the approximability of sparse {PCA}.
\newblock In {\em Conference on Learning Theory}, pages 623--646, 2016.

\bibitem{chi2012tensors}
E.~C. Chi and T.~G. Kolda.
\newblock On tensors, sparsity, and nonnegative factorizations.
\newblock {\em SIAM J. Matrix Anal. Appl.}, 33(4):1272--1299, 2012.

\bibitem{cichocki2014era}
A.~Cichocki.
\newblock Era of big data processing: A new approach via tensor networks and
  tensor decompositions.
\newblock {\em arXiv preprint arXiv:1403.2048}, 2014.

\bibitem{cichocki2015tensor}
A.~Cichocki, D.~Mandic, L.~De~Lathauwer, G.~Zhou, Q.~Zhao, C.~Caiafa, and H.~A.
  Phan.
\newblock Tensor decompositions for signal processing applications: From
  two-way to multiway component analysis.
\newblock {\em IEEE Signal Process. Mag.}, 32(2):145--163, 2015.

\bibitem{comon2014tensors}
P.~Comon.
\newblock Tensors: a brief introduction.
\newblock {\em IEEE Signal Process. Mag.}, 31(3):44--53, 2014.

\bibitem{comon1990tracking}
P.~Comon and G.~H. Golub.
\newblock Tracking a few extreme singular values and vectors in signal
  processing.
\newblock {\em Proc. IEEE}, 78(8):1327--1343, 1990.

\bibitem{comon2009tensor}
P.~Comon, X.~Luciani, and A.~De~Almeida.
\newblock Tensor decompositions, alternating least squares and other tales.
\newblock {\em J. Chemometr.}, 23(7-8):393--405, 2009.

\bibitem{aspremont2007a}
A.~d'Aspremont, L.~El~Ghaoui, M.~I. Jordan, and G.~R.~G. Lanckriet.
\newblock {A direct formulation for sparse {PCA} using semidefinite
  programming}.
\newblock {\em {SIAM} Rev.}, 49(3):434--448, January 2007.

\bibitem{de2000on}
L.~De~Lathauwer, B.~De~Moor, and J.~Vandewalle.
\newblock On the best rank-$1$ and rank-(${R}_1,{R}_2,\ldots,{R}_n$)
  approximation of higer-order tensors.
\newblock {\em SIAM J. Matrix Anal. Appl.}, 21:1324--1342, 2000.

\bibitem{d2014approximation}
A.~d’Aspremont, F.~Bach, and L.~El~Ghaoui.
\newblock Approximation bounds for sparse principal component analysis.
\newblock {\em Math. Program.}, 148(1-2):89--110, 2014.

\bibitem{he2012probability}
S.~He, B.~Jiang, Z.~Li, and S.~Zhang.
\newblock Probability bounds for polynomial functions in random variables.
\newblock {\em Math. Oper. Res.}, 39(3):889--907, 2014.

\bibitem{he2010approximation}
S.~He, Z.~Li, and S.~Zhang.
\newblock Approximation algorithms for homogeneous polynomial optimization with
  quadratic constraints.
\newblock {\em Math. Program., Ser. B}, 125:353--383, 2010.

\bibitem{hillar2013most}
C.~J. Hillar and L.-H. Lim.
\newblock Most tensor problems are {NP}-hard.
\newblock {\em J. ACM}, 60(6):45:1--45:39, 2013.

\bibitem{jiang2015tensor}
B.~Jiang, S.~Ma, and S.~Zhang.
\newblock Tensor principal component analysis via convex optimization.
\newblock {\em Math. Program., Ser. A}, 150:423--457, 2015.

\bibitem{kolda2010tensor}
T.~G. Kolda and B.~W. Bader.
\newblock Tensor decompositions and applications.
\newblock {\em SIAM {R}ev.}, 51:455--500, 2009.

\bibitem{kolda2005higher}
T.~G. Kolda, B.~W. Bader, and J.~P. Kenny.
\newblock Higher-order web link analysis using multilinear algebra.
\newblock In {\em Data Mining, Fifth IEEE International Conference on}, pages
  242--249. IEEE, 2005.

\bibitem{kolda2011shifted}
T.~G. Kolda and J.~R. Mayo.
\newblock Shifted power method for computing tensor eigenpairs.
\newblock {\em SIAM J. Matrix Anal. Appl.}, 32(4):1095--1124, 2011.

\bibitem{lim2005singular}
L.-H. Lim.
\newblock Singular values and eigenvalues of tensors: a variational approach.
\newblock In {\em Computational Advances in Multi-Sensor Adaptive Processing,
  2005 1st IEEE International Workshop on}, volume~1, pages 129--132, 2005.

\bibitem{luss2013conditional}
R.~Luss and M.~Teboulle.
\newblock Conditional gradient algorithms for rank-one matrix approximations
  with a sparsity constraint.
\newblock {\em SIAM Rev.}, 55(1):65--98, 2013.

\bibitem{madrid2017tensor}
O.~H. Madrid-Padilla and J.~Scott.
\newblock Tensor decomposition with generalized lasso penalties.
\newblock {\em J. Comput. Graph. Stat.}, 26(3):537--546, 2017.

\bibitem{magdon2017np}
M.~Magdon-Ismail.
\newblock {NP}-hardness and inapproximability of sparse {PCA}.
\newblock {\em Infor. Process. Lett.}, 126:35--38, 2017.

\bibitem{nene1996columbia}
S.~A. Nene, S.~K. Nayar, and H.~Murase.
\newblock Columbia object image library ({COIL}-20).
\newblock {\em Technical Report CUCS-005-96}, 1996.

\bibitem{ng2011multirank}
M.~K.-P. Ng, X.~Li, and Y.~Ye.
\newblock Multirank: co-ranking for objects and relations in multi-relational
  data.
\newblock In {\em Proceedings of the 17th ACM SIGKDD international conference
  on Knowledge discovery and data mining}, pages 1217--1225, 2011.

\bibitem{nie2014semidefinite}
J.~Nie and L.~Wang.
\newblock Semidefinite relaxations for best rank-1 tensor approximations.
\newblock {\em SIAM J. Matrix Anal. Appl.}, 35(3):1155--1179, 2014.

\bibitem{papalexakis2012k}
E.~E. Papalexakis, N.~D. Sidiropoulos, and R.~Bro.
\newblock From {K}-means to higher-way co-clustering: Multilinear decomposition
  with sparse latent factors.
\newblock {\em IEEE Trans. Signal Process.}, 61(2):493--506, 2012.

\bibitem{qi2011best}
L.~Qi.
\newblock The best rank-one approximation ratio of a tensor space.
\newblock {\em SIAM J. Matrix Anal. Appl.}, 32(2):430--442, 2011.

\bibitem{qi2018tensor}
L.~Qi, H.~Chen, and Y.~Chen.
\newblock {\em Tensor eigenvalues and their applications}, volume~39.
\newblock Springer, 2018.

\bibitem{sidiropoulos2017tensor}
N.~D. Sidiropoulos, L.~De~Lathauwer, X.~Fu, K.~Huang, E.~E. Papalexakis, and
  C.~Faloutsos.
\newblock Tensor decomposition for signal processing and machine learning.
\newblock {\em IEEE Trans. Signal Process.}, 65(13):3551--3582, 2017.

\bibitem{sun2017store}
W.~W. Sun and L.~Li.
\newblock {STORE}: sparse tensor response regression and neuroimaging analysis.
\newblock {\em J. Mach. Learn. Res.}, 18(1):4908--4944, 2017.

\bibitem{sun2019dynamic}
W.~W. Sun and L.~Li.
\newblock Dynamic tensor clustering.
\newblock {\em J. Am. Stat. Assoc.}, 114(528):1894--1907, 2019.

\bibitem{sun2017provable}
W.~W. Sun, J.~Lu, H.~Liu, and G.~Cheng.
\newblock {Provable sparse tensor decomposition}.
\newblock {\em J. Roy. Statist. Soc. Ser. B}, 79(3):899--916, 2017.

\bibitem{tensorlab2013}
N.~Vervliet, O.~Debals, L.~Sorber, M.~Van~Barel, and L.~De~Lathauwer.
\newblock Tensorlab 3.0, Mar. 2016.
\newblock Available online.

\bibitem{wang2010consistent}
J.~Wang.
\newblock Consistent selection of the number of clusters via crossvalidation.
\newblock {\em Biometrika}, 97(4):893--904, 2010.

\bibitem{wang2020sparse}
Y.~Wang, M.~Dong, and Y.~Xu.
\newblock A sparse rank-1 approximation algorithm for high-order tensors.
\newblock {\em Applied Math. Lett.}, 102:106140, 2020.

\bibitem{witten2009a}
D.~M. Witten, R.~Tibshirani, and T.~Hastie.
\newblock {A penalized matrix decomposition, with applications to sparse
  principal components and canonical correlation analysis}.
\newblock {\em Biostat.}, 10(3):515--534, 2009.

\bibitem{yang2016rank}
Y.~Yang, Y.~Feng, X.~Huang, and J.~A.~K. Suykens.
\newblock Rank-1 tensor properties with applications to a class of tensor
  optimization problems.
\newblock {\em SIAM J. Optim.}, 26(1):171--196, 2016.

\bibitem{zhang2019optimal}
A.~Zhang and R.~Han.
\newblock Optimal sparse singular value decomposition for high-dimensional
  high-order data.
\newblock {\em J. Am. Stat. Assoc.}, 114(528):1708--1725, 2019.

\bibitem{zhang2012cubic}
X.~Zhang, L.~Qi, and Y.~Ye.
\newblock The cubic spherical optimization problems.
\newblock {\em Math. Comput.}, 81(279):1513--1525, 2012.

\bibitem{zou2006sparse}
H.~Zou, T.~Hastie, and R.~Tibshirani.
\newblock {Sparse principal component analysis}.
\newblock {\em J. Comput. Graph. Statist.}, 15(2):265--286, June 2006.

\end{thebibliography}

\end{document}